\documentclass{amsart}

\usepackage{amssymb,latexsym,mathrsfs,amsmath,tikz,tikz-cd}
\usepackage{amsthm}
\usepackage{graphicx}
\usepackage{pstricks}
\usepackage{graphics}
\usepackage{psfrag}
\usepackage{pstool}
\usepackage{tikz}
\usepackage{caption}
\usepackage{subcaption}
\usepackage{etex}
\usepackage[all,2cell]{xy}
\SelectTips{eu}{12}

\newcommand{\cell}{\mathcal{C}}

\newcommand{\Kh}{\mathrm{Kh}}

\newcommand{\wt}{\widetilde}
\newcommand{\Z}{\mathbb{Z}}

\newcommand{\bd}{\mathbf{d}}

\newcommand{\cC}{\mathscr{C}}
\newcommand{\cS}{\mathscr{S}}
\newcommand{\cL}{\mathscr{L}}

\newcommand{\cw}{\mathcal{C}}
\newcommand{\M}{\mathcal{M}}
\newcommand{\X}{\mathcal{X}}

\newcommand{\R}{\mathbb{R}}

\newcommand{\E}{\mathbb{E}}

\newcommand{\CP}{\mathbb{C}\mathbf{P}}

\newcommand{\SO}{\mathrm{SO}}

\newcommand{\SL}{\mathfrak{sl}}

\newcommand{\Cat}{\mathscr{C}}
\newcommand{\gr}[1]{|{#1}|}
\newcommand{\id}{\mathrm{id}}
\newcommand{\im}{\mathrm{im}}

\newcommand{\Hom}{\mathrm{Hom}}

\newcommand{\RP}{\mathbb{R}\mathbf{P}}
\newcommand{\PP}{{}^+_+}
\newcommand{\PM}{{}^-_+}
\newcommand{\MP}{{}^+_-}
\newcommand{\MM}{{}^-_-}

\DeclareMathOperator{\Sq}{Sq}

\DeclareMathOperator{\Ob}{Ob}

\newtheorem{lemma}{Lemma}[section]

\newtheorem{proposition}[lemma]{Proposition}
\newtheorem{theorem}[lemma]{Theorem}

\theoremstyle{definition}
\newtheorem{remark}[lemma]{Remark}
\newtheorem{definition}[lemma]{Definition}
\newtheorem{example}[lemma]{Example}

\begin{document}
\parindent0em
\setlength\parskip{.1cm}

\title[Morse moves in flow categories]{Morse moves in flow categories}

\author[Dan Jones]{Dan Jones}
\address{Department of Mathematical Sciences\\ Durham University}
\email{daniel.jones@durham.ac.uk}

\author[Andrew Lobb]{Andrew Lobb}
\address{Department of Mathematical Sciences\\ Durham University}
\email{andrew.lobb@durham.ac.uk}

\author[Dirk Sch\"utz]{Dirk Sch\"utz}
\address{Department of Mathematical Sciences\\ Durham University}
\email{dirk.schuetz@durham.ac.uk}

\thanks{AL and DS were both supported by EPSRC grant EP/M000389/1, DJ was supported by an EPSRC graduate studentship.}

\begin{abstract}
We pursue the analogy of a framed flow category with the flow data of a Morse function.  In classical Morse theory, Morse functions can sometimes be locally altered and simplified by the Morse moves.  These moves include the \emph{Whitney trick} which removes two oppositely framed flowlines between critical points of adjacent index and \emph{handle cancellation} which removes two critical points connected by a single flowline.

A \emph{framed flow category} is a way of encoding flow data such as that which may arise from the flowlines of a Morse function or of a Floer functional.  The Cohen-Jones-Segal construction associates a stable homotopy type to a framed flow category whose cohomology is designed to recover the corresponding Morse or Floer cohomology.  We obtain analogues of the Whitney trick and of handle cancellation for framed flow categories: in this new setting these are moves that can be performed to simplify a framed flow category without changing the associated stable homotopy type.

These moves often enable one to compute by hand the stable homotopy type associated to a framed flow category.  We apply this in the setting of the Lipshitz-Sarkar stable homotopy type (corresponding to Khovanov cohomology) and the stable homotopy type of a matched diagram due to the authors (corresponding to $\SL_n$ Khovanov-Rozansky cohomology).
\end{abstract}

\maketitle

\tableofcontents

\section{Introduction}
\label{sec:introduction}
Given a Morse-Smale function $f: M \rightarrow \R$ on a compact Riemannian manifold $M$, it is well-known that there is a handle decomposition of $M$ corresponding to $f$.

Suppose that this handle decomposition has a handle $h^i$ of index $i$ and a handle $h^{i+1}$ of index $i+1$.  If the attaching sphere of $h^{i+1}$ intersects the belt sphere of $h^i$ in exactly one point then one can obtain a new handle decomposition of $M$ in which $h^i$ and $h^{i+1}$ are omitted but all other handles remain with suitably adjusted attaching maps.  In the Morse theory picture, intersections of the attaching sphere with the belt sphere correspond to flowlines between the critical points $p^i$ and $p^{i+1}$ which give rise to $h^i$ and $h^{i+1}$ respectively.  If there is just a single such flowline then the Morse function may be modified in a neighbourhood of that flowline so that both critical points $p^i$ and $p^{i+1}$ are removed.  This process of modifying the handle decomposition or the Morse function is known as \emph{handle cancellation}.

Suppose now that the attaching sphere of $h^{i+1}$ intersects the belt sphere of $h^i$ in more than one point, and in particular in two points $x^+$ and $x^-$ which have opposite sign.  These correspond to two flowlines between $p^i$ and $p^{i+1}$ which have opposite `framing'.  One would like to `cancel' $x^+$ and $x^-$ against each other and thus reduce the total number of intersection points by two.  In contrast to handle cancellation, there are now topological conditions that need to be satisfied before one can be sure that one can achieve this: in particular we need to be in a situation with a large enough dimension and a large enough degree of connectedness.  The process by which one can cancel such pairs of intersection points (or such pairs of flowlines) is known as the \emph{Whitney trick}.  It is the Whitney trick's failure in general in low dimensions that leads, for example, to the complexity of simply-connected smooth 4-manifold topology.

In this paper we extend the idea of handle cancellation and the Whitney trick to \emph{framed flow categories}.  A framed flow category can be thought of as a way of encoding the flow data that might arise from a Morse function or a Floer functional.  Associated to framed flow category $\cC$ is a stable homotopy type $| \cC |$ by a construction due to Cohen-Jones-Segal \cite{CJS}.  The cohomology of $|\cC|$ is designed to recover the Morse or Floer cohomology of the input.

\begin{figure}[ht]
\centerline{
{
\includegraphics[height=0.8in,width=1in]{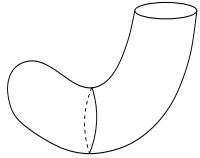}
}}
\caption{Part of a surface with three critical points with respect to the height function.  Taking the Morse-Smale metric to be restriction of the Euclidean metric, we see that there is exactly one flowline between the highest and the middle critical point, while there are two flowlines of opposite sign between the middle and the lowest critical point.}
\label{fig_sock}
\end{figure}

Roughly speaking, a \emph{flow category} $\cC$ consists of a finite number of $\Z$-graded objects where one thinks of the objects as being critical points of a Floer functional and the $\Z$-grading as being an absolute Maslov index.  Then the space of morphisms from an object of index $i$ to an object of index $j$ is a $(i-j-1)$-dimensional compact manifold-with-corners which one thinks of as being a space of flowlines between two critical points.  A \emph{framed} flow category further refines this notion.

\begin{example}
	\label{example_introduction_CP2}
	The cup product structure on cohomology allows one to distinguish between the spaces $X_1 = S^2 \vee S^4$ and $X_2 = \CP^2$.  In fact, even up to (based) stable homotopy equivalence, the spaces $X_1$ and $X_2$ are not the same.  The cup product is not a stable operation so it cannot now distinguish them.  Rather they can be distinguished by the observation that the former has a trivial second Steenrod square (a stable cohomology operation) while for latter it is non-trivial.
	
	One could ask what is the simplest framed flow category $\cC_i$ that gives rise to $X_i = |\cC_i|$ for $i = 1,2$?
	
	In both cases, one needs at least one object in each of the cohomological degrees $2$ and $4$ to generate the cohomology (we are interested in the \emph{reduced} cohomology and we are working up to \emph{based} (de-)suspension).  Let us suppose then that $\cC_1$ and $\cC_2$ each have just two objects which we shall call $p^2$ and $p^4$.
	
	Since there are no objects of degree $3$ it follows that the space of morphisms from $p^4$ to $p^2$ is a compact boundaryless 1-manifold (the absence of degree $3$ objects should be thought of as a lack of critical points at which flowlines from $p^4$ to $p^2$ can break).  Hence the morphism space is a disjoint union of circles in both cases.  How these cases differ will essentially be in the \emph{framings} of the circles.  Different choices here lead to either $X_1$ or $X_2$.  What these choices are is discussed in detail in Subsection \ref{subsec:chang_easy_examples}.
\end{example}

Our main results are the construction of moves analogous to the Whitney trick and to handle cancellation.  These appear as Theorems \ref{thm:whitney_equivalence} and \ref{cancelthm}.  The content of these theorems is in the construction of a framed flow category $\cC_W$ (respectively $\cC_H$) from a framed flow category $\cC$ whose $0$-dimensional morphism spaces suggest the possibility of performing a Whitney trick (resp.~handle cancellation).  More specifically:

Suppose that $\cC$ is framed flow category with two objects $x$ and $y$ of index differing by $1$, such that the morphism space between them contains two morphisms of differing sign.  Then we construct a framed flow category $\cC_W$ with object set $\Ob(\cC_W) = \Ob(\cC)$ and such that the morphism space between $x$ and $y$ has the same signed count but contains two fewer morphisms.  We show that we have
\begin{align*}
(\mbox{Whitney trick}) & \hspace{2.5cm} &
|\cC_W| \simeq |\cC| {\rm .} & \hspace{5cm} &
\end{align*}
On the other hand, suppose that $\cC$ is a framed flow category with two objects $x$ and $y$ of index differing by $1$, with exactly one morphism between them.  Then we construct a framed flow category $\cC_H$ with $\Ob(\cC_H) = \Ob(\cC) \setminus \{x,y\}$, such that
\begin{align*}
(\mbox{Handle cancellation}) & \hspace{1.65cm} &
|\cC_H| \simeq |\cC| {\rm .} & \hspace{5.05cm} &
\end{align*}

\begin{remark}
	\label{rem:handle_slide_comment}
	In the proof of the $h$-cobordism theorem \cite{milnor} in high dimensions, three Morse moves are used.  The third Morse move corresponds to the operation of handlesliding.  This move also has an analogue in the setting of framed flow categories, but we intend rather to discuss it in a later paper when we have a new application for it.  For now we note here that using the moves on a framed flow category $\cC$ one can treat them as operations simplifying the CW cochain complex of $|\cC|$.  Indeed, the Whitney trick ensures that the absolute count of the $0$-dimensional moduli spaces matches the relevant component of the differential, handle cancellation acts as Gauss elimination, and handle sliding acts as base change.  In this way one could find a framed flow category representative $\cS$ of any finite free cochain complex $C$ with $H^*(C) = H^*(|\cC|)$ (and furthermore in which the $0$-dimensional moduli spaces $\cS$ are determined by the differential of $C$) such that $|\cS| \simeq |\cC|$.
\end{remark}

Recently, Lipshitz-Sarkar \cite{LipSarKhov} have constructed a framed flow category $\cL^{\Kh}(D)$ associated to an oriented link diagram $D$.  The associated stable homotopy type $\X^{\Kh}(D) := |\cL^{\Kh}(D)|$ is invariant under the Reidemeister moves and its bigraded cohomology (graded cohomologically and with respect to a splitting of $\X^{\Kh}(D)$ as a wedge sum along a second \emph{quantum} grading) is exactly Khovanov cohomology \cite{kh1}.  The authors \cite{JLS} have associated a framed flow category $\cL^n(D)$ (and associated stable homotopy type $\X^n(D) = |\cL^n(D)|$) to an oriented link diagram $D$ (with a choice of decomposition into elementary tangles) and an integer $n \geq 2$.  In the case $n=2$ we showed that $\X^2 (D) = \X^{\Kh}(D)$ (up to a choice of bigrading normalization).  For $n > 2$ and $D$ a \emph{matched diagram}, the bigraded cohomology of $\X^n(D)$ is $\SL_n$ Khovanov-Rozansky cohomology \cite{khr1}.

Computations of these stable homotopy types has been performed so far essentially by computation of cohomology operations (in particular the first and second Steenrod squares).  With the two moves on framed categories corresponding to the Whitney trick and to handle cancellation, we are able to work by hand at the level of the framed flow category, reducing the number of objects and the complexity of the morphism spaces.  We use these two moves in examples at the end of the paper, each time reducing the complexity of a framed flow category until it is essentially as simple as possible and the associated stable homotopy type can be seen directly without, for example, direct computation of stable cohomology operations.

\subsection{Plan of the paper}
\label{subsec:plan}
We start by giving a brief overview of framed flow categories in Subsection \ref{subsec:flow_cats}.  Then in Subsection \ref{sec:plus-minus} (respectively \ref{sec:pushmap})  we discuss how to define the framed flow category $\cC_W$ (resp.~$\cC_H$) arising from performing the Whitney trick (resp.~handle cancellation) on a framed flow category $\cC$.  We show that the Cohen-Jones-Segal construction gives spaces for which there is a stable homotopy equivalence $|\cC_W| \simeq |\cC|$ (resp.~$|\cC_H| \simeq |\cC|$).

Then in Section \ref{sec:framings} we determine how the Whitney trick and handle cancellation affect the framings on the $1$-dimensional moduli spaces.  These framings may give rise to non-trivial topology in the associated stable homotopy type (exhibited for example in a non-trivial second Steenrod square).  In principle this could be done for moduli spaces of even higher dimension which may provide a way to detect unusual stable homotopy types, such as those that are invisible to stable cohomology operations.

We apply this in Section \ref{sec:examples} to the computation by hand of three stable homotopy types.  In particular, in Subsection \ref{subsec:pretzel_link} we consider the framed flow category $\cL^3(P)$ where $P$ is the pretzel link $P(2,-2,-2)$.  By successive application of the two moves we reduce the flow category in quantum degree $-6$ to two objects as in Example \ref{example_introduction_CP2}.  The framings on the circle moduli spaces then imply that there is a $\CP^2$ in $\X^3(P)$.  In Subsection \ref{subsec:torus34}, we do something similar to quantum degree $11$ of $\cL^{\Kh}(T_{3,4})$ where $T_{3,4}$ is the $(3,4)$ torus knot in the form of the pretzel knot diagram $P(-2,3,3)$.  In this case we reduce to three objects and considerations of framings then shows that there is an $\RP^5 / \RP^2$ in $\X^{\Kh}(T_{3,4})$.  Finally in Subsection \ref{subsec:trefoils} we consider the Lipshitz-Sarkar stable homotopy type of the disjoint union of two trefoils, and in this case we find an $\RP^2 \wedge \RP^2$ as a wedge summand as predicted by \cite[Thm.1]{LawLipSar}.

\section{Morse moves in framed flow categories}
\label{sec:morse_and_flow}

\subsection{Framed flow categories}
\label{subsec:flow_cats}
To define flow categories, we need a sharpening of the concept of smooth manifolds with corners. We will give a somewhat shortened presentation here, for more details see \cite{ich}, \cite{Laures}, \cite{LipSarKhov}, or \cite{JLS}.

\begin{definition} \label{euclidcorners}
 Let $n$ be a non-negative integer and let $\mathbf{d}=(d_0,\ldots,d_n)$ be an $(n+1)$-tuple of non-negative integers. Define
\[
 \E^\mathbf{d}=\R^{d_0}\times [0,\infty) \times \R^{d_1}\times [0,\infty) \times \cdots \times [0,\infty) \times \R^{d_n}.
\]
Furthermore, if $0\leq a < b \leq n+1$, we denote $\E^\mathbf{d}[a:b]=\E^{(d_a,\ldots,d_{b-1})}$ and set $\mathbf{d}_{a:b}=d_a+\cdots+d_{b-1}$. Also, let
\[
 \partial_i \E^\mathbf{d} = \R^{d_0}\times \cdots \times \R^{d_{i-1}} \times \{0\} \times \R^{d_i} \times \cdots \times \R^{d_n}
\]
for $i\in \{1,\ldots,n\}$.

If $J\subset \{1,\ldots,n\}$ is a non-empty subset, let 
\[
 \partial_J \E^\mathbf{d} = \bigcup_{j\in J} \partial_i \E^\mathbf{d}
\]
and $p_J\colon \E^\mathbf{d} \to [0,\infty)^{|J|}$ be the projection such that $p_J|_{\partial_J\E^\mathbf{d}}$ is constant.
\end{definition}

\begin{definition}\label{def:corner_man}
 Let $n$ be a non-negative integer and let $\mathbf{d}=(d_0,\ldots,d_n)$ be an $(n+1)$-tuple of non-negative integers. A smooth $\langle n\rangle$-manifold $M^m$ is a smooth manifold with corners together with an immersion $\imath\colon M \looparrowright \E^\mathbf{d}$ such that
\begin{enumerate}
 \item corner points of codimension $l$ in $M$ are sent to corner points of codimension $l$ in $\E^\mathbf{d}$ for all $0\leq l \leq n$;
 \item if $x\in M$ has a chart neighborhood $[0,\infty)^l\times \R^{m-l}$ with $x$ corresponding to $0\in [0,\infty)^l\times \R^{m-l}$, there is $J\subset \{1,\ldots,m\}$ with $|J|=l$, $\imath(x)\in \partial_J\E^\mathbf{d}$ and the embedding is orthogonal to $\partial_J\E^\mathbf{d}$ at $x$.
\end{enumerate}
For $i=1,\ldots,n$ define
\[
 \partial_iM = \imath^{-1}(\partial_{i}\E^\mathbf{d}).
\]
\end{definition}

The immersions can be improved to embeddings by stabilizing $\mathbf{d}$, and immersions (resp.\ embeddings) are referred to as \em neat \em if they satisfy the conditions of Definition \ref{def:corner_man}.

\begin{definition}
 A \em framed flow category \em consists of a category $\Cat$ with finitely many objects $\Ob=\Ob(\Cat)$, a function $\gr{\cdot}\colon \Ob \to \Z$, called the \em grading\em, an $(n+1)$-tuple of non-negative integers $\mathbf{d}=(d_k,\ldots,d_{n+k})$ and a collection $\varphi_\cdot$ of immersions satisfying the following:
\begin{enumerate}
 \item $k = \min\{|x|\,:\, x \in \Ob(\Cat)\}$ and $n = \max\{|x|\,:\, x \in \Ob(\Cat)\}-k$.
 \item $\Hom(x,x)=\{\id\}$ for all $x\in \Ob$, and for $x\not=y \in \Ob$, $\Hom(x,y)$ is a smooth, compact $(\gr{x}-\gr{y}-1)$-dimensional $\langle \gr{x}-\gr{y}-1\rangle$-manifold which we denote by $\mathcal{M}(x,y)$, and whose immersions are functions $\imath_{x,y}\colon \M(x,y) \to \E^\mathbf{d}[|y|:|x|]$.
 \item For $x,y,z\in \Ob$ with $\gr{z}-\gr{y}=m$, the composition map
$$\circ\colon \mathcal{M}(z,y) \times \mathcal{M}(x,z) \to \mathcal{M}(x,y)$$
is an embedding into $\partial_m\mathcal{M}(x,y)$. Furthermore,
\begin{align*}
 \circ^{-1}(\partial_i \mathcal{M}(x,y))&=\left\{ \begin{array}{lr}
                                             \partial_i \mathcal{M}(z,y)\times \mathcal{M}(x,z) & \mbox{for }i<m \\
                                             \mathcal{M}(z,y)\times \partial_{i-m}\mathcal{M}(x,z) & \mbox{for }i>m
                                            \end{array}
\right.
\end{align*}
and
\[
 i_{x,y}(p\circ q) = (i_{z,y}(p),0,i_{x,z}(q)).
\]

\item For $x\not= y\in \Ob$, $\circ$ induces a diffeomorphism
\[
 \partial_i\mathcal{M}(x,y) \cong \coprod_{z,\,\gr{z}=\gr{y}+i} \mathcal{M}(z,y) \times \mathcal{M}(x,z).
\]
\item The immersions $\imath_{x,y}$ for $x,y\in \Ob(\Cat)$ extend to immersions
\[
 \varphi_{x,y} \colon \M(x,y)\times [-\varepsilon,\varepsilon]^{\mathbf{d}_{|y|:|x|}} \looparrowright \E^\mathbf{d}[|y|:|x|]
\]
which satisfy
\begin{multline*}
 \varphi(x,y)(p\circ q,t_1,\ldots,t_{\mathbf{d}_{|y|,|x|}}) = \\
 (\varphi_{z,y}(p,t_1,\ldots,t_{\mathbf{d}_{|y|:|z|}}),0,\varphi_{x,z}(q,t_{\mathbf{d}_{|y|:|z|}+1},\ldots,t_{\mathbf{d}_{|y|:|x|}}))
\end{multline*}
for all $p\in \M(z,y)$, $q\in \M(x,z)$ where $z\in \Ob(\Cat)$.
\end{enumerate}
The manifold $\mathcal{M}(x,y)$ is called the \em moduli space from $x$ to $y$\em, and we also set $\mathcal{M}(x,x)=\emptyset$.
\end{definition}

A flow category is basically obtained by dropping the immersions. Note that the $\varphi_{x,y}$ are codimension $0$ immersions, and we therefore think of them as framings. Again we can obtain embeddings by stabilization.

In \cite{CJS} a stable homotopy type $|\Cat|$ is associated to a framed flow category $\Cat$. We quickly recall the construction in the form given by \cite{LipSarKhov}.

\begin{definition} \label{cwcomplex}
Let $\Cat$ be a framed flow category embedded into $\E^\mathbf{d}$ for some ${\bf d}=(d_k,\ldots,d_{k+n})$. For an arbitrary object $a$ in ${\rm Ob}(\Cat)$ of degree $i$, recall that for each object $b$ in ${\rm Ob}(\Cat)$ of degree $j<i$, we have the embedding
\[
\varphi_{a,b} : \mathcal{M}(a,b) \times [-\varepsilon, \varepsilon]^{\mathbf{d}_{j:i}} \rightarrow [-R,R]^{d_j} \times [0,R] \times \cdots \times [0,R] \times [-R,R]^{d_{i-1}}
\]
where $R$ is chosen to be large enough that all moduli spaces $\mathcal{M}(a,b)$ can be embedded in this way. The CW complex $|\Cat|$ consists of one $0$-cell (the basepoint) and one $(d_k + \cdots + d_{n+k-1} - k + i)$-cell $\mathcal{C}(a)$ for every object $a$ with $|a|=i$ defined as
\[
[0,R] \times [-R,R]^{d_k} \times \cdots \times [-R,R]^{d_{i-1}} \times \{ 0 \} \times [- \varepsilon , \varepsilon]^{d_i} \times \{ 0 \} \times \cdots \times \{0\} \times [-\varepsilon , \varepsilon]^{d_{n+k-1}}.
\]

Each cell $\mathcal{C}(a)$ is considered a subset of a different copy of  $[0,\infty) \times \E^\mathbf{d}$. The embedding $\varphi$ can be used to identify particular subsets 
\begin{equation} \label{product_in_boundary_cell}
\mathcal{M}(a,b) \times \mathcal{C}(b) \cong \mathcal{C}_b(a) \subset \partial_n \mathcal{C}(a)
\end{equation}
in the following way:
\begin{align*}
\mathcal{C}_b(a) = & [0,R] \times [-R,R]^{d_k} \times \cdots \times [-R,R]^{d_{j-1}} \times \{ 0 \} \times
\varphi_{a,b} \big( \mathcal{M}(a,b) \times [-\varepsilon, \varepsilon]^{\mathbf{d}_{j:i}} \big) \times \\
& \{ 0 \} \times [-\varepsilon, \varepsilon]^{d_m} \times \cdots \times \{0\} \times [-\varepsilon , \varepsilon]^{d_{A-1}} \subset \partial \mathcal{C}(a).
\end{align*}

It will be useful to introduce notation for this identification by letting
\begin{equation} \label{gamma_identification}
\Gamma_{a,b}\colon \mathcal{M}(a,b) \times \mathcal{C}(b) \rightarrow \partial_j \mathcal{C}(a)
\end{equation}
be the identification $\mathcal{M}(a,b) \times \mathcal{C}(b) \cong \mathcal{C}_b(a)$. Let
\begin{equation} \label{C_shift}
C = d_k + \cdots + d_{n+k-1} -k.
\end{equation}
Then the attaching map for each cell $\partial \mathcal{C}(a) \rightarrow |\Cat|^{(C+j-1)}$ is defined via the Thom construction for each embedding into $\partial \mathcal{C}(a)$ simultaneously. That is, for each subset $\mathcal{M}(a,b) \times \mathcal{C}(b) \cong \mathcal{C}_b(a) \subset \partial \mathcal{C}(a)$, the attaching map projects to $\mathcal{C}(b)$ (which carries trivialisation information), and sends the rest of the boundary $\partial \mathcal{C}(a) \setminus \bigcup_{b} \mathcal{C}_b(a)$ to the basepoint.
\end{definition}

The independence of the stable homotopy type of $|\Cat|$ on the various choices is discussed in \cite[\S 3]{LipSarKhov}.

\subsection{The Whitney trick in framed flow categories}
\label{sec:plus-minus}
Let $(\cC, \varphi, \imath)$ be a framed flow category containing objects $x$ and $y$ with $|x| = i$ and $|y| = i-1$, and such that $\M(x,y)$ includes two points, $P$ and $M$, with opposite framings.  We shall define a new framed flow category, written $\cC_W$, such that $|\cC_W| \simeq |\cC|$.

\begin{figure}
\centerline{
{
\psfrag{prod}{$[0,1) \times [0,1) \setminus \{(0,0)\}$}
\psfrag{=}{$=$}
\psfrag{prodmod}{$([0,1) \times [0,1) \setminus \{(0,0)\}) \sqcup ([0,1) \times [0,1) \setminus \{(0,0)\}) / \sim_0$}
\includegraphics[height=2.9in,width=3.2in]{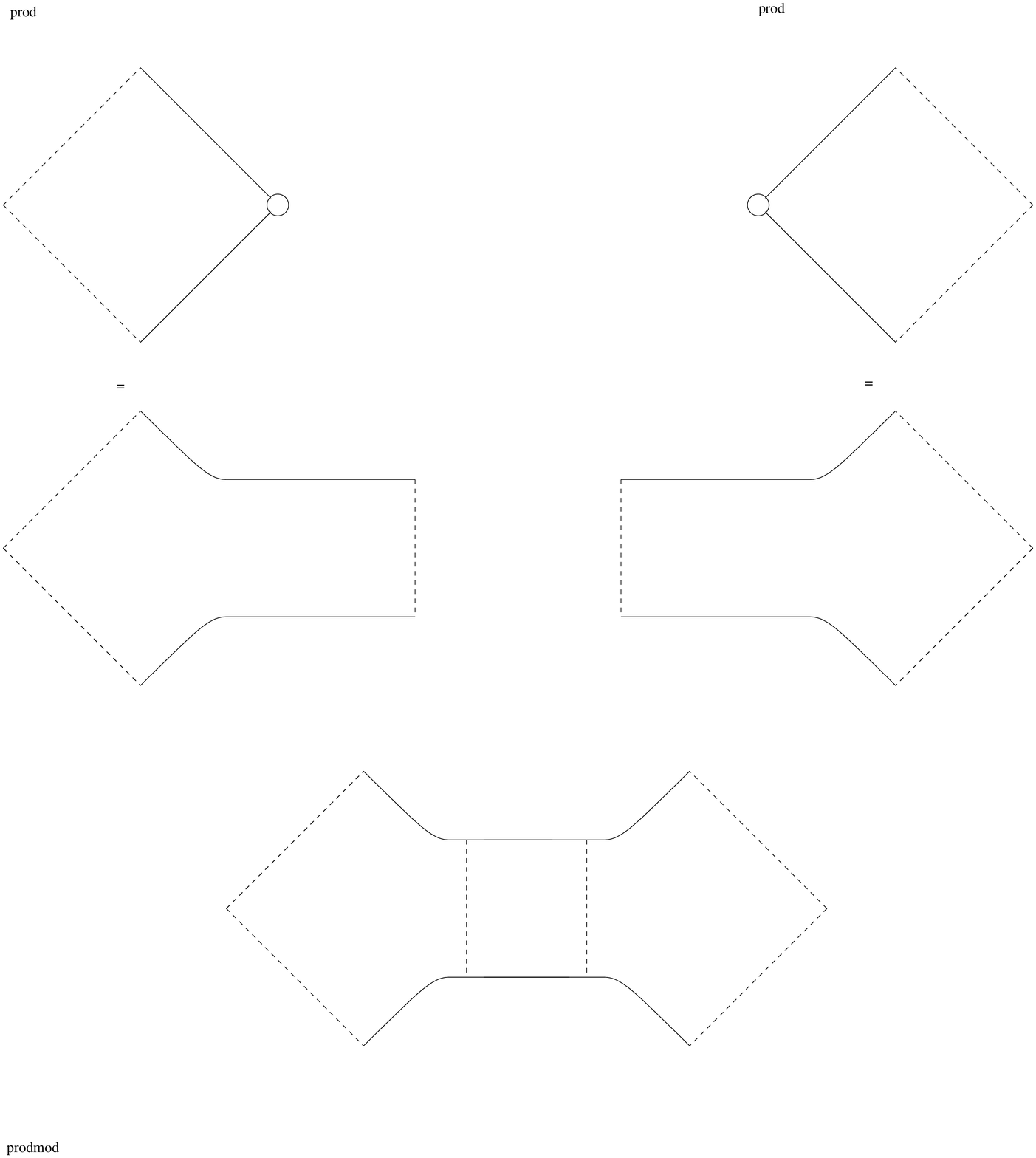}
}}
\caption{We show how to glue together two copies of $[0,1) \times [0,1) \setminus \{(0,0)\}$ by the orientation-reversing gluing equivalence relation $\sim_0$.  The dotted line represents the open boundary.}
\label{fig:2dimglue}
\end{figure}

\begin{definition}
\label{def:CH+-}
With $\cC$ as above, we define the object set of $\cC_W$ by $\Ob(\cC_W) = \{ \bar{a} : a \in \Ob(\cC) \}$.  We now give the moduli spaces of $\cC_W$.

\begin{enumerate}
\item $\M(\bar{x}, \bar{y}) = \M(x,y) \setminus \{ P , M \}$.

\item If $a \in \Ob(\cC)$ is such that $\M(a,x) \not= \phi$ then we have
\[ \{ P , M \} \times \M(a,x) \subset \partial \M(a,y) {\rm .}\]

\noindent Let $\{ P , M \} \times [0,1) \times \M(a,x)$ be a collar neighbourhood of this subset and write $(P, t, p) \sim (M , 1/2 - t, p)$ for $0 < t < 1/2$ and all $p \in \M(a,x)$.  Now we define
\[ \M(\bar{a}, \bar{y}) = (\M(a,y) \setminus \{ P , M \} \times \M(a,x))/\sim {\rm .}\]

\item Similarly, if $b \in \Ob(\cC)$ is such that $\M(y,b) \not= \phi$ then we have
\[ \M(y,b) \times \{ P , M \} \subset \partial \M(x,b) {\rm .}\]

\noindent Let $\M(y,b) \times [0,1) \times \{ P , M \}$ be a collar neighbourhood of this subset and write $(p, t, P) \sim ( p, 1/2 - t , M)$ for $0 < t < 1/2$ and all $p \in \M(y,b)$.  And we define
\[ \M(\bar{x}, \bar{b}) = (\M(x,b) \setminus \M(y,b) \times \{ P , M \})/\sim {\rm .}\]

\item If $a,b \in \Ob(\cC)$ and both $\M(a,x) \not= \phi$ and $\M(y,b) \not= \phi$ then we have
\[ \M(y,b) \times \{P,M\} \times \M(a,x) \subset \partial \M(a,b) {\rm .} \]

\noindent Let $\M(y,b) \times [0,1) \times \{P,M\} \times [0,1) \times \M(a,x)$ be a neighbourhood of this subset in $\M(a,b)$.  Choose this neighbourhood such that $\M(y,b) \times \{0\} \times \{P,M\} \times [0,1) \times \M(a,x)$ is a collar neighbourhood of $\M(y,b)\times \{P,M\}\times \M(a,x)$ in $\M(y,b) \times \M(a,y)$ and $\M(y,b) \times[0,1) \times \{P,M\} \times \{0\}\times \M(a,x)$ is a collar neighbourhood of $\M(y,b)\times \{P,M\}\times \M(a,x)$ in $\M(x,b) \times \M(a,x)$.

Now, using the equivalence relation $\sim_0$ given in Figure \ref{fig:2dimglue}, we define the equivalence $\sim$ on $\M(y,b) \times[0,1) \times \{P,M\} \times[0,1) \times \M(a,x)$ by requiring
\[ (p,r, P,s, q) \sim (p,t,Q,u,q) \]

\noindent for $r,s,t,u \in [0,1)$, all $p \in \M(y,b)$, and all $q \in \M(a,x)$ if and only if
\[ (r,s) \sim_0 (t,u) {\rm .} \]

Then we define
\[ \M(\bar{a}, \bar{b}) = (\M(a,b) \setminus \M(y,b) \times \{P,M\} \times \M(a,x)) /\sim{\rm .}\]

\noindent Note that the equivalence relation $\sim_0$ may be chosen to be compatible with (2) and (3).
\item In all other cases we define $\M(\bar{a}, \bar{b}) = \M(a,b)$.
\end{enumerate}
\end{definition}

Clearly this defines a flow category $\cC_W$.

Now suppose that $\cC$ comes with a framed embedding $(\cC, \imath, \varphi)$ into the Euclidean space $\E^\bd$.  After possibly a stabilization and an isotopy, we may assume that $\imath_{x,y}$ takes $P$ and $M$ to the points $(-1,0,\ldots,0)$ and $(1,0,\ldots,0)$ in $\R^{d_i}$ respectively.  Furthermore we may assume that for all $a,b \in \Ob(\cC)$ such that $|a| = i$ and $|b| = i-1$ we have $\imath_{a,b}^{-1} (\R \times \{ (0,\ldots,0) \}) \subseteq \{ P , Q \}$.

Also, thinking of the framings of $P$ and $M$ as ordered $d_i$-tuples of orthonormal vectors, we may assume that the framings of $P$ and $M$ differ only in the first vector, and these vectors are $(1,0,\ldots,0)$ and $(-1,0,\ldots,0)$ for $P$ and $M$ respectively.

Now, collar neighbourhoods are embedded transversely to the boundaries of $\E^\bd$.  So if $a \in \Ob(\cC)$ such that $\M(a,x) \not= 0$ then we may assume that the embedding $ \imath_{a,y}|_{\{ P , M \} \times [0,1) \times \M(a,x)}$ of the collar neighbourhood of $\{ P , M \} \times \M(a,x)$ described in Definition \ref{def:CH+-} satisfies
\begin{eqnarray*}
\imath_{a,y}|_{\{ P , M \} \times [0,1) \times \M(a,x)} (P,t,p) &=& ((-1,0,\ldots,0),t,\imath_{a,x}(p)) \\
{\rm and} \,\,\, \imath_{a,y}|_{\{ P , M \} \times [0,1) \times \M(a,x)} (M,t,p) &=& ((1,0,\ldots,0),t,\imath_{a,x}(p))
\end{eqnarray*}
\noindent for all $t \in [0,1)$ and all $p\in \M(a,x)$ where the image lies in
\[ \E^\bd[i-1,|a|] = \R^{d_i}\times [0,\infty) \times \E^\bd[i, |a|] {\rm .} \]

\noindent Furthermore, we may assume that the framing of this collar neighbourhood is given by the product framing of $\{P,M\} \times \M(a,x)$ (via the identification of normal bundles using the Euclidean inner product).

Similarly, we may assume for $b \in \Ob(\cC)$ with $\M(y,b) \not= \phi$ that we have
\begin{eqnarray*}
\imath_{x,b}|_{\M(y,b) \times [0,1) \times \{ P , M \}} (p,t,P) &=& (\imath_{y,b}(p),t,(-1,0,\ldots,0)) \\
{\rm and} \,\,\, \imath_{x,b}|_{\M(y,b) \times [0,1) \times \{ P , M \}} (p,t,Q) &=& (\imath_{y,b}(p),t,(1,0,\ldots,0))
\end{eqnarray*}
\noindent for all $t \in [0,1)$ and all $p\in \M(y,b)$, and that the framing is given by the product framing of $\M(y,b) \times \{P,M\}$.

Finally if $a,b \in \Ob(\cC)$ and both $\M(a,x) \not= \phi$ and $\M(y,b) \not= \phi$, then we may assume that the embedding $\imath_{a,b}$, in the neighbourhood of $\M(y,b) \times \{P,M\} \times \M(a,x)$ given in Definition \ref{def:CH+-}, satisfies
\begin{align*}
\imath_{a,b}|_{\M(y,b) \times [0,1) \times  \{P,M\} \times [0,1) \times \M(a,x)}(p,r, P,s, q) =\\
 (\imath_{y,b}(p),r,(-1,0,\ldots,0),s,\imath_{a,x}(q)) {\rm ,} \\
\imath_{a,b}|_{\M(y,b) \times [0,1) \times \{P,M\} \times [0,1) \times \M(a,x)}(p,r, M,s, q) =\\
(\imath_{y,b}(p),r,(1,0,\ldots,0),s,\imath_{a,x}(q))
\end{align*}

\noindent where the image lies in
\[ \E^\bd[|b|,|a|] = \E^\bd[|b|,i-1] \times [0,\infty) \times \R^{d_i} \times [0,\infty) \times \E^\bd[i, |a|] {\rm .} \]

\noindent And we can assume that the framing on this neighbourhood is given by the product framing on $\M(y,b) \times \{P,M\} \times \M(a,x)$.

\begin{figure}
\centerline{
{
\psfrag{prod}{$[0,1) \times [0,1)$}
\psfrag{=}{$=$}
\psfrag{prodmod}{$[0,1) \times [0,1) \sqcup [0,1) \times [0,1) / \sim_0$}
\includegraphics[height=3.5in,width=2.5in]{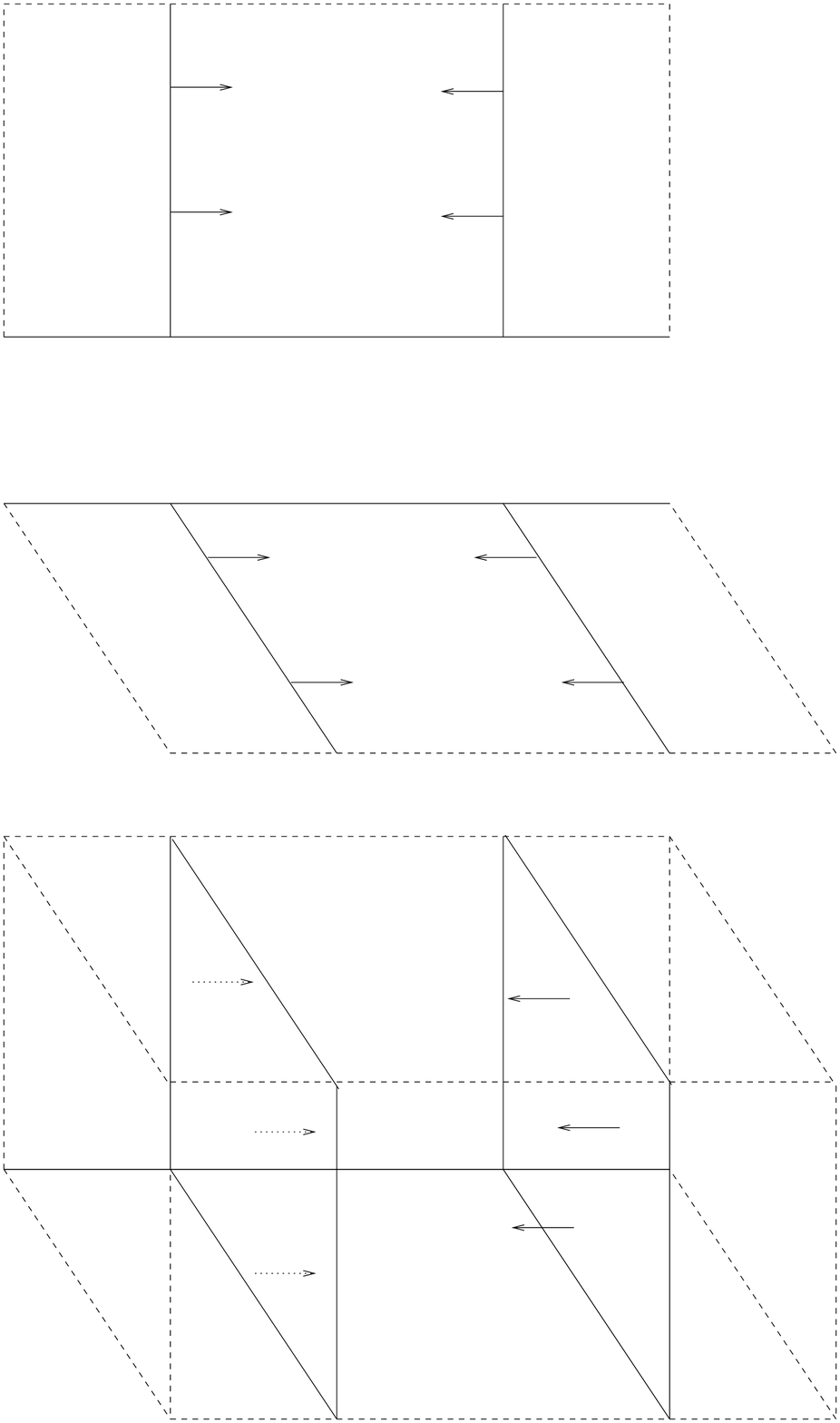}
}}
\caption{The three diagrams illustrate the framed embeddings of collar neighbourhoods of $\{ P , M \} \times \M(a,x)$, $\M(y,b) \times \{P,M\}$, and $\M(y,b) \times \{P,M\} \times \M(a,x)$ respectively.  In the first two cases we have projected to the product of the first coordinate of $\R^{d_i} = \E^\bd[i-1,i]$ and the relevant $[0,\infty)$ factor of $\E^\bd$, and in the third case we have projected to the product of the first coordinate of $\R^{d_i}$ and the two relevant $[0,\infty)$ factors.  In the omitted factors the embeddings are given by the maps $\imath_{a,x}$, $\imath_{y,b}$, and $(\imath_{y,b}, \imath_{a,x})$.  The arrows give the vector that corresponds to the first coordinate of the framing of $\{P,M\}$.  All subsequent vectors in the framing of $P$ agree with those of $Q$.}
\label{fig:precan+-}
\end{figure}

We have illustrated the embeddings of these collar neighbourhoods in Figure \ref{fig:precan+-}, where we have included only the interesting factors of $\E^\bd$.

\begin{figure}
\centerline{
{
\psfrag{prod}{$[0,1) \times [0,1)$}
\psfrag{=}{$=$}
\psfrag{prodmod}{$[0,1) \times [0,1) \sqcup [0,1) \times [0,1) / \sim_0$}
\includegraphics[height=3.5in,width=2.5in]{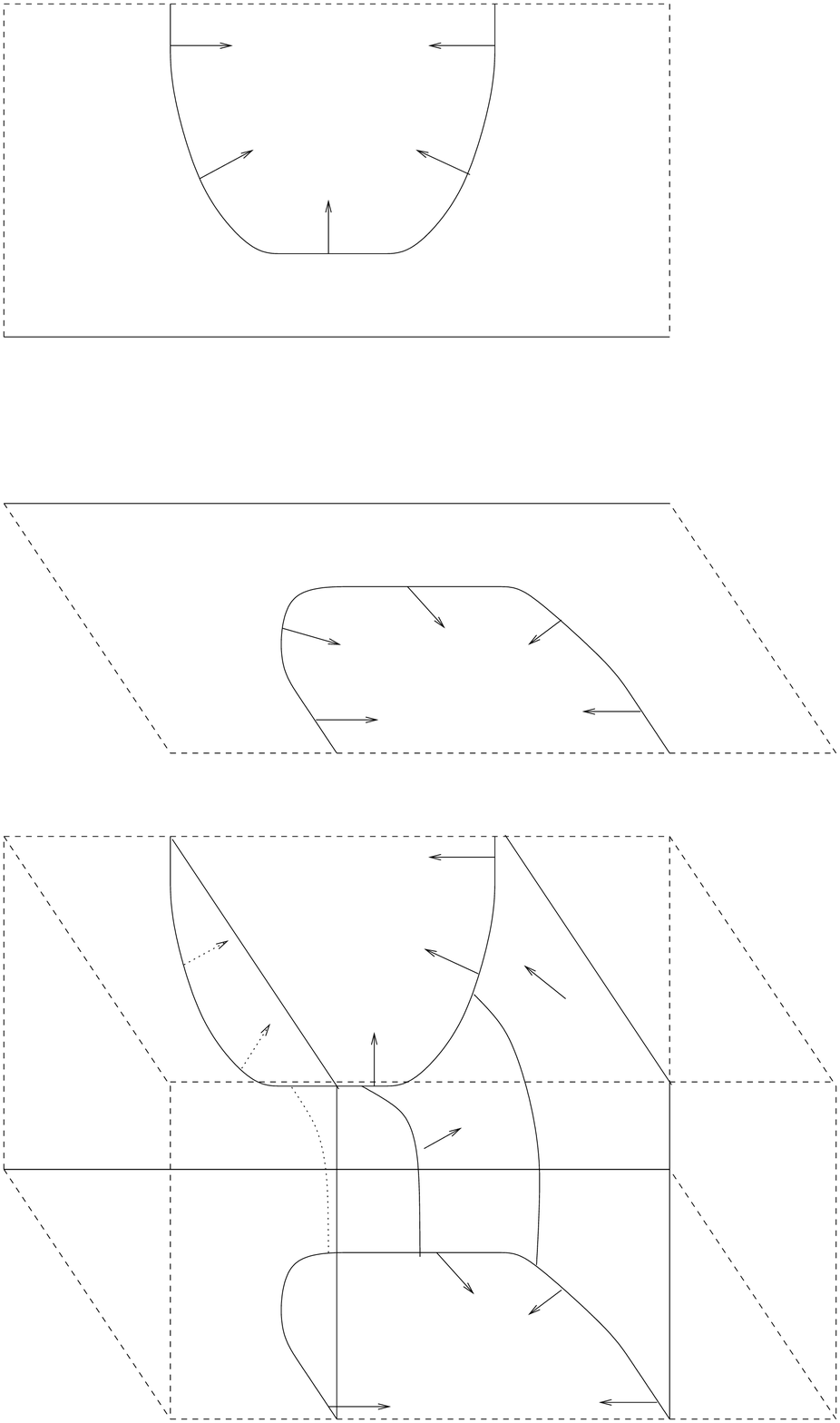}
}}
\caption{The three diagrams illustrate the framed embeddings of open subsets of $\M(\bar{a},\bar{y})$, $\M(\bar{x}, \bar{b})$, and $\M(\bar{a},\bar{b})$ containing the subsets in which moduli spaces of $\cC$ have been glued together.  The coordinates projected to agree with those of Figure \ref{fig:precan+-}.  In the coordinates that have not been shown, and outside the regions shown, the embeddings and the framings are inherited from $(\cC, \imath, \varphi)$.}
\label{fig:can+-}
\end{figure}

\begin{definition}
\label{def:CH+-embedframe}
We give an embedding $\bar{\imath}$ and framing $\bar{\varphi}$ of $\cC_W$ in the Euclidean space $\E^\bd$.

Firstly, in case (1) of Definition \ref{def:CH+-}, the embedding and framing of $\M(\bar{x},\bar{y})$ is defined by restriction of $\imath_{x,y}$ and of $\varphi$.  In case (5) of Definition \ref{def:CH+-}, the embeddings and framings of $\cC_W$ agree with those of $\cC$.

In cases (2), (3), and (4) of Definition \ref{def:CH+-}, we define the framing and embedding of $\cC_W$ to differ only from those of $\cC$ in a small neighbourhood of the gluing regions given in those cases.  How the framings and embeddings differ is described in Figure~\ref{fig:can+-}.
\end{definition}

It remains to conclude that the stable homotopy type associated to $(\cC_W, \bar{\imath}, \bar{\varphi})$ agrees with that associated to $(\cC,\imath,\varphi)$.  In fact, we can say a little more.

\begin{proposition}
\label{CH+-=C}
We write $|\cC|$ and $|\cC_W|$ respectively for the CW-complexes (and not just the stable homotopy type of those complexes) determined by applying the construction due to Cohen-Jones-Segal to the framed flow categories $(\cC, \imath, \varphi)$ and $(\cC_W, \bar{\imath}, \bar{\varphi})$.  Then we write $f_a$ and $f_{\bar{a}}$ respectively for the attaching maps of the cells $\cell(a)$ and $\cell(\bar{a})$ for all $a \in \Ob(\cC)$.  

For each $t \in [0,1]$ there exist maps
\[ F_{a,t}: \{ t \} \times \partial \cell(a) \rightarrow Y^{|a| - 1}_t \]

\noindent where $Y^i_t$ is defined inductively for increasing $i$ by
\[ Y^i_t = Y^{i-1}_t \cup_{F_{a,t}} \{t\} \times \cell(a) \]

where the union is taken over all cells $\cell(a)$ of dimension $i$ and $Y^0_t = \{ {\rm pt} \}$.

These $F_{a,t}$ are such that the maps
\[ F_a : [0,1] \times \partial \cell(a) \rightarrow Y^{|a|-1} : (t,x) \mapsto F_{a,t}(x) \]

are continuous where $Y^i$ is defined inductively for increasing $i$ by
\[ Y^i = Y^{i-1} \cup_{F_a} [0,1] \times \cell(a) \]

and $Y^0 = \{pt\}$.

Furthermore, identifying $\cell(a)$ with $\{0\} \times \cell(a)$ and $\cell(\bar{a})$ with $\{1\} \times \cell(a)$ we have that $F_{a,0} = f_a$ and $F_{a,1} = f_{\bar{a}}$.
\end{proposition}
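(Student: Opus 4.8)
The plan is to build the spaces $Y^i_t$, $Y^i$ and the maps $F_{a,t}$, $F_a$ by induction on the grading $i$, exploiting the feature --- deliberately built into Definitions \ref{def:CH+-} and \ref{def:CH+-embedframe} --- that $\cell(\bar a)$ and $\cell(a)$ are literally the \emph{same} cube: the grading and the tuple $\bd$ are unchanged in passing from $\cC$ to $\cC_W$, hence so are the shift $C$ of \eqref{C_shift} and every cell dimension $C+i$, and only the attaching maps vary. By Definition \ref{def:CH+-embedframe} the framed embedding $(\cC_W,\bar\imath,\bar\varphi)$ coincides with $(\cC,\imath,\varphi)$ outside an arbitrarily small neighbourhood of the gluing regions of cases (1)--(4) of Definition \ref{def:CH+-}. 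Since $f_a$ and $f_{\bar a}$ are produced from these framed embeddings by the Thom-collapse construction of Definition \ref{cwcomplex}, the two attaching maps agree outside the $\Gamma$-images in $\partial\cell(a)$ of those regions, together with their occurrences inside the boundary strata $\partial_j\cell(a)$ coming from the composition maps; collect all of these into a single region $N_a\subset\partial\cell(a)$. Outside $N_a$ I would take $F_{a,t}$ not to depend on $t$: on each $\cell_b(a)\cong\M(a,b)\times\cell(b)$ let it be the map to $\cell(b)$ prescribed by the Thom collapse, followed by the time-$t$ characteristic map of $\cell(b)$ in $Y^{|a|-1}_t$ (which varies continuously by the inductive construction of $Y^{|a|-1}$), and on the complement of all the $\cell_b(a)$ let it be constantly the basepoint. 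The problem is thereby localised to $N_a$.

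Inside $N_a$, cases (1)--(4) are all governed by one elementary model, taken in product with lower-dimensional data: two parallel $\bd$-framed sheets sitting beside the affine subspaces $\{(-1,0,\ldots,0)\}$ and $\{(1,0,\ldots,0)\}$ of the relevant $\R^{d_i}$-factor of $\E^\bd$, carrying first framing vectors $+e_1$ and $-e_1$ but identical remaining framing vectors, are replaced by a single sheet joining the two collar ends around the deleted corner, exactly as in Figures \ref{fig:2dimglue}--\ref{fig:can+-}. I would fix, once and for all, an explicit isotopy realising this move: in the two visible coordinates it is the standard ``U-turn'' family of framed embedded arcs, beginning at $t=0$ as the two parallel rays and ending at $t=1$ as a single arc that runs in along one ray, rounds the corner and runs out along the other, the first framing vector being rotated continuously through angle $\pi$ along the arc; in the remaining coordinates it is the product with $\imath_{a,x}$, $\imath_{y,b}$, or $(\imath_{y,b},\imath_{a,x})$ according to the case, carrying the corresponding product framing. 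Feeding this $t$-family of framed embeddings through the Thom-collapse construction of Definition \ref{cwcomplex} yields a continuous family of maps from the part of $\partial\cell(a)$ lying in $N_a$ into the union of the cells $\cell(b)$, $|b|<|a|$; gluing this with the $t$-independent part gives $F_{a,t}\colon\{t\}\times\partial\cell(a)\to Y^{|a|-1}_t$, continuous in $(t,x)$, and the endpoints of the U-turn family show directly that $F_{a,0}=f_a$ and $F_{a,1}=f_{\bar a}$ under the identifications $\cell(a)=\{0\}\times\cell(a)$, $\cell(\bar a)=\{1\}\times\cell(a)$. Defining $Y^i_t$ and $Y^i$ by the pushouts of the statement then closes the induction; that the image of $F_{a,t}$ really lies in $Y^{|a|-1}_t$, and restricts correctly on each boundary stratum $\partial_j\cell(a)$ so that the iterated pushouts are consistent, is supplied by the compatibility axiom for the composition maps of a flow category together with the inductive hypothesis.

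The one step that requires genuine care --- and the main obstacle --- is \emph{coherence of the local isotopies under composition}. The gluing regions are nested: the region used to define $\M(\bar a,\bar b)$ restricts, on its two collar faces, to products with the regions used to define $\M(\bar a,\bar y)$ and $\M(\bar x,\bar b)$, and these in turn incorporate the deletion of $\{P,M\}$ from $\M(x,y)$ of case (1), as recorded in the last paragraph of Definition \ref{def:CH+-} and in the normalisation of $\bar\imath,\bar\varphi$ preceding Definition \ref{def:CH+-embedframe} --- precisely the sense in which ``$\sim_0$ may be chosen compatibly with (2) and (3)''. I must therefore choose the planar U-turn isotopy \emph{once} and verify that, because every embedding in $N_a$ was normalised to be a product of that planar picture with the various $\imath_{\,\cdot\,,\,\cdot}$ and product framings, the induced $t$-families automatically agree on all overlaps; this is exactly what makes the $F_{a,t}$ assemble into a continuous $F_a$ landing in $Y^{|a|-1}$, and what makes the $Y^i_t$ fit together into $Y^i$ over $[0,1]$. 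The remaining verifications --- that the Thom collapse stays continuous where the U-turn sheet runs off toward the open (``dotted'') boundary of Figure \ref{fig:2dimglue}, i.e.\ into the region already collapsed to the basepoint, and that distinct gluing regions stay disjoint after these coherent choices and so may be treated independently --- are routine local computations of saddle-move type, which I would carry out without dwelling on them. (From the resulting family one then deduces $|\cC_W|\simeq|\cC|$ formally, the family deformation-retracting onto its slices over $0$ and $1$; but that deduction is not part of the present proposition.)
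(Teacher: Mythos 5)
Your skeleton tracks the paper's proof --- inductive construction over $|a|$, the observation that $\cell(a)$ and $\cell(\bar a)$ are the same cube, localisation to small gluing regions, and a single planar model taken in product with $\imath_{a,x}$, $\imath_{y,b}$, $(\imath_{y,b},\imath_{a,x})$ for the various cases --- and the paper likewise encodes the interpolation in a figure and then handles the other boundary faces by rotating and restricting that picture. But there is a genuine gap at exactly the place the paper flags in the first lines of its proof. You construct the intermediate maps $F_{a,t}$ by ``feeding a $t$-family of framed embeddings through the Thom-collapse construction,'' describing that family as ``the standard `U-turn' family of framed embedded arcs, beginning at $t=0$ as the two parallel rays and ending at $t=1$ as a single arc.'' No such family can exist: at $t=0$ the embedded manifold near the gluing region is disconnected (two collar arcs, one at $P$ and one at $M$), at $t=1$ it is a single connected U-turn arc, and the topological type of the source is constant under an isotopy through embeddings. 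The paper says this in so many words --- ``Ideally we would like a smooth deformation through framed embeddings\ldots, but since $\cC\neq\cC_W$ this cannot be achieved'' --- and its workaround is that the intermediate $F_{a,t}$ are \emph{not} Thom collapses of framed embeddings for $t$ near $1/2$: there the two normal fibres at the collar ends merge into a singular fibre (``a single head and a double tail'' in the third and fourth panels of Figure \ref{fig:C+-mash}) which still maps onto the $[-\varepsilon,\varepsilon]^{d_i}$ factor of $\cell(y)$ with degree one, with the map determined by continuity of $F_{a,\cdot}$ rather than by a tubular neighbourhood. Your argument needs this non-Thom stage spelled out, otherwise the family is simply undefined for middle values of $t$.

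Secondarily, you identify coherence across the nested gluing regions as ``the main obstacle''; in the paper that part is comparatively mechanical precisely because everything is the product of one fixed planar picture, and the real content sits in the singular-fibre step above. Once you replace ``family of framed embeddings'' by ``family of maps which for $t\in(0,1)$ include one singular fibre and otherwise agree with the Thom collapse,'' the rest of your outline goes through as the paper's does.
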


It is then easy to see that $|\cC| = \cup_i Y^i_0$ and $|\cC_W| = \cup_i Y^i_1$ are both subsets of $\cup_i Y^i$, and that this larger space deformation retracts onto both $|\cC|$ and onto $|\cC_W|$.  Hence the following is immediate.

\begin{theorem}
	\label{thm:whitney_equivalence}
The spaces $|\cC|$ and $|\cC_W|$ from Proposition \ref{CH+-=C} are homotopy equivalent.\qed
\end{theorem}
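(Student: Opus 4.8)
The plan is to derive Theorem~\ref{thm:whitney_equivalence} as a purely formal consequence of Proposition~\ref{CH+-=C}. Write $Y=\bigcup_i Y^i$ for the space assembled from the interpolating attaching maps $F_a$ supplied by that proposition. Since each $F_{a,t}$ sends $\{t\}\times\partial\cell(a)$ into the slice $Y^{|a|-1}_t\subseteq Y^{|a|-1}$, the attaching of each cell $[0,1]\times\cell(a)$ respects the first coordinate, so there is a well-defined continuous projection $\pi\colon Y\to[0,1]$ which on every cell $[0,1]\times\cell(a)$ is projection onto the interval factor and which satisfies $\pi^{-1}(0)=\bigcup_i Y^i_0=|\cC|$ and $\pi^{-1}(1)=\bigcup_i Y^i_1=|\cC_W|$, where we use that by Proposition~\ref{CH+-=C} the cell $\cell(a)$ is attached via $F_{a,0}=f_a$ and $\cell(\bar a)$ via $F_{a,1}=f_{\bar a}$. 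Thus $|\cC|$ and $|\cC_W|$ are the two end fibres of $\pi$, and it suffices to produce a deformation retraction of $Y$ onto each of them, for then $|\cC|\simeq Y\simeq|\cC_W|$ and the theorem follows.

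To construct the deformation retraction of $Y$ onto $|\cC|$ I would collapse the interval parameter to $0$, building the homotopy inductively over the skeleta $Y^i$. Put $\rho_s(t)=(1-s)t$. On $Y^0=\{\mathrm{pt}\}$ take $r_s=\id$. Assume $r_s$ has been defined on $Y^{i-1}$ with $r_0=\id$, $r_s|_{Y^{i-1}_0}=\id$, $r_1(Y^{i-1})=Y^{i-1}_0$, and the compatibility property $r_s(F_{a,t}(x))=F_{a,\rho_s(t)}(x)$ for every cell $\cell(a)$ attached at the $i$-th stage and every $x\in\partial\cell(a)$. Extend $r_s$ over each $[0,1]\times\cell(a)$ by the formula $r_s(t,z)=(\rho_s(t),z)$. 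On the gluing locus $[0,1]\times\partial\cell(a)$ the point $(t,x)$ is identified in $Y^i$ with $F_{a,t}(x)$, and the extended formula sends it to $(\rho_s(t),x)$, which is identified with $F_{a,\rho_s(t)}(x)$; by the compatibility property this equals $r_s(F_{a,t}(x))$, so the extension descends to $Y^i$. Continuity follows from continuity of the $F_a$ (part of Proposition~\ref{CH+-=C}) together with continuity of $\rho$, and the compatibility property persists at the next stage because the extended $r_s$ still merely reparametrises the $\pi$-coordinate. Passing to the colimit over $i$ gives a deformation retraction of $Y$ onto $|\cC|$. Running the identical argument with $\rho_s(t)=1-(1-s)(1-t)$ collapses the interval parameter to $1$ and gives a deformation retraction of $Y$ onto $|\cC_W|$; combining the two proves the theorem.

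I expect the only real content --- the one point at which the explicit construction carried out in the proof of Proposition~\ref{CH+-=C} is used, rather than merely its statement --- to be the verification that the compatibility property $r_s(F_{a,t}(x))=F_{a,\rho_s(t)}(x)$ propagates through the induction. Concretely this amounts to checking that the families $\{F_{a,t}\}_{t\in[0,1]}$ are products over the interval parameter: by Definitions~\ref{def:CH+-} and~\ref{def:CH+-embedframe}, away from the distinguished gluing regions the embeddings, framings and hence moduli spaces of $\cC_W$ agree with those of $\cC$, so there the maps $F_{a,t}$ do not depend on $t$ at all (only the target copy of $\cell(b)$ changes with $t$, giving exactly the product form $F_{a,t}(x)=(t,w(x))$), while inside those regions the $F_{a,t}$ are modelled on the explicit pictures of Figures~\ref{fig:precan+-}--\ref{fig:can+-}, which again interpolate in the interval parameter in a product-compatible way. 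Granting this structural feature --- which is precisely what underlies the remark preceding the theorem that ``it is then easy to see'' that $Y$ deformation retracts onto both subspaces --- everything else is the formal cell-by-cell bookkeeping sketched above, and $|\cC|$ and $|\cC_W|$ are homotopy equivalent.
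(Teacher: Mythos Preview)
Your overall plan is the paper's own: build the interpolating space $Y=\bigcup_i Y^i$ from Proposition~\ref{CH+-=C} and argue it deformation retracts onto each end fibre. The paper offers no more than ``easy to see'' at this point, so you are attempting to supply the missing detail.

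The gap is in your explicit retraction $r_s(t,z)=(\rho_s(t),z)$. For this to descend to $Y$ you need your compatibility $r_s(F_{a,t}(x))=F_{a,\rho_s(t)}(x)$; unwinding it with $r_s$ already of product form on the lower skeleton forces the $\cell(b)$-component of $F_{a,t}(x)$ to be \emph{independent of $t$}. That is false: the entire content of Proposition~\ref{CH+-=C} is a genuine homotopy from $f_a$ to $f_{\bar a}$, and Figure~\ref{fig:C+-mash} shows the Thom-collapse to $\cell(y)$ changing with $t$ in the gluing region (embedded arcs move, a singular fibre appears and two fibres merge by $t=1$). So your asserted product form $F_{a,t}(x)=(t,w(x))$ fails exactly where the construction is nontrivial, and the formula $r_s(t,z)=(\rho_s(t),z)$ does not define a map on $Y$.

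What does work is retracting each box onto its bottom-plus-sides rather than sliding along the interval. Inductively assume a strong deformation retraction of $Y^{i-1}$ onto $Y^{i-1}_0$. First deformation retract each $[0,1]\times\cell(a)$ onto $\{0\}\times\cell(a)\cup[0,1]\times\partial\cell(a)$, fixing that subset pointwise (radial projection from a point above the top face); since $[0,1]\times\partial\cell(a)$ is held fixed this extends by the identity on $Y^{i-1}$, retracting $Y^i$ onto $Y^{i-1}\cup_{f_a}\bigl(\{0\}\times\cell(a)\bigr)$. Now apply the inductive retraction of $Y^{i-1}$ onto $Y^{i-1}_0$; because $f_a=F_{a,0}$ lands in $Y^{i-1}_0$, which is fixed throughout, it extends by the identity over each $\{0\}\times\cell(a)$, and the composite is a strong deformation retraction of $Y^i$ onto $Y^i_0$. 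The argument for $Y_1$ is symmetric.
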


\begin{figure}
\centerline{
{
\psfrag{prod}{$[0,1) \times [0,1)$}
\psfrag{=}{$=$}
\psfrag{prodmod}{$[0,1) \times [0,1) \sqcup [0,1) \times [0,1) / \sim_0$}
\includegraphics[height=3in,width=5in]{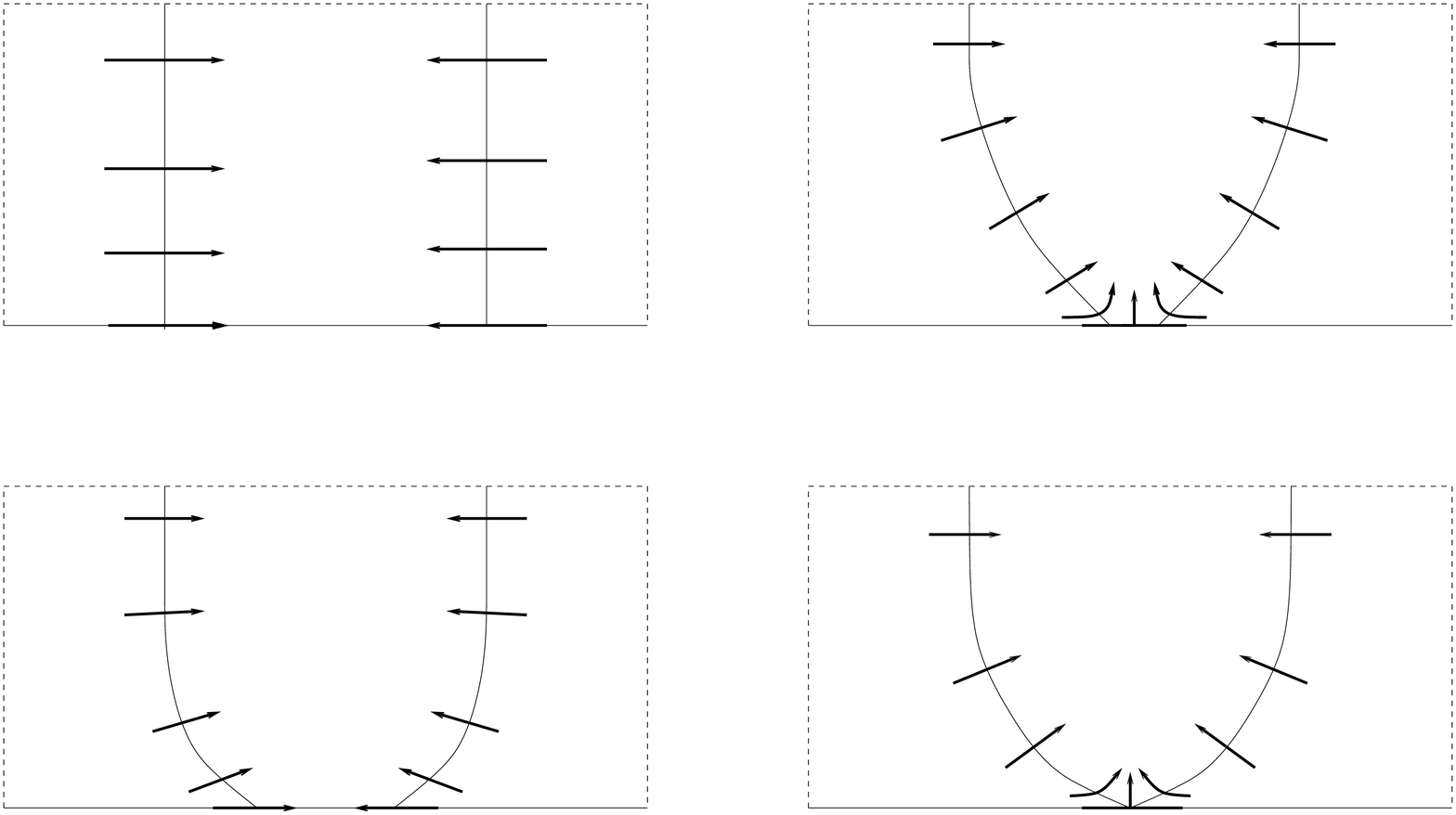}
}}
\caption{We show the sequence of attaching maps $F_{a,t}$ from Proposition \ref{CH+-=C} as for $0 \leq t \leq 1/2$.}
\label{fig:C+-mash}
\end{figure}

\begin{proof}[Proof of Proposition \ref{CH+-=C}.]  We wish to see that we can continuously deform the attaching maps of the complex $|\cC|$ to arrive at the attaching maps of the complex $|\cC_W|$.

The attaching maps of the cells of $|\cC|$ are determined by the framed embedding $(\cC, \imath, \varphi)$.  Ideally we would like a smooth deformation through framed embeddings to arrive at the framed embedding $(\cC, \bar{\imath}, \bar{\varphi})$, but since $\cC \not= \cC_W$ this cannot be achieved.  Instead we pass from $(\cC, \imath, \varphi)$ to $(\cC_W, \bar{\imath}, \bar{\varphi})$ through the deformations in Figure \ref{fig:C+-mash}.

We have illustrated in Figure $\ref{fig:C+-mash}$ the attaching map $f_{a,t}|_{f_{a,t}^{-1}(\cell(y))}$ thought of as a map to $\cell(y)$ for $0 \leq t \leq 1/2$ (the reader should be able to fill in the pictures for $1/2 \leq t \leq 1$ herself) where $\M(a,x) \not= \phi$.  The attaching maps are constructed in the usual way: via a framing and then applying the Thom construction, with the exception illustrated in the third and fourth diagrams of Figure \ref{fig:C+-mash}.  In all diagrams the thick arrows represent (a vector in) the framing, and in fact are the fibres that are going to wrap once over the first coordinate of $[-\epsilon,\epsilon]^{d_i}$ thought of as a factor of $\cell(y)$.  In the third and fourth diagram there is one thick arrow with a single head and a double tail, while all other arrows are homeomorphic to the interval $[-\epsilon, \epsilon]$.  These singular fibres again map to the first coordinate of $[-\epsilon,\epsilon]^{d_i}$, with the maps determined by continuity of $f_{a,t}$.  In the factors not illustrated the framings and embeddings of course do not change with $t$.

The attaching maps $f_{x,t}|_{f_{x,t}^{-1}(\cell(y))}$ thought of as maps to $\cell(y)$ are obtained from Figure \ref{fig:C+-mash} by just looking at the horizontal boundaries of the diagram.  The attaching maps $f_{x,t}|_{f_{a,t}^{-1}(\cell(b))}$ for those $b$ with $\M(y,b) \not= \phi$ are obtained by rotating the diagrams by $\pi/2$.  The attaching maps $f_{a,t}|_{f_{a,t}^{-1}(\cell(b))}$ for $a,b$ with $\M(a,x) \not= \phi$ and $\M(y,b) \not= \phi$ are swept out by rotating the diagrams in \ref{fig:C+-mash} by $\pi/2$.  Finally, for all other $a,b$ the maps $f_{a,t}|_{f_{a,t}^{-1}(\cell(b))}$ do not vary with $t$.
\end{proof}

\subsection{Handle cancellation in framed flow categories}
\label{sec:pushmap}
Throughout this subsection, let $\Cat$ denote a framed flow category $(\Cat,\imath,\varphi)$ with two of its objects having a one-point moduli space between them, $\mathcal{M}(x,y) = \ast$.  Let $|x|=i$ and $|y| = i-1$.  We shall show that the space arising from the handle-cancelled framed flow category $\cC_H$ is stably homotopy equivalent to the space arising from the original framed flow category $\cC$.  This appears as Theorem \ref{cancelthm} but before we can state it we need to define, embed, and frame $\cC_H$.  We denote the CW complex associated to a framed flow category $\cS$ by the Cohen-Jones-Segal construction (as opposed to its stable homotopy class) by $|\cS|$ and we term this the \emph{realisation} of $\cS$.

\begin{definition} \label{cancelledcat}
Denote by $\Cat_H$ the flow category whose object set is given by
\[ \Ob(\cC_H) = \{ \bar{a} : a \in (\Ob(\cC) \setminus \{ x,y \} ) \} \]
and whose moduli spaces are given by
\[
 \mathcal{M}(\bar{a},\bar{b}) = \mathcal{M}(a,b)\cup_f \big( \mathcal{M}(x,b)\times \mathcal{M}(a,y) \big)
\]
where $f$ identifies the subsets 
\[
\mathcal{M}(x,b)\times \mathcal{M}(a,x)\cup \mathcal{M}(y,b)\times \mathcal{M}(a,y) \subset \mathcal{M}(a,b)
\]
and
\begin{multline*}
 \mathcal{M}(x,b)\times (\mathcal{M}(x,y)\times \mathcal{M}(a,x)) \cup (\mathcal{M}(y,b)\times \mathcal{M}(x,y))\times \mathcal{M}(a,y) \\ \subset \mathcal{M}(x,b)\times \mathcal{M}(a,y).
\end{multline*}
We call $\Cat_H$ the {\it cancelled category} (relative to $x$ and $y$) of $\Cat$.
\end{definition}

It follows from the existence of collar neighbourhoods for $\langle n\rangle$-manifolds, see \cite[Lemma 2.1.6]{Laures}, that $\mathcal{M}(\bar{a},\bar{b})$ is a $(|a|-|b|-1)$-dimensional $\langle |a|-|b|-1 \rangle$-manifold, and that $\Cat_H$ is a flow category, with object grading inherited from $\cC$.

For the framed flow category $\Cat$, we must provide framed neat embeddings of $\Cat_H$, so that we can form $|\cC_H|$.  Recall that $\mathcal{C}(x)$ is a $(C+i)$-cell (for some $C>>0$) and a single copy of $\mathcal{C}(y) \cong \mathcal{M}(x,y) \times \mathcal{C}(y)$ is identified with a subset on the boundary of $\cell(x)$ (in fact on the $(i-1)$-face) via
\begin{align*}
\mathcal{C}_y(x) = & [0,R] \times [-R,R]^{d_{B}} \times \cdots \times [-R,R]^{d_{i-2}} \times \{ 0 \} \times \\
& \imath_{x,y} \big( \mathcal{M}(x,y) \times [-\varepsilon, \varepsilon]^{d_{i-1}} \big) \times \{ 0 \} \times [-\varepsilon, \varepsilon]^{d_i} \times \cdots \times \{0\} \times [-\varepsilon , \varepsilon]^{d_{A-1}} \\
& \subset \partial_{i-1} \mathcal{C}(x){\rm .}
\end{align*}
Note that we can assume $\imath_{x,y}$ embeds the point $\mathcal{M}(x,y) = \{\ast\}$ as $\imath_{x,y}(\ast) = 0$ in $\R^{d_{i-1}}$.  The framing of that point gives a homeomorphism between $\mathcal{C}_y(x)$ and the $(C+i-1)$-cell $\mathcal{C}(y)$.

Choose a homeomorphism $f: \partial \cell(x)\setminus {\rm int}(\cell_y(x)) \rightarrow \cell_y(x)$ which is the identity on $\partial \cell_y(x)$ and is smooth on all boundaries of $\cell(x)$ of codimension $\geq 1$.

Let
\[ A_f = (\partial \cell(x)\setminus {\rm int}(\cell_y(x))) \times [0,1] \sqcup \cell_y(x) / \sim \]
\noindent (where $\sim$ is defined by $(p,s) \sim (p,t)$ for all $p \in \partial \cell_y(x)$ and $s,t \in [0,1]$ and $(q,1) \sim f(q)$ for all $q \in \partial \cell(x)\setminus {\rm int}(\cell_y(x))$)
be a quotient of the mapping cylinder of $f$.

Note that the boundary of $A_f$ is naturally identified with $\partial \cell(x)$ and choose a identification, smooth on the interior, $A_f = \cell(x)$ respecting this on the boundary.

\begin{definition}
\label{pushmap}
For $t \in [0,1]$, define the map $\Psi_t :  \partial \cell(x)\setminus {\rm int}(\cell_y(x)) \rightarrow \cell(x)$ by composing the inclusion $(\partial \cell(x)\setminus {\rm int}(\cell_y(x))) \times \{t\} \subset \partial \cell(x)\setminus {\rm int}(\cell_y(x)) \times [0,1] \sqcup \cell_y(x)$ with the quotient $\sim$.
\end{definition}

Recall from Definition \ref{cancelledcat} that $\mathcal{M}(\bar{a},\bar{b})$ is formed by gluing the two spaces $\mathcal{M}(a,b)$ and $\mathcal{M}(x,b)\times \mathcal{M}(a,y)$ along their common boundaries.  Thus, in order to embed $\M(\bar{a}, \bar{b})$, we shall define an embedding for each of these spaces separately, and emphasise how the gluing works.  The former of the two is embedded with its original embedding from $(\Cat, \imath , \varphi)$, while a framed embedding $\Gamma_{x,b \times a,y}$ of the product moduli spaces $\mathcal{M}(x,b) \times \mathcal{M}(a,y)$ is described in Lemma \ref{embedding1} along with a description of the gluing.

A framed embedding $\Gamma_{\bar{a},\bar{b}}$ of the moduli spaces $\mathcal{M}(\bar{a},\bar{b})$ is then described in Lemma \ref{embedding2}.  Finally, some alteration is needed to ensure that these embeddings are \emph{neat} embeddings, and this is described in Lemma \ref{embedding3}.

In the remainder of this section let $a,b \in \Ob{\cC} \setminus \{ x,y \}$ with $|a| = m > n = |b|$.  We shall write
\[
\Gamma_{a,b} : \mathcal{M}(a,b) \times \mathcal{C}(b) \cong \mathcal{C}_b(a) \hookrightarrow \partial \mathcal{C}(a)
\]
for the inclusions.

\begin{lemma} \label{embedding1}
There is an embedding
\[
\Gamma_{x,b \times a,y} : \mathcal{M}(x,b) \times \mathcal{M}(a,y) \times \mathcal{C}(b) \rightarrow \partial \mathcal{C}(a) {\rm .}
\]
Moreover, this embedding can be defined to agree with $\Gamma_{a,b}$ on the boundary subset $\big( \mathcal{M}(y,b) \times \mathcal{M}(x,y) \big) \times \mathcal{M}(a,y) \cup \mathcal{M}(x,b) \times \big( \mathcal{M}(x,y) \times \mathcal{M}(a,x) \big)$.
\end{lemma}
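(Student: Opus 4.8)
The plan is to build the embedding $\Gamma_{x,b\times a,y}$ by realising the product moduli space $\mathcal{M}(x,b)\times\mathcal{M}(a,y)$ inside a collar of the relevant boundary face of $\cell(a)$, reusing the combinatorics of the Cohen--Jones--Segal cell structure. Recall that $\cell(a)$ is a $(C+m)$-cell and that for each intermediate object $z$ with $|y|\le|z|\le|x|$ the embedding $\varphi$ gives identifications $\Gamma_{a,z}\colon\mathcal{M}(a,z)\times\cell(z)\hookrightarrow\partial\cell(a)$. First I would note that the point $\mathcal{M}(x,y)=\{\ast\}$ together with its framing furnishes a homeomorphism $\cell_y(x)\cong\cell(y)$, exactly as recorded just before the statement; composing, one gets an identification $\mathcal{M}(x,b)\times\cell(y)\cong\mathcal{M}(x,b)\times\cell_y(x)$, and the latter sits inside $\partial\cell(x)$, which in turn is a face of boundary-boundary type inside $\partial\cell(a)$ via $\Gamma_{a,x}$ restricted appropriately. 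Tracking the grading, $\dim(\mathcal{M}(x,b)\times\mathcal{M}(a,y)\times\cell(b)) = (i-1-n-1)+(m-i)+ (C+n) = C+m-2$, which is indeed $\dim\partial\cell(a)-1$, so there is room for such an embedding with normal framing; I would use the product of the framed embeddings $\Gamma_{x,b}$ and the framing data of $\varphi_{a,y}$ to produce it, choosing collars transversally so the pieces fit.

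The second step is to arrange the compatibility on the boundary subset. By Definition of the flow category, $\partial(\mathcal{M}(a,b))$ decomposes via $\circ$ into $\coprod_z\mathcal{M}(z,b)\times\mathcal{M}(a,z)$, and the two pieces relevant here are $\mathcal{M}(x,b)\times\mathcal{M}(a,x)$ (coming from $z=x$) and $\mathcal{M}(y,b)\times\mathcal{M}(a,y)$ (coming from $z=y$); on the product side $\mathcal{M}(x,b)\times\mathcal{M}(a,y)$ the analogous boundary pieces are $\mathcal{M}(x,b)\times(\mathcal{M}(x,y)\times\mathcal{M}(a,x))$ and $(\mathcal{M}(y,b)\times\mathcal{M}(x,y))\times\mathcal{M}(a,y)$. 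Using the compatibility condition (5) in the definition of a framed flow category — that $\varphi_{a,b}(p\circ q,\dots)=(\varphi_{z,b}(p,\dots),0,\varphi_{a,z}(q,\dots))$ — together with the identification $\mathcal{M}(x,y)=\{\ast\}$ embedded at $0$, I would check that $\Gamma_{a,b}$ and the candidate $\Gamma_{x,b\times a,y}$ literally agree on these subsets: on the first piece both restrict to $\varphi_{x,b}(\mathcal{M}(x,b))\times\{0\}\times\varphi_{a,x}(\mathcal{M}(a,x))\times\cell(b)$, and on the second to $\varphi_{y,b}(\mathcal{M}(y,b))\times\{0\}\times\varphi_{a,y}(\mathcal{M}(a,y))\times\cell(b)$, after absorbing the $\ast$. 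If the naive choice does not agree on the nose, I would fix it by an ambient isotopy supported near these boundary faces, using that the framings of $\ast\in\mathcal{M}(x,y)$ were normalised to be the standard frame (as set up just before Definition \ref{cancelledcat}).

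The main obstacle I anticipate is bookkeeping of the face structure: $\partial\cell(a)$ has several codimension-one strata, and one must verify that the image of $\Gamma_{x,b\times a,y}$ lands in the correct stratum (the one indexed, roughly, by "factoring through both $x$ and $y$"), that it meets the lower-codimension corners in the prescribed product way, and that it is disjoint from, or correctly adjacent to, the images $\Gamma_{a,z}$ for the other intermediate objects $z\neq x,y$. There is also the delicate point that the gluing in Definition \ref{cancelledcat} uses the common boundary $\mathcal{M}(x,b)\times\mathcal{M}(a,x)\cup\mathcal{M}(y,b)\times\mathcal{M}(a,y)$, so the embedding of the product piece must match the embedding of $\mathcal{M}(a,b)$ \emph{exactly} there, not just up to isotopy, in order that the glued $\mathcal{M}(\bar a,\bar b)$ inherit a genuine neat embedding; ensuring this will be the content that Lemma \ref{embedding3} later has to clean up, so here I would be content to produce the embedding agreeing on the boundary subset and defer the neatness adjustments. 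Finally, I would remark that everything is done $t$-parametrically in no essential way here — unlike the Whitney-trick case — so the construction is a single embedding and the only real work is the combinatorial verification of where collars go.
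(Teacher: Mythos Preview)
Your proposal misses the central mechanism of the proof and contains a dimension error that obscures where the real work lies.

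First, the sentence ``one gets an identification $\mathcal{M}(x,b)\times\cell(y)\cong\mathcal{M}(x,b)\times\cell_y(x)$, and the latter sits inside $\partial\cell(x)$'' is false: $\cell_y(x)$ already has the same dimension as $\partial\cell(x)$, so the product with $\mathcal{M}(x,b)$ is too large to sit there when $|x|>|b|+1$. What the paper actually does is use the push map $\Psi_1$ (Definition~\ref{pushmap}) to push $\cell_b(x)=\Gamma_{x,b}(\mathcal{M}(x,b)\times\cell(b))$ \emph{across the interior of the ball $\cell(x)$} and onto $\cell_y(x)$; composing with $\Gamma_{x,y}^{-1}$ then yields an embedding of $\mathcal{M}(x,b)\times\cell(b)$ into $\cell(y)$, which is fed into $\Gamma_{a,y}$ to give a preliminary map $\Gamma'_{x,b\times a,y}$. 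You have no analogue of this step, and the product of $\Gamma_{x,b}$ with $\varphi_{a,y}$ does not produce it. (Your dimension count is also off by one: $\dim\mathcal{M}(x,b)=i-n-1$, not $i-1-n-1$, so the product has dimension $C+m-1=\dim\partial\cell(a)$, codimension~$0$ as it must be.)

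Second, your claim that ``everything is done $t$-parametrically in no essential way here'' is exactly backwards. The preliminary embedding $\Gamma'_{x,b\times a,y}$ automatically matches $\Gamma_{a,b}$ on the piece $(\mathcal{M}(y,b)\times\mathcal{M}(x,y))\times\mathcal{M}(a,y)$ because $\Psi_1$ is the identity there, but it \emph{disagrees} on $\mathcal{M}(x,b)\times(\mathcal{M}(x,y)\times\mathcal{M}(a,x))$: the image of $\Gamma'_{x,b\times a,y}$ on that piece lies in $\cell_y(a)$, whereas $\Gamma_{a,b}$ sends the corresponding boundary $\mathcal{M}(x,b)\times\mathcal{M}(a,x)$ into $\cell_x(a)$. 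These are disjoint regions of $\partial\cell(a)$, so an ``ambient isotopy supported near these boundary faces'' cannot join them. The paper fixes this by adjoining a collar $\mathcal{M}(x,b)\times\mathcal{M}(a,x)\times[0,1]\times\cell(b)$ and defining $\Gamma_{x,b\times a,y}$ on it via $\Psi_t$, so that as $t$ runs from $0$ to $1$ the collar traces a path inside $\cell_x(a)$ from the $\Gamma_{a,b}$-embedding to the $\Gamma'_{x,b\times a,y}$-embedding. Thus the $t$-parameter is precisely what makes the boundary matching work.
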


\begin{proof}
Consider each $\mathcal{M}(x,b)$ embedded into Euclidean space as
\begin{align*}
\imath_{x,b}: \mathcal{M}(x,b) & \times [-\varepsilon, \varepsilon]^{d_n + \cdots + d_{i-1}} \rightarrow \\
& [-R,R]^{d_n} \times [0,R] \times \cdots \times [0,R] \times [-R,R]^{d_{i-1}} {\rm .}
\end{align*}
Varying $t$ in $[0,1]$ provides an interval of framed embedded subspaces $\Psi_t |_{ \mathcal{C}_b(x)} ( \mathcal{C}_b(x) ) $ inside $\mathcal{C}(x)$, and in particular we now consider the framed embedded subspace $\Psi_1 |_{ \mathcal{C}_b(x)} (\mathcal{C}_b(x)) $ inside $\mathcal{C}_y(x)$ (see Figure \ref{fig:collapsed_to_y}). This provides an embedding of $\mathcal{M}(x,b) \times \mathcal{C}(b)$ into
\begin{align*}
& [0,R] \times [-R,R]^{d_{B}} \times \cdots \times [-R,R]^{d_{i-2}} \times \{ 0 \} \times \imath_{x,y} \big( \mathcal{M}(x,y) \times [-\varepsilon, \varepsilon]^{d_{i-1}} \big) \\
& \times \{ 0 \} \times [-\varepsilon, \varepsilon]^{d_i} \times \cdots \times \{0\} \times [-\varepsilon , \varepsilon]^{d_{A-1}} = \mathcal{C}_y(x) {\rm .}
\end{align*}
given by
\[
\Psi_1 |_{ \mathcal{C}_b(x)} \circ \Gamma_{x,b} : \mathcal{M}(x,b) \times \mathcal{C}(b) \rightarrow \mathcal{C}_y(x) {\rm .}
\]

Now, abusing notation slightly, let $\Gamma_{x,y}^{-1}:\mathcal{C}_y(x) \rightarrow \mathcal{M}(x,y) \times \mathcal{C}(y)$ be the obvious homeomorphism, so that 
\[
\Gamma_{x,y}^{-1} \circ \Psi_1 |_{ \mathcal{C}_b(x)} \circ \Gamma_{x,b} : \mathcal{M}(x,b) \times \mathcal{C}(b) \rightarrow \mathcal{C}(y)
\]
provides an embedding of $\mathcal{M}(x,b) \times \mathcal{C}(b)$ into a one-point product of $\mathcal{C}(y)$. Next, to embed the product moduli space $\mathcal{M}(x,b) \times \mathcal{M}(a,y) \times \mathcal{C}(b)$, consider the identification
\[
\Gamma_{a,y} : \mathcal{M}(a,y) \times \mathcal{C}(y) \rightarrow \mathcal{C}_y(a) \subset \partial_{i-1} \mathcal{C}(a)
\]
with $\mathcal{M}(x,b) \times \mathcal{C}(b) $ embedded into the $\mathcal{C}(y)$ component via $\Gamma_{x,y}^{-1} \circ \Psi_1 |_{ \mathcal{C}_b(x)} \circ \Gamma_{x,b}$. Then define the embedding
\begin{equation} \label{eq:product_embedding}
\Gamma^{'}_{x,b \times a,y} : \mathcal{M}(x,b) \times \mathcal{M}(a,y) \times \mathcal{C}(b) \rightarrow \partial \mathcal{C}(a)
\end{equation}
by $\Gamma^{'}_{x,b \times a,y} ( p,q, \delta) = \Gamma_{a,y} \big( q, \Gamma_{x,y}^{-1} \circ \Psi_1 |_{ \mathcal{C}_b(x)} \circ \Gamma_{x,b}(p, \delta) \big)$ (see Figure \ref{fig:cancel_pic2}).

Notice that the embeddings $\Gamma^{'}_{x,b \times a,y}$ and $\Gamma_{a,b}$ agree on $\big( \mathcal{M}(y,b) \times \mathcal{M}(x,y) \big) \times \mathcal{M}(a,y)$ since $\Psi_1$ is the identity there, but on $\mathcal{M}(x,b) \times \big( \mathcal{M}(x,y) \times \mathcal{M}(a,x) \big)$ they disagree.  We shall next fix this by adding a collar
\[
\mathcal{M}(x,b) \times \mathcal{M}(a,x) \times [0,1] \times \mathcal{C}(b)
\]
which we glue onto $\mathcal{M}(x,b) \times \mathcal{M}(a,y) \times \mathcal{C}(b)$ along $\mathcal{M}(x,b) \times \mathcal{M}(a,x) \times \{ 1 \} \times \mathcal{C}(b)$ in the obvious way.  Note of course that this results in the same space and we shall abuse notation by referring to the space \emph{with}  the collar also as $\mathcal{M}(x,b) \times \mathcal{M}(a,y) \times \mathcal{C}(b)$.

Consider the embedding
\begin{equation} \label{eq:product_embedding_with_gluing}
\Gamma_{x,b \times a,y} : \mathcal{M}(x,b) \times \mathcal{M}(a,y) \times \mathcal{C}(b) \rightarrow \partial \mathcal{C}(a)
\end{equation}
that is defined as $\Gamma^{'}_{x,b \times a,y}$ away from the collar, and defined as $\Gamma_{x,b \times a,y} (p,q,t, \delta) = \Gamma_{a,x} \big( q, \Psi_t |_{\mathcal{C}_b(x)}( \Gamma_{x,b} (p, \delta) ) \big)$ on points $(p,q,t, \delta)$ within the collar. Varying $t$ from $0$ to $1$ has the effect of tracing from the embedding $\Gamma_{a,b}$ (when $t=0$) to the embedding $\Gamma_{x,b \times a,y}^{'}$ (when $t=1$).
We need to check that the map on the collar $\mathcal{M}(x,b) \times \mathcal{M}(a,x) \times [0,1]$ does not affect the intersection with the collar neighbourhood $\mathcal{M}(y,b) \times \mathcal{M}(a,y) \times [0,1]$, which is $\mathcal{M}(y,b) \times \mathcal{M}(a,x) \times [0,1]^2$.  In fact, since $\Gamma_{x,b}$ sends the boundary subset $\mathcal{M}(y,b) \times \mathcal{M}(x,y) \times \mathcal{C}(b)$ to $\mathcal{C}_b(x) \cap \mathcal{C}_y(x) \subset \partial \mathcal{C}_y(x)$, $\Psi_t$ has no effect on this particular collar. 
Hence, $\Gamma_{x,b \times a,y}$ provides an embedding satisfying the required properties.
\end{proof}

Since the embeddings defined in the proof of Lemma \ref{embedding1} may seem a little abstract, let us consider an example.

\begin{example} \label{example_cancel_1}
Let $\Cat_{{\rm Ex}}$ be a framed flow category with ${\rm Ob}(\Cat_{{\rm Ex}}) = \{a,x,c,y,b \}$ such that $|a|=2$, $|x| = |c| = 1$, and $|y|=|b|=0$. Here is an illustration of $\Cat_{{\rm Ex}}$
\begin{center}
\begin{tikzpicture}
\node [left] (a)at (0,0) {$a$};
\node [left] (x)at (0,-1.5) {$x$};
\node [left] (y)at (0,-3) {$y$};
\node [right] (c)at (2,-1.5) {$c$};
\node [right] (b)at (2,-3) {$b$};

\draw[-] (0,0) -- (0,-1.5) node[blue, left,pos=0.5]{$p_1$};
\draw[-] (0,0) -- (2,-1.5) node[blue, right,pos=0.5]{$p_2$};
\draw[-] (2,-1.5) -- (0,-3) node[blue, right,pos=0.35]{$q_2$};
\draw[draw=white,double=black,very thick] (0,-1.5) -- (2,-3) node[blue, left,pos=0.35]{$q_1$};
\draw[-] (0,-1.5) -- (0,-3) node[blue, left,pos=0.5]{$\ast$};
\draw[-] (2,-1.5) -- (2,-3) node[blue, right,pos=0.5]{$q_3$};

\filldraw (0,0) circle (2pt)
(0,-1.5) circle (2pt)
(2,-1.5) circle (2pt)
(0,-3) circle (2pt)
(2,-3) circle (2pt);
\end{tikzpicture}
\end{center}
in which the $0$-dimensional moduli spaces are all single points that are labelled in blue.  The $1$-dimensional moduli spaces need to be given as well, and these can be drawn as
\begin{center}
\begin{tikzpicture}
\node (mod_a_b)at (0,0) {$\mathcal{M}(a,b) = $};
\node [label=below:{\color{blue}$q_1 \cdot p_1$}] (mod_a_b_1)at (1.5,0) {};
\node [label=below:{\color{blue}$q_3 \cdot p_2$}] (mod_a_b_2)at (6.5,0) {};

\node (mod_a_y)at (0,-2) {$\mathcal{M}(a,y) = $};
\node [label=below:{\color{blue}$\ast \cdot p_1$}] (mod_a_y_1)at (1.5,-2) {};
\node [label=below:{\color{blue}$q_1 \cdot p_2$}] (mod_a_y_2)at (6.5,-2) {};

\draw[|-|] (mod_a_b_1) -- (mod_a_b_2);
\draw[|-|] (mod_a_y_1) -- (mod_a_y_2);
\end{tikzpicture}
\end{center}

\noindent Now assume that $\imath$ is a neat embedding of $\Cat_{{\rm Ex}}$ relative ${\bf d} = (d_0,d_1) = (1,1)$. The cells needed to construct the CW complex $|\Cat_{{\rm Ex}}|$ are:
\begin{align*}
& \mathcal{C}(a) = [0,R] \times [-R,R] \times [0,R] \times [-R,R] \\
& \mathcal{C}(x) = [0,R] \times [-R,R] \times \{0\} \times [-\varepsilon, \varepsilon] = \mathcal{C}(c) \\
& \mathcal{C}(y) = \{0\} \times [-\varepsilon, \varepsilon] \times \{0\} \times [-\varepsilon, \varepsilon] = \mathcal{C}(b) {\rm .} 
\end{align*}
Each of these cells can be considered as a subset of $\E = \R_+ \times \R \times \R_+ \times \R$. Moreover,
\[
\partial \E = \big( \{0\} \times \R \times \R_+ \times \R \big) \bigcup \big( \R_+ \times \R \times \{ 0 \} \times \R \big)
\]
is a $3$-dimensional $\langle 2 \rangle$-manifold, which can be illustrated by flattening out the corner-plane $\{ 0 \} \times \R \times \{ 0 \} \times \R$ to give a homeomorphism $\partial \E \cong \R^3$ (c.f. \cite[Figure 3.3]{LipSarKhov}). Under this homeomorphism, the boundary $\partial \mathcal{C}(a)$ has subsets identified with certain moduli spaces that are depicted in Figure~\ref{fig:cancel_pic1}.
\begin{figure}[ht!]
  \centering
  \includegraphics[width=.6\linewidth , trim = 0 0cm 0 0, clip=true]{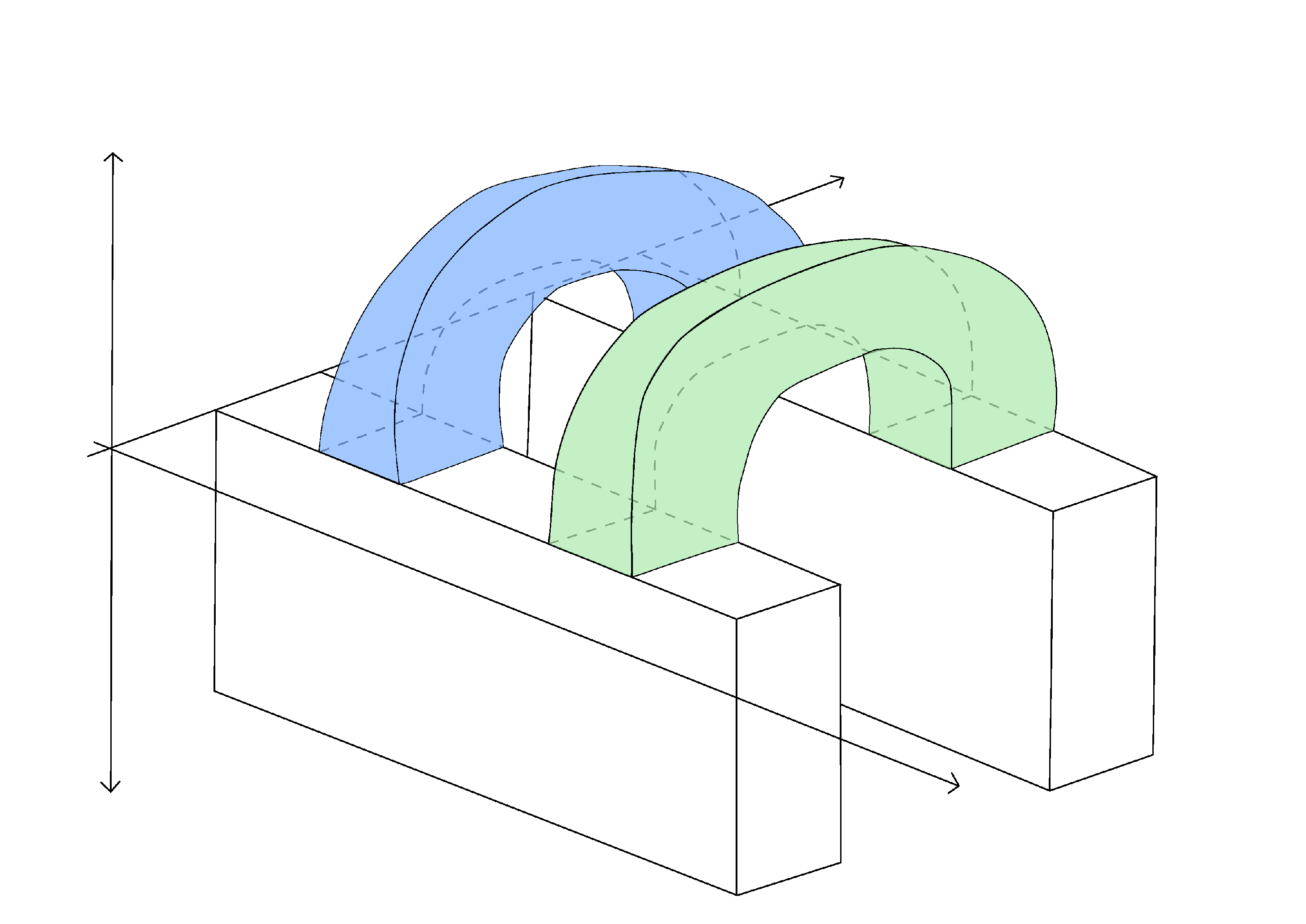}
  \caption{Example: $\Cat_{{\rm Ex}}$. Identifications of cells in $\partial \mathcal{C}(a)$.}
  \label{fig:cancel_pic1}
\end{figure}

In blue is the embedding $\Gamma_{a,b}$ and in green the embedding $\Gamma^\prime_{a,y}$ of $\mathcal{M}(a,b) \times \mathcal{C}(b)$ and $\mathcal{M}(a,y) \times \mathcal{C}(y)$, respectively. The rightmost (white) cube is $\mathcal{C}_c(a) \cong \mathcal{C}(c)$ and the leftmost (white) cube is $\mathcal{C}_x(a) \cong \mathcal{C}(x)$. The cell $\mathcal{C}(x)$ has parts of its boundary identified with both $\mathcal{C}_y(x)$ and $\mathcal{C}_b(x)$, and can be depicted on its own as in Figure \ref{fig:boundary_x}.
\begin{figure}[ht!]
  \centering
      \includegraphics[width=0.5\textwidth , trim = 0 7.5cm 0 7.5cm, clip = true]{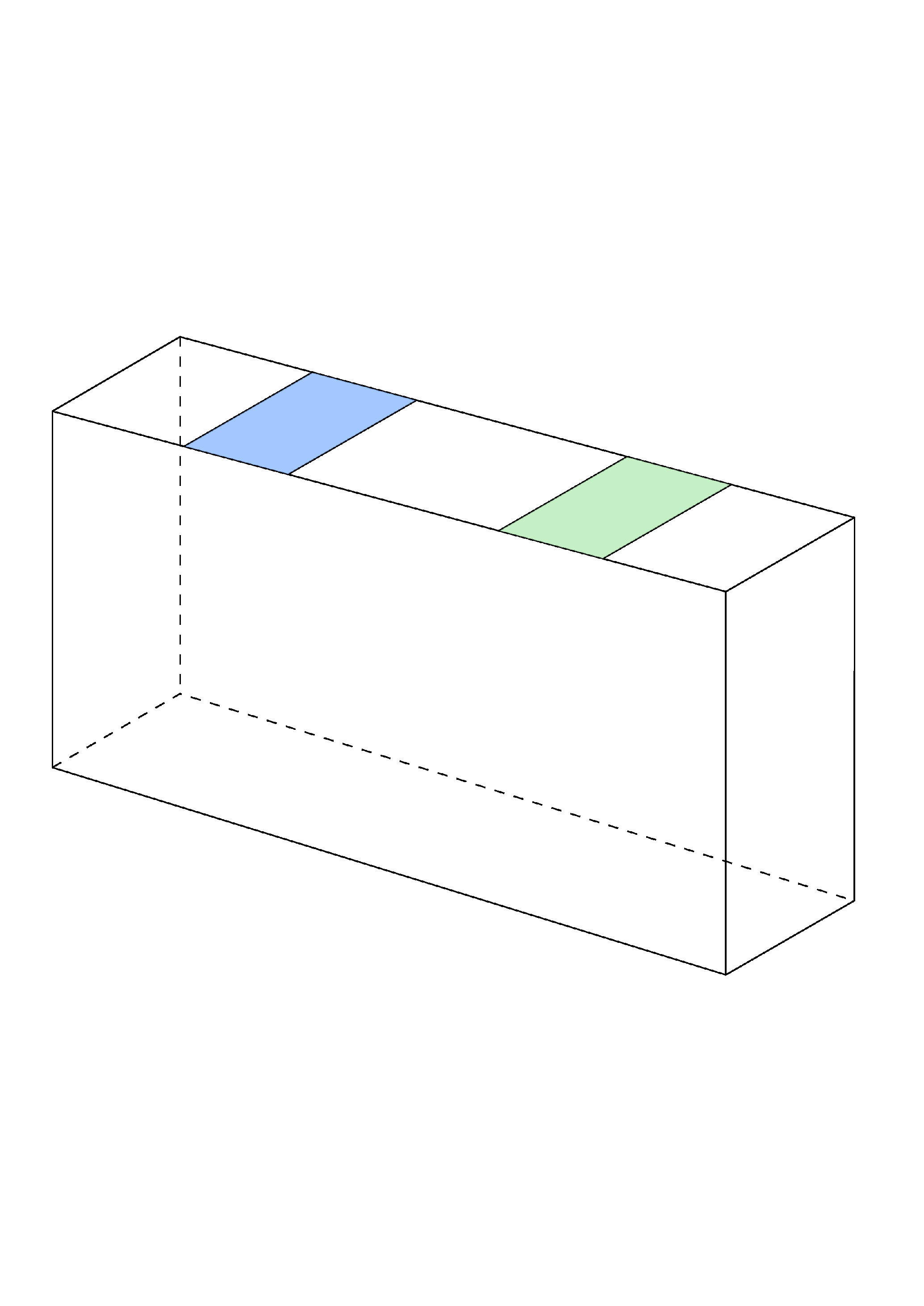}
  \caption{Example: $\Cat_{{\rm Ex}}$. The cell $\mathcal{C}(x)$.}
  \label{fig:boundary_x}
\end{figure}

The cell $\mathcal{C}(y)$ is green and the cell $\mathcal{C}(b)$ is blue. Further, the deformation $\Psi_t$ on $\mathcal{C}(x)$ (which is piecewise smooth on faces) sends $\partial \mathcal{C}(x) \setminus \mathcal{C}_y(x)$ through $\mathcal{C}(x)$ to $\mathcal{C}_y(x)$. This results in an embedding $\Gamma_{x,y}^{-1} \circ \Psi_1 |_{\mathcal{C}_b(x)} \circ \Gamma_{x,b}$ of $\mathcal{M}(x,b) \times \mathcal{C}(b)$ in the cell $\mathcal{C}_y(x) \cong \mathcal{C}(y)$. This embedding is highlighted in blue in Figure \ref{fig:collapsed_to_y}, where the images of each face are outlined.
\begin{figure}[ht!]
  \centering
      \includegraphics[width=0.5\textwidth , trim = 0 7.5cm 0 7.5cm, clip = true]{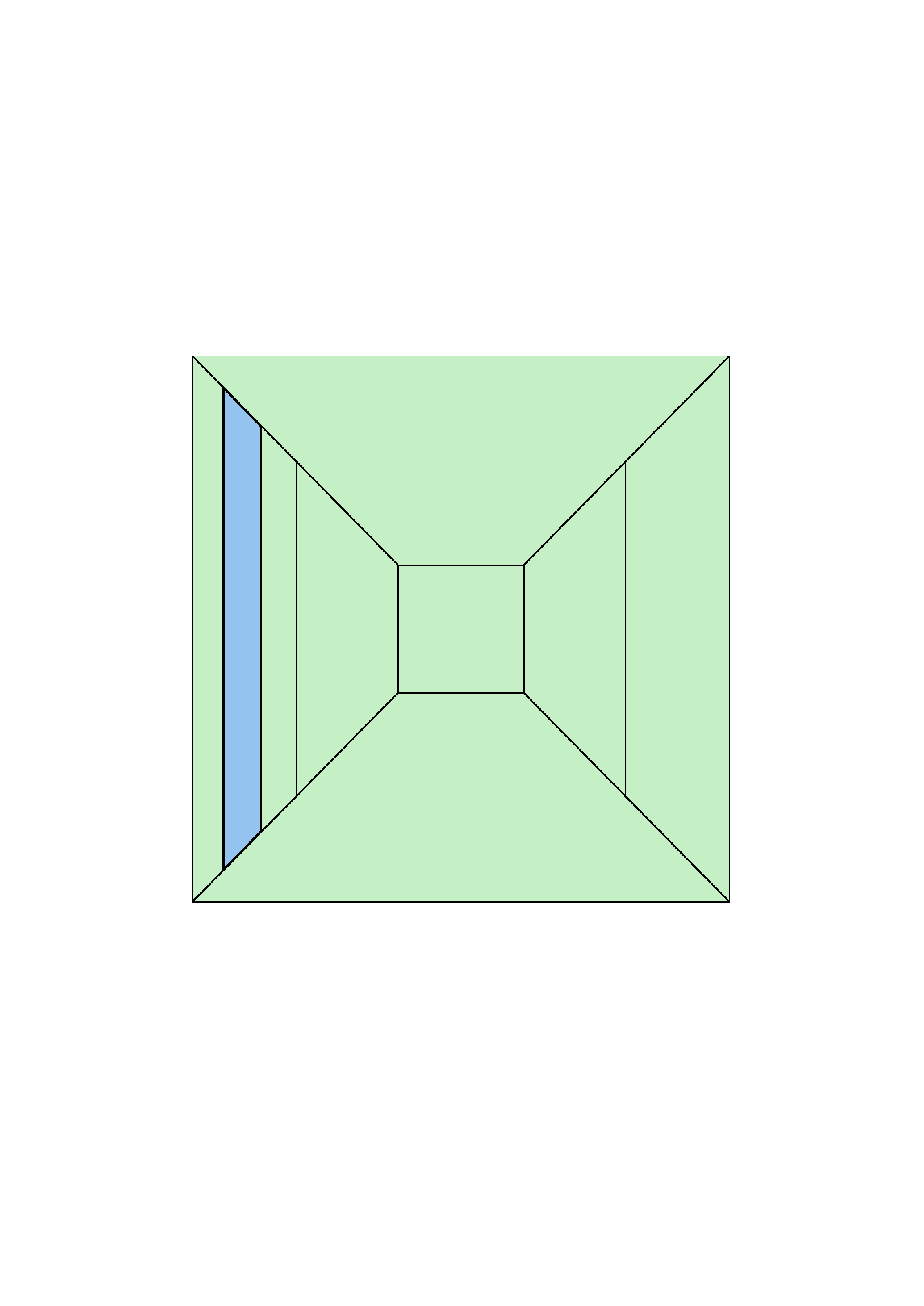}
  \caption{Example: $\Cat_{{\rm Ex}}$. The result of collapsing $\mathcal{C}(x)$ using $\Psi_1$.}
  \label{fig:collapsed_to_y}
\end{figure}

Now recall that the embedding $\Gamma_{x,b \times a,y}^\prime$ is defined in Equation \ref{eq:product_embedding} as the embedding $\Gamma_{a,y}$ of $\mathcal{M}(a,y) \times \mathcal{C}(y)$ with $\mathcal{C}_b(x)$ embedded into $\mathcal{C}(y)$ as above. This embedding, together with the embedding $\Gamma_{a,b}$ is depicted in Figure \ref{fig:cancel_pic2}.
\begin{figure}[ht!]
  \centering
  \includegraphics[width=.6\linewidth , trim = 0 0cm 0 0, clip=true]{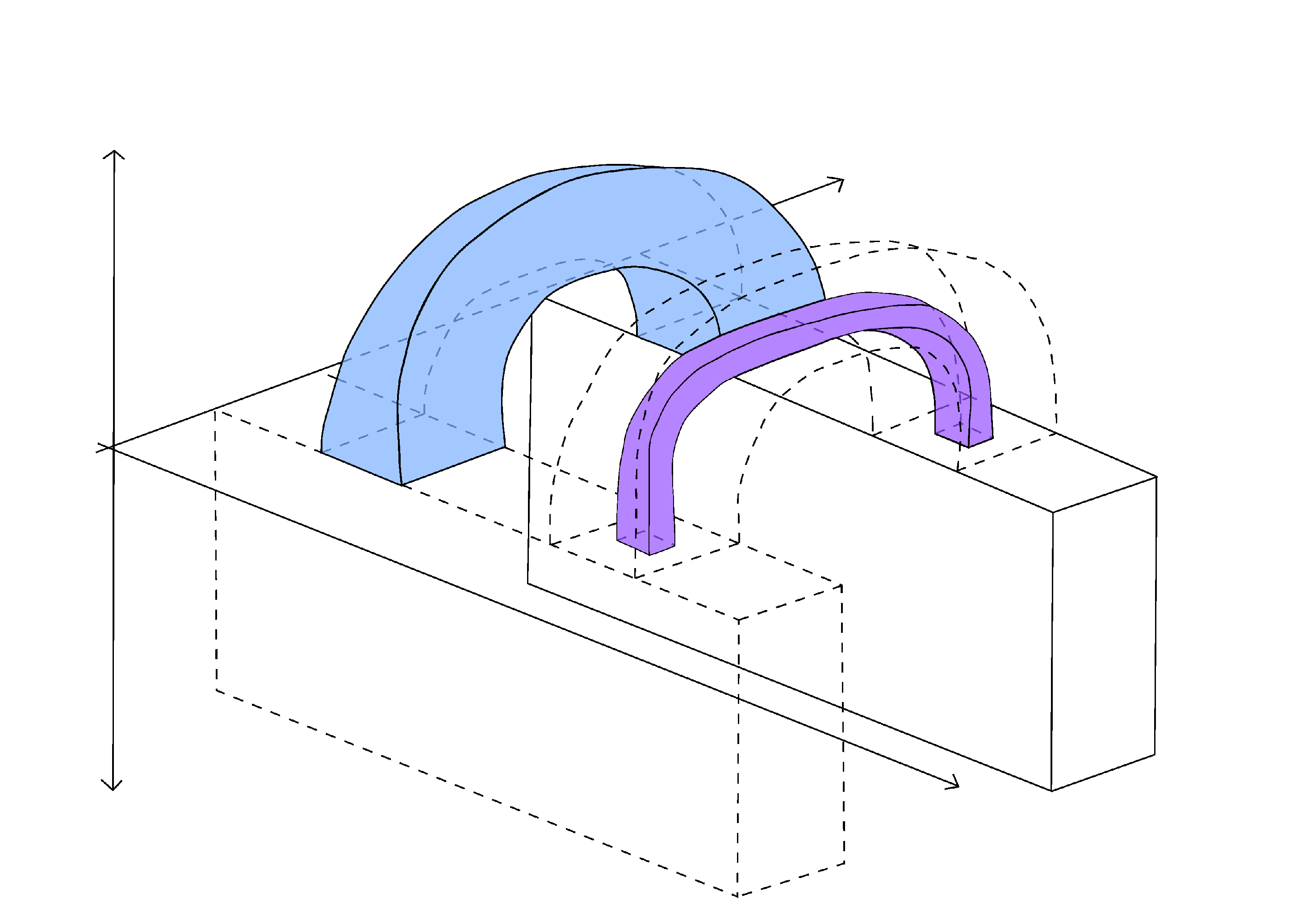}
  \caption{Example: $\Cat_{{\rm Ex}}$. The embedding $\Gamma_{a,b}$ (in blue) and $\Gamma^\prime_{x,b \times a,y}$ (in purple).}
  \label{fig:cancel_pic2}
\end{figure}

\noindent The cells $\mathcal{C}_x(a)$ and $\mathcal{C}_y(a)$ are indicated by dashed lines since they correspond to the objects that are being cancelled. The embedding $\Gamma_{a,b}$ is highlighted blue as before, and the embedding $\Gamma_{x,b \times a,y}^\prime$ is highlighted purple. Notice that the two framed intervals do not agree on their boundaries corresponding to
\[
\mathcal{M}(x,b) \times \mathcal{M}(a,x) {\rm ~ and ~ } \mathcal{M}(x,b) \times \big( \mathcal{M}(x,y) \times \mathcal{M}(a,x) \big) {\rm .}
\]
This is the purpose of the alteration of $\tilde{\Gamma}_{x,b \times a,y}$ in the proof of Lemma \ref{embedding1}. The embedding $\Gamma_{x,b \times a,y}$ is defined in Equation \ref{eq:product_embedding_with_gluing} by altering the embedding $\Gamma_{x,y \times a,y}^\prime$ in a collar neighbourhood of $\mathcal{M}(x,b) \times \mathcal{M}(a,y) \times \mathcal{C}(b)$. The alteration uses the deformation $\Psi_t$ and takes place inside $\mathcal{C}_x(a)$; it is highlighted red in Figure \ref{fig:cancel_pic3}.
\begin{figure}[ht!]
  \centering
  \includegraphics[width=.6\linewidth , trim = 0 0cm 0 0, clip=true]{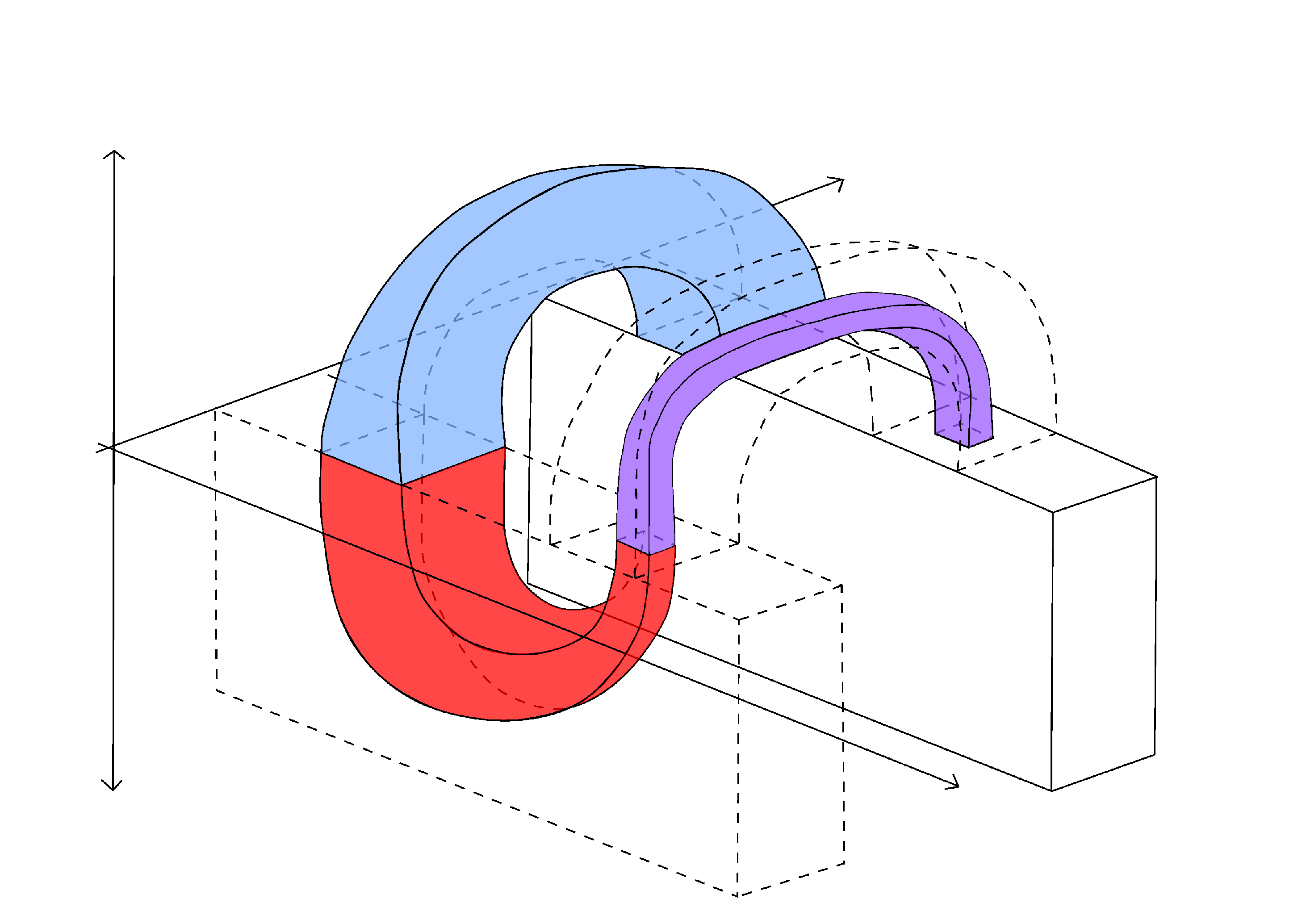}
  \caption{Example: $\Cat_{{\rm Ex}}$. The embeddings $\Gamma_{a,b}$ and $\Gamma_{x,b \times a,y}$.}
  \label{fig:cancel_pic3}
\end{figure}

In this figure, the embedding $\Gamma_{x,b \times a,y}$ of $\mathcal{M}(x,b) \times \mathcal{M}(a,y) \times \mathcal{C}(b)$ is depicted as the concatenation of both the red and purple framed embedded intervals.
\end{example}

\begin{lemma} \label{embedding2}
There is an embedding $\Gamma_{\bar{a}, \bar{b}} : \mathcal{M}(\bar{a},\bar{b}) \times \mathcal{C}(b) \rightarrow \partial \mathcal{C}(a)$.
\end{lemma}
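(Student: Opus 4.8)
The plan is to obtain $\Gamma_{\bar a,\bar b}$ simply by gluing together, along their common boundary, the inclusion $\Gamma_{a,b}$ and the embedding $\Gamma_{x,b\times a,y}$ produced in Lemma \ref{embedding1}. Multiplying the moduli-space formula of Definition \ref{cancelledcat} by $\cell(b)$, and using $\M(x,y)=\ast$ to identify the various boundary subsets, one has
\[
\M(\bar a,\bar b)\times\cell(b)\;\cong\;\bigl(\M(a,b)\times\cell(b)\bigr)\ \cup\ \bigl(\M(x,b)\times\M(a,y)\times\cell(b)\bigr),
\]
the two pieces being identified along $\M(x,b)\times\M(a,x)\times\cell(b)\,\cup\,\M(y,b)\times\M(a,y)\times\cell(b)$, where the right-hand piece is understood to carry the auxiliary collar $\M(x,b)\times\M(a,x)\times[0,1]\times\cell(b)$ introduced during the proof of Lemma \ref{embedding1}. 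I would set $\Gamma_{\bar a,\bar b}$ to be $\Gamma_{a,b}$ on the first piece and $\Gamma_{x,b\times a,y}$ on the second. The concluding assertion of Lemma \ref{embedding1} is exactly that these two maps agree on the common boundary, so this prescription yields a well-defined continuous map $\M(\bar a,\bar b)\times\cell(b)\to\partial\cell(a)$.

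Since $\M(\bar a,\bar b)\times\cell(b)$ is compact and $\partial\cell(a)$ is Hausdorff, a continuous injection from the former to the latter is automatically a topological embedding, which is all the lemma claims; so the task reduces to checking that $\Gamma_{\bar a,\bar b}$ is injective. (Upgrading this to a \emph{neat} embedding --- in particular controlling the behaviour at corners of codimension $\ge 2$ and smoothing near the gluing region --- is the business of Lemma \ref{embedding3}.)

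For injectivity I would split the source into three regions and locate their images among the faces $\cell_b(a)$, $\cell_y(a)$, $\cell_x(a)$ of $\partial\cell(a)$: the unmodified piece $\M(a,b)\times\cell(b)$ is carried by $\Gamma_{a,b}$ into $\cell_b(a)$; the non-collar part of $\M(x,b)\times\M(a,y)\times\cell(b)$ is carried by $\Gamma'_{x,b\times a,y}$ into $\cell_y(a)$; and the collar is carried into $\cell_x(a)$, its $t$-slices sweeping, by means of the maps $\Psi_t$, from the gluing locus $\M(x,b)\times\M(a,x)\times\cell(b)$ (sitting in $\cell_b(a)\cap\cell_x(a)$, where the collar agrees with $\Gamma_{a,b}$) at $t=0$ across to $\cell_x(a)\cap\cell_y(a)$ at $t=1$, where it joins the non-collar part. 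On each region separately the map is injective --- on $\M(a,b)\times\cell(b)$ and on the non-collar part because $\Gamma_{a,b}$ and $\Gamma'_{x,b\times a,y}$ are embeddings, and on the collar because each $\Psi_t$ with $t<1$ is an embedding of $\partial\cell(x)\setminus{\rm int}(\cell_y(x))$ while $\Psi_1$ restricts to a homeomorphism onto $\cell_y(x)$; the only place where slices for distinct $t$ could get identified is over $\M(y,b)\subset\M(x,b)$, where $\Gamma_{x,b}$ meets $\partial\cell_y(x)$, but there $\Gamma_{x,b\times a,y}$ was already arranged to coincide with $\Gamma_{a,b}$. Finally, the three images meet one another only along the images of the two gluing loci $\M(x,b)\times\M(a,x)\times\cell(b)$ and $\M(y,b)\times\M(a,y)\times\cell(b)$ (and along the $t=1$ seam interior to $\cell_x(a)$), and on these the two sides are identified precisely by the gluing in Definition \ref{cancelledcat}. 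Hence $\Gamma_{\bar a,\bar b}$ is injective, completing the argument.

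The step I expect to be the main obstacle is this last piece of bookkeeping: determining precisely which portions of $\cell_b(a)$, $\cell_y(a)$ and $\cell_x(a)$ are occupied by the three regions, so as to certify that the collar, after being pushed by $\Psi_t$, stays inside $\cell_x(a)$ and touches $\cell_b(a)\cup\cell_y(a)$ only along the intended gluing loci, and that the degeneracy of $\Psi_t$ over $\partial\cell_y(x)$ creates no spurious identifications. This is exactly the situation that the collar in Lemma \ref{embedding1} is designed to handle, and the example worked out in Figures \ref{fig:cancel_pic2} and \ref{fig:cancel_pic3} should serve as the template for the general argument; the passage from a topological embedding to a neat one is then deferred to Lemma \ref{embedding3}.
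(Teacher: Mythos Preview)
Your proposal is correct and follows essentially the same approach as the paper: define $\Gamma_{\bar a,\bar b}$ by gluing $\Gamma_{a,b}$ and $\Gamma_{x,b\times a,y}$ along the locus where Lemma~\ref{embedding1} guarantees they agree, and defer neatness to Lemma~\ref{embedding3}. The paper's proof is in fact terser than yours---it simply cites Definition~\ref{cancelledcat} and Lemma~\ref{embedding1} without spelling out the injectivity bookkeeping you supply---so your additional care in tracking the three regions and their images in $\cell_b(a)$, $\cell_y(a)$, $\cell_x(a)$ is a welcome elaboration rather than a departure.
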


\begin{proof}
Since $\mathcal{M}(\bar{a},\bar{b}) = \mathcal{M}(a,b)\cup_f \mathcal{M}(x,b)\times \mathcal{M}(a,y)$ (Definition \ref{cancelledcat}), we may use $\Gamma_{a,b}$ and $\Gamma_{x,b \times a,y}$ to embed $\mathcal{M}(\bar{a},\bar{b})$ into $\partial \mathcal{C}(a)$. The fact that these embeddings agree on the gluing $f$ given in Definition \ref{cancelledcat} was shown in Lemma \ref{embedding1}.
\end{proof}

\begin{example} \label{example_cancel_2}
Consider again the framed flow category $\Cat_{{\rm Ex}}$ from Example \ref{example_cancel_1}. The embeddings $\Gamma_{a,b}$ and $\Gamma_{x,b \times a,y}$ are depicted in Figure \ref{fig:cancel_pic3}.  The embedding $\Gamma_{x,b \times a,y}$ agrees with $\Gamma_{a,b}$ on
\[
( \mathcal{M}(x,b) \times \mathcal{M}(a,x) ) \times \mathcal{C}(b) \subset \partial (\mathcal{M}(a,b) \times \mathcal{C}(b)) {\rm .}
\]

\end{example}

Let
\[
\Pi[b:a] : \mathcal{C}(a) \rightarrow \E_{\bf d}[b:a]
\]
denote the projection map, and let
\[
\Xi[b:a] : [-\varepsilon , \varepsilon]^{d_n + \cdots + d_{m-1}} \rightarrow \cell(b)
\]
be the inclusion which takes value $0$ on all omitted coordinates of $\cell(b)$.  Observe that we can recover the embeddings
\[
\imath_{a,b} : \mathcal{M}(a,b) \times [-\varepsilon, \varepsilon]^{d_n + \cdots + d_{m-1}} \rightarrow \E_{\bf d}[b:a]
\]
as the composition
\[
\imath_{a,b} = \Pi[b:a] \circ \Gamma_{a,b} \circ ({\rm id}_{\M(a,b)} \times \Xi[b:a]) {\rm .}
\]
One might hope to be able to define embeddings of the moduli spaces $\mathcal{M}(\bar{a},\bar{b})$ in a similar way.
However, these embeddings would not obviously be \emph{neat embeddings} since boundary points of both $\mathcal{M}(a,b)$ and $\mathcal{M}(x,b) \times \mathcal{M}(a,y)$ that become interior points of $\mathcal{M}(\bar{a},\bar{b})$ would need to be identified with part of the interior of $\partial_n \mathcal{C}(a)$. This is the case in Examples \ref{example_cancel_1} and \ref{example_cancel_2}, where the embedding $\Gamma_{\bar{a}, \bar{b}}$ protrudes into the $1$-face of $\mathcal{C}(a)$ (the red framed interval in Figure \ref{fig:cancel_pic3}).  Instead, we alter these embeddings slightly as outlined in the proof of the following lemma.

\begin{lemma} \label{embedding3}
For each $\bar{a},\bar{b}$ in $ {\rm Ob}(\Cat_H)$, there are framed neat embeddings
\[
\bar{\imath}_{\bar{a},\bar{b}} : \mathcal{M}(\bar{a},\bar{b}) \times [-\varepsilon, \varepsilon]^{d_{n}+ \cdots + d_{m-1}} \rightarrow \E_{\bf d}[b:a]
\]
\end{lemma}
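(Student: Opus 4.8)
The plan is to produce the neat embeddings $\bar\imath_{\bar a,\bar b}$ by first using the (non-neat) embeddings $\Gamma_{\bar a,\bar b}$ of Lemma \ref{embedding2} and then pushing the offending part of the image off the interior of the $(i-1)$-face of $\cell(a)$ and back to the appropriate lower boundary face. The source of the problem is explicit: the piece of $\M(\bar a,\bar b)$ coming from $\M(x,b)\times \M(a,y)$ (with its collar correction) is carried by $\Gamma_{x,b\times a,y}$ partly into $\cell_y(a)\subset\partial_{i-1}\cell(a)$ and, because of the $\Psi_t$-correction on the collar, also into the interior of $\cell_x(a)$. As Definition \ref{cancelledcat} arranges, on the glued category there is no object of grading $i-1$ or $i$ any more in $\Cat_H$, so the corners of $\E^{\bf d}[b:a]$ that were "used" by $x$ and $y$ must be vacated; i.e.\ the image must be isotoped into $\partial_j\E^{\bf d}$ only for those $j$ corresponding to objects $\bar z\in\Ob(\Cat_H)$ lying strictly between $\bar b$ and $\bar a$ in grading.

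First I would record precisely where $\Gamma_{\bar a,\bar b}$ fails neatness: a point $p$ of $\M(\bar a,\bar b)$ that is a \emph{boundary} point in the sense of Definition \ref{cancelledcat} (coming from the composition $\M(\bar z,\bar b)\times\M(\bar a,\bar z)$ for some $\bar z\in\Ob(\Cat_H)$) must be sent into the corresponding $\partial_{?}\cell(a)$, whereas the \emph{interior} points of $\M(\bar a,\bar b)$ that arose from gluing along faces $\M(x,b)\times\M(a,x)$ and $\M(y,b)\times\M(a,y)$ must be pushed into the interior of $\cell(a)$ away from $\partial_{i-1}\cell(a)$ and from $\cell_x(a)$. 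Using the collar-neighbourhood structure of $\langle n\rangle$-manifolds (\cite[Lemma 2.1.6]{Laures}), choose a collar of $\partial_{i-1}\cell(a)$ inside $\cell(a)$ with $[0,\infty)$-coordinate the relevant factor of $\E^{\bf d}$. The composition $\Psi_1|_{\cell_b(x)}\circ\Gamma_{x,b}$ already slides $\cell_b(x)$ to lie inside $\cell_y(x)\subset\partial_{i-1}\cell(x)$, so the image of $\Gamma_{x,b\times a,y}$ sits at $[0,\infty)$-coordinate $0$ for both the $(i-1)$-factor and the $i$-factor; to regain neatness I would then apply a further push — an isotopy, supported in a collar of $\partial_{i-1}\cell(a)$ and of $\cell_x(a)$, that moves these $0$-coordinates to a small positive value $\delta>0$ while keeping the genuine boundary subsets $\M(\bar z,\bar b)\times\M(\bar a,\bar z)$ exactly on their faces. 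Concretely one interpolates: near the genuine faces the push is the identity (matching $\Gamma_{a,b}$, which is already neat), and on the gluing collars one transports the image transversely into the interior, using the product framing to carry the framing along. The resulting $\bar\imath_{\bar a,\bar b}$ is then, by construction, orthogonal to exactly the faces $\partial_J\E^{\bf d}$ indexed by compositions with objects of $\Cat_H$, which is the neatness requirement of Definition \ref{def:corner_man}.

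Two compatibility checks finish it. First, the pushes must be compatible with the composition maps of $\Cat_H$: if $\bar c\in\Ob(\Cat_H)$ with $n<|c|<m$, the restriction of $\bar\imath_{\bar a,\bar b}$ to $\M(\bar c,\bar b)\times\M(\bar a,\bar c)$ must equal $(\bar\imath_{\bar c,\bar b},0,\bar\imath_{\bar a,\bar c})$; this is arranged by performing the pushes coherently (inductively on $|a|-|b|$, so that the push on $\M(\bar a,\bar b)$ restricts on each composition face to the already-constructed pushes on the factors) — here the fact noted at the end of the proof of Lemma \ref{embedding1}, that $\Psi_t$ has no effect on the collar $\M(y,b)\times\M(a,x)\times[0,1]^2$, is what makes the two corrections (along $x$ and along $y$) commute. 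Second, one recovers the immersions $\imath_{\bar a,\bar b}=\Pi[b:a]\circ\bar\imath_{\bar a,\bar b}\circ(\id\times\Xi[b:a])$ and checks they are still embeddings after the push; since the push is a small isotopy transverse to the deleted faces, this is automatic for $\delta$ small. I expect the main obstacle to be the coherence of the pushes across all the composition faces simultaneously — i.e.\ organising the isotopy so that the corrections coming from the collar over $\M(x,b)\times\M(a,x)$ and the one over $\M(y,b)\times\M(a,y)$, together with all the higher composition strata, can be realised by a single ambient isotopy of $\partial\cell(a)$ rather than face-by-face ad hoc modifications; the inductive set-up over codimension of strata, combined with the product-framing normalisations already fixed before Definition \ref{def:CH+-embedframe}, should make this go through.
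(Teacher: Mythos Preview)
Your proposal is correct in spirit and identifies the right obstruction, but the paper takes a somewhat different route that is worth contrasting.

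You propose to keep the cornered structure on $\partial\cell(a)$ fixed and to \emph{isotope the embedding} $\Gamma_{\bar a,\bar b}$ off the faces $\partial_{i-1}\cell(a)$ and $\cell_x(a)$ by pushing the offending $[0,\infty)$-coordinates to a small $\delta>0$, organising coherence over composition strata by induction on $|a|-|b|$. The paper instead keeps the embedding fixed and \emph{redefines the face structure} of $\partial\cell(a)$: it constructs a homotopy of homeomorphisms $h_t\colon\partial\cell(a)\to\partial\cell(a)$, smooth away from corners, with $h_0=\id$, and declares $\cell(a)^t$ to be $\partial\cell(a)$ with the face structure pulled back along $h_t$. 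The new face structure $\cell(a)^1$ is described explicitly in terms of $\Psi_t$ itself: a point $p\in\im(\Gamma_{a,x})$ (or $\im(\Gamma_{a,y})$) lies in the $k$-face of $\cell(a)^1$ iff its projection $\wt p\in\cell(x)$ (resp.\ $\cell(y)$) is hit by $\Psi_t(q)$ for some $q$ in $\partial_k\cell(x)$. The neat embedding is then simply $\Pi[b:a]\circ h_1\circ(\Gamma_{a,b}\cup\Gamma_{x,b\times a,y})\circ(\id\times\Xi[b:a])$. A height function $f\colon\partial\cell(x)\to[0,1]$ with $f^{-1}(0)=\cell_y(x)$ is used to exhibit the intermediate face structures $\cell(a)^s$ and hence the existence of $h_s$.

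The two approaches are essentially dual---you move the source, the paper moves the target---and either yields the lemma. What the paper's formulation buys is that the single ambient homeomorphism $h_1$ is built once from $\Psi_t$ and works uniformly for every $b$; the coherence across composition faces that you flag as the ``main obstacle'' is then automatic, because the new face structure on $\cell(a)^1$ is defined intrinsically from the face structure on $\cell(x)$ and the deformation $\Psi$, rather than stratum-by-stratum. Your inductive scheme would also go through, but would require more bookkeeping to verify that the separately constructed pushes on $\M(\bar c,\bar b)$ and $\M(\bar a,\bar c)$ assemble to the push on $\M(\bar a,\bar b)$.
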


\begin{proof}
We shall give a homotopy of homeomorphisms $h_t : \partial \cell(a) \rightarrow \partial \cell(a)$ for $0 \leq t \leq 1$, which are smooth away from the corners of $\cell(a)$.  The purpose of this homotopy is to move from $h_0 = {\rm id}_{\cell(a)}$ to the map $h_1$ which redefines the cornered structure of $\cell(a)$ in a useful way.  In particular, if one pulls back the face structure of $\partial \cell(a)$ through $h_t$ one will obtain a cornered manifold $\cell(a)^t$ isomorphic to $\cell(a) = \cell(a)^0$.  The homotopy $h_t$ will be chosen so that the embedding $h_1 \circ (\Gamma_{a,b} \cup \Gamma_{x,b \times a,y})$ will be neat.

We shall define the embedding $\bar{\imath}_{\bar{a},\bar{b}}$ as the embedding
\[
\Pi[b:a] \circ h_1 \circ (\Gamma_{a,b} \cup \Gamma_{x,b \times a,y}) \circ(({\rm id}_{\M(a,b)} \cup {\rm id}_{\M(x,b) \times \M(a,y)}) \times \Xi[a:b]) {\rm .}
\]
Let us begin by defining the face structure of $\partial \cell(a)^1$.  It will be enough, for each point of $\partial \cell(a)^1$ to know whether or not it lies in the closure of the $k$-face for each $k < m$.

If $p \in \im(\Gamma_{a,x})$ (respectively $p \in \im(\Gamma_{a,y})$) then, since $\Gamma_{a,x}$ (resp.~$\Gamma_{a,y}$) is injective, we have that $\Gamma_{a,x}^{-1}(p)$ is a well-defined point of $\M(a,x) \times \cell(x)$ (resp.~$\Gamma_{a,y}^{-1}(p)$ is a well-defined point of $\M(a,y) \times \cell(y)$).  Projecting to the second coordinate gives a point $\wt{p} \in \cell(x)$ (resp.~$\wt{p} \in \cell(y)$).

If there is a point $q$ in the $k$-face $\partial_k \cell(x)$ such that $\Psi_t(q) = \wt{p}$ for some $0 \leq t \leq 1$ (resp.~such that $\Psi_1(q) = \wt{p}$) then we have that $p$ is in the $k$-face of $\cell(a)^1$.  For all other $p \in \im(\Gamma_{a,x})$ we have that $p$ is in the $k$-face of $\cell(a)^1$ iff it is in the $k$-face of $\cell(a)^0$.

That the embedding $\Gamma_{a,b} \cup \Gamma_{x,b \times a,y}$ is neat with respect to the face structure of $\cell(a)^1$ is clear from the definition of $\Gamma_{a,b} \cup \Gamma_{x,b \times a,y}$.

Finally we wish to see that we can realize the face structure of $\cell(a)^1$ as the pullback of the face structure of $\cell(a)^0 = \cell(a)$ through $h_1$ for some homotopy $h_s$.  We shall give the corresponding face structures $\cell(a)^s$, from which it will be clear that such an $h_s$ exists.

Let $f : \partial \cell(x) \rightarrow [0,1]$ be an injective height function smooth where it makes sense on the boundaries of $\cell(x)$, such that $f^{-1}(1)$ is a point, $f^{-1}(0) = \cell_y(x)$, $f^{-1}(s)$ is homeomorphic to $\partial \cell_y(x)$ for $0<s<1$, and the intersection of $f^{-1}[0,s]$ with any face of $\cell(x)$ is homeomorphic to a disc.
Now we use the same notation as used above when we were giving the face structure of $\cell(a)^1$.  If there is a point $q$ in the $k$-face $\partial_k \cell(x)$ with $f(q) \leq s$ such that $\Psi_t(q) = \wt{p}$ for some $0 \leq t \leq 1$ (resp.~such that $\Psi_1(q) = \wt{p}$) then we have that $p$ is in the $k$-face of $\cell(a)^s$.  For all other $p \in \im(\Gamma_{a,x})$ we have that $p$ is in the $k$-face of $\cell(a)^s$ iff it is in the $k$-face of $\cell(a)^0$.
\end{proof}

\begin{example}
Consider the recurring example of the framed flow category $\Cat_{\rm Ex}$. The embedding $\Gamma_{\bar{a},\bar{b}}$, as seen in Figure \ref{fig:cancel_pic3}, embeds $\mathcal{M}(\bar{a},\bar{b}) \times \mathcal{C}(b)$ in $\partial \mathcal{C}(a)$. However, the embedding protrudes into the $1$-face of $\partial \mathcal{C}(a)$.  Redefining the face structure of $\cell(a)$ as in Lemma \ref{embedding3} will push this interval into the $0$-face only, and the result is illustrated in Figure \ref{fig:cancel_pic4}.

\begin{figure}[ht!]
  \centering
  \includegraphics[width=.6\linewidth , trim = 0 0cm 0 0, clip=true]{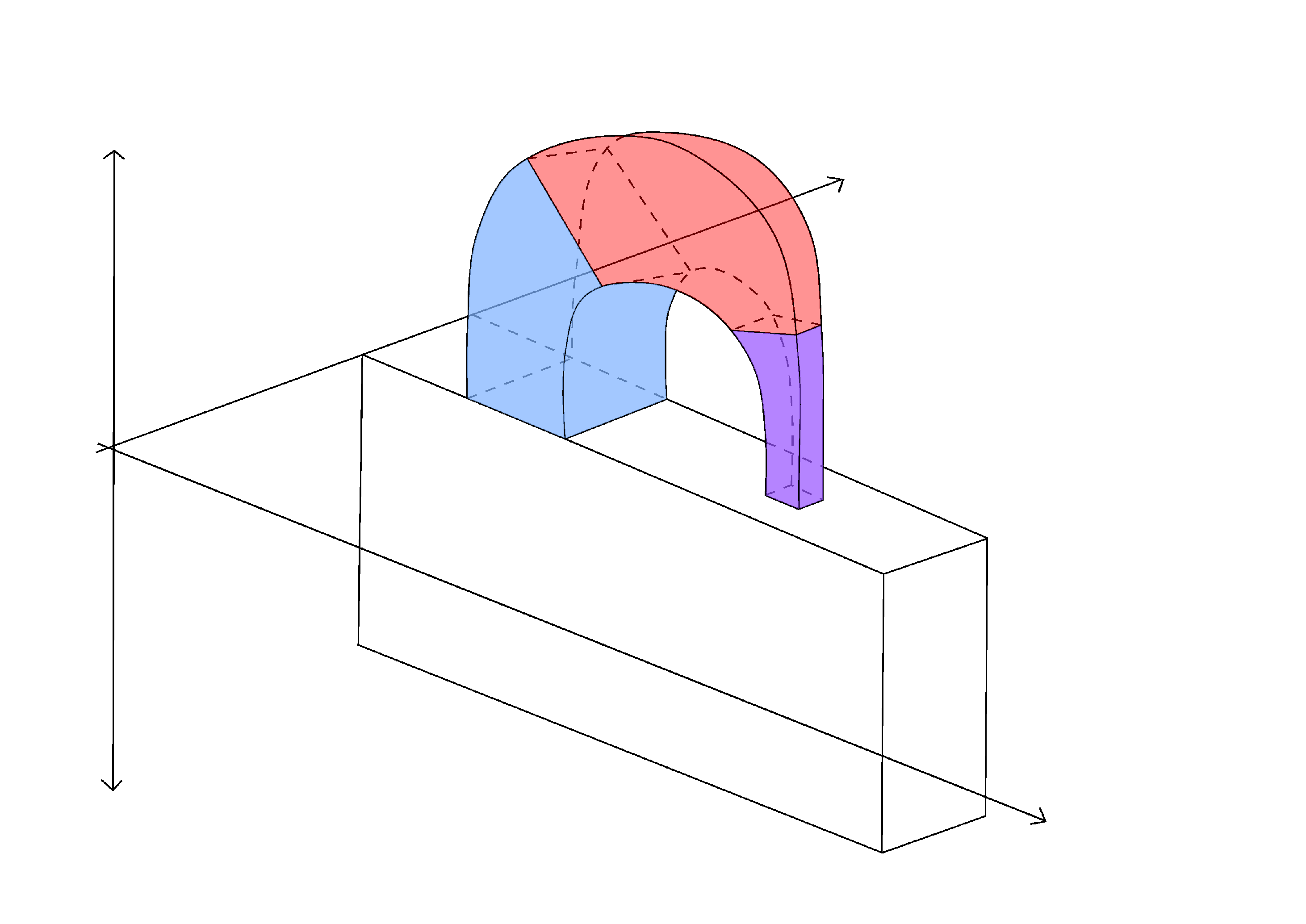}
  \caption{Example: $\Cat_{{\rm Ex}}$. The result of redefining the face structure of $\mathcal{C}(a)$.}
  \label{fig:cancel_pic4}
\end{figure}
\end{example}

So, if $(\Cat, \imath , \varphi)$ is a framed flow category containing two objects $x$ and $y$ with $\mathcal{M}(x,y) = \ast$, then there is a framed flow category $(\Cat_H , \bar{\imath} , \bar{\varphi})$ whose objects and moduli spaces are given in Definition \ref{cancelledcat}.  We are now in a position to state and prove the main theorem of this subsection.

\begin{theorem} \label{cancelthm}
Let $(\Cat, \imath , \varphi)$ be a framed flow category containing two objects $x$ and $y$ with $\mathcal{M}(x,y) = \ast$. The realisation $| \Cat |$ is stably homotopy equivalent to the realisation $| \Cat_H |$ of the cancelled category.
\end{theorem}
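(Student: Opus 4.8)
The plan is to run the argument of Proposition~\ref{CH+-=C} and Theorem~\ref{thm:whitney_equivalence} in this setting: I will build a space $Z$ carrying a $[0,1]$--coordinate whose slice over $0$ is the realisation $|\cC|$, whose slice over $1$ is $|\cC_H|$, and which deformation retracts onto each of these two slices. The new phenomenon here is that $\cC_H$ has the two fewer objects $x$ and $y$, so the cells $\cell(x)$ and $\cell(y)$ must be absorbed as $t\to1$; the tools for this are precisely the push map $\Psi_t$ of Definition~\ref{pushmap} and the identification of $\cell(x)$ with the mapping cylinder $A_f$ introduced just before it.

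For each surviving object $a\in\Ob(\cC)\setminus\{x,y\}$ I will produce a continuous family of attaching maps $F_a\colon[0,1]\times\partial\cell(a)\to Z^{|a|-1}$ with $F_{a,0}=f_a$, the attaching map of $\cell(a)$ in $|\cC|$, and $F_{a,1}=f_{\bar a}$, the attaching map of $\cell(\bar a)$ in $|\cC_H|$, i.e. the Thom--collapse map of the neat embeddings $\Gamma_{\bar a,\bar b}$ of Lemmas~\ref{embedding2} and~\ref{embedding3}. Concretely, $F_{a,t}$ is obtained from $f_a$ by deforming its restriction to chosen collar neighbourhoods of the subsets $\cell_x(a)\cong\M(a,x)\times\cell(x)$ and $\cell_y(a)\cong\M(a,y)\times\cell(y)$ of $\partial\cell(a)$, and of their common boundary strata $\M(a,x)\times\cell_b(x)$ and $\M(a,y)\times\cell_b(y)$, using $\Psi_t$ together with the interpolating face--restructuring homotopy $h_s$ of Lemma~\ref{embedding3}, in exactly the way $\Gamma_{x,b\times a,y}$ was obtained in the proof of Lemma~\ref{embedding1} by sweeping $\Gamma_{a,b}$ to $\Gamma'_{x,b \times a,y}$ through $\Psi_t$ inside a collar and then correcting the face structure. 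As in Proposition~\ref{CH+-=C}, one checks that the various collars can be chosen compatibly over composable triples, so that the maps $F_a$ are continuous and glue to a well-defined $Z=\bigcup_j Z^j$ with $Z^j=Z^{j-1}\cup_{F_a}([0,1]\times\cell(a))$; to this one adjoins families $[0,1)\times\cell(x)$ and $[0,1)\times\cell(y)$ attached for $t<1$ as in $|\cC|$ but with $\Psi_t$ applied, chosen so that as $t\to1$ the pair $(\cell(x),\cell(y))$ detaches from every other cell and, through the mapping cylinder structure $A_f=\cell(x)$, collapses away.

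It then remains to observe, exactly as after Proposition~\ref{CH+-=C}, that $Z$ deformation retracts onto its slice over $0$ --- each cylinder $[0,1]\times\cell(a)$ retracting to its $\{0\}$--end and $[0,1)\times\cell(x)$, $[0,1)\times\cell(y)$ retracting to $\{0\}\times\cell(x)$, $\{0\}\times\cell(y)$ --- recovering $|\cC|$; and onto its slice over $1$ --- the cylinders retracting to their $\{1\}$--ends, while the families $[0,1)\times\cell(x)$ and $[0,1)\times\cell(y)$ together with their limiting behaviour at $t=1$ form a mapping--cylinder neighbourhood that retracts onto $|\cC_H|$, which is exactly the point of introducing $A_f=\cell(x)$ --- recovering $|\cC_H|$. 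Hence $|\cC|\simeq Z\simeq|\cC_H|$, which in particular gives the asserted stable homotopy equivalence.

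The step I expect to be the main obstacle is the bookkeeping concealed in the word ``compatibly'': one must choose the collars of $\cell_x(a)$, $\cell_y(a)$ and of their codimension--two strata inside $\partial\cell(a)$, as well as the matching collars inside $\partial\cell(x)$ that define $\Psi_t$, so that a single interpolation simultaneously carries $f_a$ to $f_{\bar a}$, is compatible with the composition maps $\circ$ of $\cC$ --- so that $F_a$ is continuous along the strata where moduli spaces break --- and is compatible with the face--restructuring of Lemma~\ref{embedding3} needed to keep the embeddings of $\cC_H$ neat. This is the same freedom already exploited in the proofs of Lemmas~\ref{embedding1}--\ref{embedding3}, and verifying that it can be arranged in full is routine but lengthy, entirely parallel to the verification behind Proposition~\ref{CH+-=C}.
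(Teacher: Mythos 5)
Your route differs from the paper's in a structural way. The paper does not build an interpolating cylinder; it factors the equivalence as $|\Cat| \simeq X \simeq |\Cat_H|$, where $X$ is the quotient
\[
X = |\cC| / \bigl( \Psi_s(p) \sim \Psi_1(p),\ p \in \partial\cell(x)\setminus\mathrm{int}(\cell_y(x)),\ 0\leq s\leq 1 \bigr),
\]
i.e.\ the explicit collapse of $\cell(x)$ (using the $A_f=\cell(x)$ identification) onto $\cell_y(x)\cong\cell(y)$. This is a homotopy equivalence for elementary CW reasons, and it produces a complex $X$ with exactly the cells of $|\Cat_H|$ whose attaching maps are, by Lemmas~\ref{embedding1} and~\ref{embedding2}, the Thom collapses of $\Gamma_{\bar a,\bar b}=\Gamma_{a,b}\cup\Gamma_{x,b\times a,y}$. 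Then Lemma~\ref{embedding3} isotopes these to the neat embeddings $\bar\imath_{\bar a,\bar b}$ inside $\partial\cell(a)$, and the Isotopy Extension Theorem promotes this to a global isotopy giving $X\simeq|\Cat_H|$. No continuous $[0,1]$-family of attaching maps, and no bookkeeping over composable triples, is required: the proof uses only two standard homotopy equivalences (quotient by a collapsible pair, and an ambient isotopy).

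The cylinder construction you propose is not merely a repackaging of this, and it hides a real difficulty you underplay. In Proposition~\ref{CH+-=C} the object set does not change, so every cylinder $[0,1]\times\cell(a)$ is a genuine closed cell and $Z=\bigcup_j Y^j$ is a CW complex whose retraction to either end is immediate. Here the $t=1$ slice has two fewer cells, and your proposed half-open families $[0,1)\times\cell(x)$, $[0,1)\times\cell(y)$ are not cells; the behaviour as $t\to1$ must be a collapse (the limiting points of $[0,1)\times\cell(x)$ must be identified with points of $|\cC_H|^{(i-1)}$ via $\Psi_1$), not a detachment. Saying ``$(\cell(x),\cell(y))$ detaches from every other cell and \ldots{} collapses away'' conflates two incompatible outcomes, and the subsequent assertion that $Z$ deformation retracts onto its $t=1$ slice ``exactly as after Proposition~\ref{CH+-=C}'' does not follow: in Proposition~\ref{CH+-=C} that observation is trivial because each $[0,1]\times\cell(a)$ retracts rel $\{1\}\times\cell(a)$, whereas here you must in addition show that the limiting structure at $t=1$ is the mapping-cylinder collapse. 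In effect, making your $Z$ rigorous would force you to build (and verify the CW structure of) the mapping cylinder of the quotient $q\colon|\cC|\to X$ glued to the cylinder of an isotopy $X\simeq|\cC_H|$ --- at which point you have reconstructed the paper's two-step argument, only with extra work. So I would not call this wrong, but it is a longer and more delicate route whose central technical point (the disappearance of $\cell(x)$, $\cell(y)$ at $t=1$) is exactly what the paper's quotient $X$ is designed to sidestep.
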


\begin{proof}
The realisation $|\Cat_H|$ is built up inductively from cells $\mathcal{C}(\bar{a})$ for objects $\bar{a}$ of $\Cat_H$ with increasing indices as prescribed by the Cohen-Jones-Segal construction. The way in which these cells are attached corresponds to the framed neat embeddings $\bar{\imath}_{\bar{a},\bar{b}}$ defined in Lemma \ref{embedding3}.

Firstly observe that the pair of skeleta $|\Cat|^{(i-2)}$ and $|\Cat_H|^{(i-2)}$ are identical.  Further attaching all cells $\mathcal{C}(\bar{a})$ for objects $\bar{a}$ with index equal to $i-1$, we have
\[
|\Cat|^{(i-1)} = |\Cat_H|^{(i-1)} \cup \mathcal{C}(y) \hookleftarrow |\Cat_H|^{(i-1)} {\rm .}
\] 

For objects $\bar{a}$ of $\Cat_H$ of index $m \geq |x|=i$ the cells $\mathcal{C}(a)$ are attached to $|\Cat|^{(m-1)}$ inductively via the Thom construction in the usual way, corresponding to the identifications
\[
\Gamma_{a,b} : \mathcal{M}(a,b) \times \mathcal{C}(b) \rightarrow \partial \mathcal{C}(a) {\rm .}
\]

The cells $\mathcal{C}(\bar{a})$ are attached to $|\Cat_H|^{(m-1)}$ corresponding to the identifications that come from the embeddings $\bar{\imath}_{\bar{a},\bar{b}}$ constructed in this subsection. To show that the CW complexes produced as a result of these methods are homotopy equivalent, consider the CW complex $X$ homotopy equivalent to $|\cC|$, and that is obtained from $|\Cat|$ by collapsing the cell $\mathcal{C}(x)$ via $\Psi_t$.

\[
X = |\cC| / ( \Psi_s(x) \sim \Psi_1(x), x \in (\partial \cell(x) \setminus {\rm int}(\cell_y (x))),0 \leq s \leq 1) {\rm .}
\]

Collapsing $\mathcal{C}(x)$ in $|\Cat|$ gives a description of $X$ as a CW complex with two fewer cells than $|\cC|$, and where the attaching maps of $X$ are given in terms of the attaching maps for $|\cC|$ and the map $\Psi_1$.

Indeed, the cell $\mathcal{C}(a)$ attaches to $X^{(m-1)}$ by applying the Thom construction to the embeddings $\Gamma_{\bar{a}, \bar{b}} = \Gamma_{a,b} \cup \Gamma_{x,b \times a,y}$ defined in Lemma \ref{embedding2}.

In Lemma \ref{embedding3} the neat embeddings $\bar{\imath}_{\bar{a},\bar{b}}$ of the framed flow category $\Cat_H$ are defined via perturbations of the embeddings $\Gamma_{\bar{a},\bar{b}}$ inside $\partial \mathcal{C}(a)$.  Finally, the Isotopy Extension Theorem ensures that there is a global isotopy in the Euclidean space $\E_{\bf d} [b:a]$ between the two embeddings which extends the isotopy in $\partial \mathcal{C}(a)$.  This gives a homotopy equivalence $|\Cat| \simeq X \simeq |\Cat_H|$ of CW complexes.
\end{proof}

\section{The framings of $1$-dimensional moduli spaces}
\label{sec:framings}

\subsection{Some simple stable homotopy types}
\label{subsec:chang_easy_examples}
In this subsection we consider three non-trivial stable homotopy types, the last two of which were demonstrated to be wedge summands of the Lipshitz-Sarkar space $\X^{\Kh}(K)$ for particular knots $K$ in \cite{LipSarSq}.  These spaces have low-dimensional representatives as based CW complexes $\CP^2$, $\RP^4 / \RP^1$, and $\RP^5 / \RP^2$.  We give framed flow categories which give rise to each of these spaces.  These framed flow categories are the simplest such possible in the sense that they have the minimal number of objects required to give the cohomology of the associated spaces, and their $0$-dimensional moduli spaces contain no cancelling pairs.

Let $m \geq 3$ and let $e^{m}$, $e^{m+1}$, and $e^{m+2}$ be three cells for which the dimension is indicated by the superscript, and let $b$ be a basepoint.

The group
\[ [\partial e^{m+2}, \{b\} \cup e^m] \cong [ S^{m+1}, S^m ] \cong \pi_{m+1}(S^m) \]
\noindent is isomorphic to $\Z/2$.

We can give representatives for the two elements of this group in the following way.  Let $K = S^1 \subset S^{m+1}$ be an embedded circle.  The normal bundle to the circle is then a trivial $D^m$-bundle over $S^1$.  There are two ways to frame this normal bundle up to homotopy equivalence (corresponding to the fact that $\pi_1(\SO(m))$ is a 2-element group).  Each framing then determines a map $S^{m+1} \rightarrow S^m$ by the Thom construction, and exactly one of the framings will induce the non-trivial element of $\pi_{m+1}(S^m)$.

In the Cohen-Jones-Segal construction, a framed flow category $\cC$ consisting of two objects $p^i$ and $p^{i+2}$ gives rise to the stable homotopy type of a CW complex consisting of a basepoint $b$ and one cell of each degree $m$ and $m+2$ for some $m >> 0$.  (This CW complex would then undergo (de)suspension so that its reduced cohomology would be supported in degrees $i$ and $i+2$).  Given that the reduced cohomology of $\Sigma^{i-2}\CP^2$ is 2-dimensional, this is the framed flow category with the smallest number of objects that might give rise to the stable homotopy type of $\Sigma^{i-2}\CP^2$.

The attaching map $S^{m+1} \cong \partial e^{m+2} \rightarrow \{b\} \cup e^m \cong S^m$ is given by applying the Thom construction to a disjoint union of framed embedded circles lying in $\partial e^{m+2}$.  These framed embedded circles are exactly the framed embedded moduli space $\M(p^{i+2}, p^i)$.  

In terms of the Lipshitz-Sarkar framing conventions, if $\M(p^{i+2}, p^i)$ consists of $r$ circle components framed $0$ and $s$ circle components framed $1$, then the induced attaching map $\partial e^{m+2} \rightarrow \{b\} \cup e^m$ is the wedge sum of $r$ homotopically non-trivial maps and $s$ homotopically trivial maps $S^{m+1} \rightarrow S^m$.

If the attaching map is non-trivial (equivalently if $r$ is odd), then the resulting stable homotopy type is that of $\Sigma^{i-2} \CP^2$ (see Example 5.6 in \cite{JLS}).  If the attaching map is trivial (equivalently if $r$ is even) then the resulting stable homotopy type is of course that of a wedge of Moore spaces $\Sigma^{i-2} (S^2 \vee S^4)$.

A similar analysis applies to the cases $\Sigma^{i-2}(\RP^4 / \RP^1)$ and $\Sigma^{i-3}(\RP^5 / \RP^2)$ (see Example 5.5 in \cite{JLS} for descriptions which are a single Whitney trick away from the following description).  In the case of $\Sigma^{i-2}(\RP^4 / \RP^1)$ we take a framed flow category $\cC$ consisting of three objects $p^i$, $p^{i+1}$, and $p^{i+2}$ (which again is the smallest number of objects that could give rise to the cohomology of $\Sigma^{i-2}(\RP^4 / \RP^1)$.  The resulting cell complex is built from the basepoint $b$ and three cells $e^m$, $e^{m+1}$, $e^{m+2}$.  

Since we know the differentials in the cochain complex, we know that the signed count of the number of points in $\M(p^{i+2}, p^{i+1})$ must be $2$ and the signed count of the number of points in $\M(p^{i+1}, p)$ must be $0$.  By the Whitney trick then we may assume that the signed count agrees with the absolute count.  This forces $\M(p^{i+2}, p^i)$ to be a boundaryless framed 1-manifold, in other words a disjoint union of framed 
embedded circles lying in $\partial e^{m+2}$.  As before, if the number of $0$-framed circles is odd then we obtain $\Sigma^{i-2}(\RP^4 / \RP^1)$, but if the number of $0$-framed circles is even then we obtain a wedge of Moore spaces $\Sigma^i (\RP^2 \vee S^0)$.

The case of $\Sigma^{i-3}(\RP^5 / \RP^2)$ arises from a flow category of three objects $p^i$, $p^{i+1}$, and $p^{i+2}$ in which $\M(p^{i+2}, p^{i+1})$ is empty, $\M(p^{i+1}, p^i)$ consists of two points with the same framing, and $\M(p^{i+2}, p^i)$ contains an odd number of $0$-framed circles (when the number of $0$-framed circles is even then we obtain a wedge of Moore spaces $\Sigma^{i-1}(S^3 \vee \RP^2)$).

\subsection{Framing conventions for 1-dimensional moduli spaces}
\label{subsec:stsq_recipe}
The formulae in \cite{LipSarSq} and \cite{JLS} require a way to encode the framing information of the $1$-dimensional moduli spaces. Particularly for intervals one has to be quite specific, and here we give a summary of \cite[\S 5]{JLS}.

Recall that the $1$-dimensional moduli spaces are framedly embedded into some $\R^{d_i}\times [0,\infty) \times \R^{d_{i+1}}$. Circle components stay away from the boundary of this Euclidean half-space. The framing together with a tangent direction of the circle represents an element of $H_1(\SO(d_i+d_{i+1}+1))$, a group which is $\Z/2$ provided $d_i+d_{i+1}\geq 2$, as we shall assume. We then assign this element as the framing information of the circle.

\begin{remark}
 As the tangential direction is taken into account, we get the following curious side effect: if a circle is trivially embedded into $\R^3$ with trivial framing, the resulting element of $H_1(\SO(3))$ is non-trivial. This is because the tangent direction with one normal direction performs one rotation in $\SO(2)$ with the second normal direction being constant.
Similarly, if we take a circle as the fibre of the Hopf fibration with framing coming from the pull-back of a framing at a point, the resulting element of $H_1(\SO(3))$ is trivial.
\end{remark}

For interval components $J$ we also obtain an element $fr(J)\in \Z/2$ which depends on a coherent system of paths joining the different framings of endpoints.

We assume that each $0$-dimensional moduli space is framed using the standard framing, that is, the positive framings are framed via $(e_1,\ldots,e_{d_i})$ in $\R^{d_i}$ (that is, using the standard basis), and negative framings are framed via $(-e_1,e_2,\ldots,e_{d_i})$ in $\R^{d_i}$.

Interval components of $1$-dimensional moduli spaces are therefore framed so that the boundary, when embedded in $\R^{d_i}\times\{0\}\times \R^{d_{i+1}}$ is framed via
\begin{align*}
&(e_1,\ldots,e_{d_i},e_{d_i+1},\ldots,e_{e_i+e_{i+1}}), (-e_1,e_2,\ldots,e_{d_i},e_{d_i+1},\ldots,e_{e_i+e_{i+1}}),\\ &(e_1,\ldots,e_{d_i},-e_{d_i+1},e_{d_i+2}\ldots,e_{e_i+e_{i+1}}), (-e_1,e_2,\ldots,e_{d_i},-e_{d_i+1},e_{d_i+2}\ldots,e_{e_i+e_{i+1}})
\end{align*}
which we denote by $\PP$, $\MP$, $\PM$, $\MM$, respectively.

\begin{definition}
A \em coherent system of paths joining $\PP$, $\MP$, $\PM$, $\MM$ \em is a choice of path $\overline{\varphi_1\varphi_2}$ in $\SO(m+1)$ from $\varphi_1$ to $\varphi_2$ for each pair of frames $\varphi_1,\varphi_2\in \{\PP,\MP,\PM,\MM\}$ satisfying the following cocycle conditions:
\begin{enumerate}
	\item For all $\varphi\in \{\PP,\MP,\PM,\MM\}$ the loop $\overline{\varphi\varphi}$ is null-homotopic;
	\item For all $\varphi_1,\varphi_2,\varphi_3\in \{\PP,\MP,\PM,\MM\}$ the path $\overline{\varphi_1\varphi_2}\cdot \overline{\varphi_2\varphi_3}$ is homotopic to $\overline{\varphi_1,\varphi_3}$ relative to the endpoints.
\end{enumerate}
\end{definition}

Coherent systems of paths exist, we will use the one described in \cite[Lm.3.1]{LipSarSq}. To describe it, we will refer to the first coordinate of $\R^{d_i}$ as the $e_1$-coordinate, to the first coordinate of $\R^{d_{i+1}}$ as the $e_2$ coordinate, and to the coordinate of $[0,\infty)$ as the $\bar{e}$-coordinate.

For $\varphi_1,\varphi_2\in \{\PP,\MP,\PM,\MM\}$ define $\overline{\varphi_1\varphi_2}$ as follows:
\begin{enumerate}
	\renewcommand\theenumi{\roman{enumi}}
	\item $\overline{\PP\MP}, \overline{\MP\PP}, \overline{\PM\MM}, \overline{\MM\PM}$: Rotate $180^\circ$ around the $e_2$-axis, such that the first vector equals $\bar{e}$ halfway through.
	\item $\overline{\PP\PM}, \overline{\PM\PP}$: Rotate $180^\circ$ around the $e_1$-axis, such that the second vector equals $\bar{e}$ halfway through.
	\item $\overline{\MP\MM}, \overline{\MM\MP}$: Rotate $180^\circ$ around the $e_1$-axis, such that the second vector equals $-\bar{e}$ halfway through.
	\item $\overline{\PP\MM}, \overline{\MM\PP}, \overline{\MP\PM}, \overline{\PM\MP}$: Rotate $180^\circ$ around the $\bar{e}$-axis, such that the second vector equals $-e_1$ halfway through.
\end{enumerate}

The framing of a $1$-dimensional moduli space $\M(z,x)$ is now encoded in a function
\[
 fr\colon \pi_0(\M(z,x))\to \Z/2{\rm ,}
\]
\noindent already given on circle components, and which is $0$ or $1$ on interval components depending as the interval is coherently framed or not.

\subsection{Gluing formulae for the Whitney trick}
\label{subsec:whitney_glue}
If $\cC$ is a framed flow category containing two objects $x,y$ with $\M(x,y)=\{P,M\}$ where $P$ is framed positively and $M$ negatively, then we can form the framed flow category $\cC_W$.  We have seen that $|\cC|$ and $|\cC_W|$ are stably homotopy equivalent.  For the purposes of computation, in this subsection we determine how the framings of the $1$-dimensional moduli spaces of $\cC_W$ can be determined from those of $\cC$.

\begin{figure}
	\centerline{
		{
			\psfrag{ldots}{$\ldots$}
			\psfrag{P}{$P$}
			\psfrag{M}{$M$}
			\psfrag{e1}{$e_1$}
			\psfrag{e2}{$e_2$}
			\psfrag{e}{$e$}
			\psfrag{p}{$p$}
			\includegraphics[height=3in,width=3.5in]{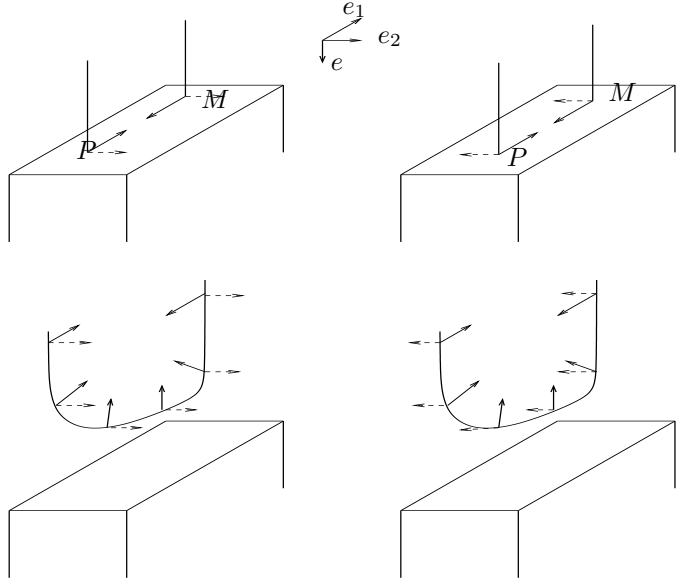}
		}}
		\caption{Here we show how to compute the framings of the glued intervals in $\M(\bar{a}, \bar{y})$ in the case that $a >_1 x$.  The top left big cuboid represents a positively framed point $p$ in $\M(a,x)$, while the top right represents a negatively framed point $p$ in $\M(a,x)$.  Also shown in top diagrams are the ends of the intervals in $\M(a,y)$ which have endpoints $(P,p)$ and $(M,p)$.  The horizontal plane is, as usual, a corner of Euclidean space.  The bottom left and bottom right diagrams show the framing on part of the new $1$-dimensional moduli space in $\M(\bar{a}, \bar{y})$ created by gluing the two ends together.}
		\label{fig:whitney_gluing_first}
	\end{figure}

\begin{proposition}
\label{prop:whitney_glue_formula_first}
Let $\cC$ and $\cC_W$ be as above, and suppose that $a \in \Ob(\cC)$ is such that $a >_1 x$.  Now, $\M(\bar{a},\bar{y})$ is obtained from $\M(a,y)$ by gluing $n$ pairs of endpoints of intervals of $\M(a,y)$ together where $n$ is the number of points in $\M(a,x)$.  The framing of the moduli spaces of $\M(\bar{a}, \bar{y})$ can then be calculated by summing the frames of the contributing moduli spaces of $\M(a,y)$, and adding $1$ for each point of $\M(a,x)$ at which there is a gluing.
\end{proposition}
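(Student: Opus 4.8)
The plan is to localise the statement at the $n$ points of $\M(a,x)$ where gluings occur. Since $a>_1 x$, the space $\M(a,y)$ is a compact $1$-manifold, framedly embedded in a half-space $\R^{d'}\times[0,\infty)\times\R^{d''}$ whose $[0,\infty)$-coordinate I will call $\bar e$, whose first coordinate is the $P/M$-axis $e_1$ (with $P$ at $-1$ and $M$ at $+1$), and whose remaining relevant coordinate is $e_2$. For $p\in\M(a,x)$ the boundary points $(P,p),(M,p)\in\partial\M(a,y)$ have frame types $(\PP,\MP)$ if $p$ is positively framed and $(\PM,\MM)$ if $p$ is negatively framed, since $P$ carries $+e_1$ and $M$ carries $-e_1$. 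Definition~\ref{def:CH+-}(2) deletes these $2n$ boundary points and zips the collar of $(P,p)$ to the collar of $(M,p)$, which in $\M(\bar a,\bar y)$ joins the interval component of $\M(a,y)$ ending at $(P,p)$ to the one ending at $(M,p)$; so each component $C$ of $\M(\bar a,\bar y)$ meeting a gluing region is obtained from a finite (possibly cyclic) chain $C_1,\dots,C_k$ of interval components of $\M(a,y)$ glued at $m$ of the $n$ points, while circle components of $\M(a,y)$ survive with their $fr$-value. By Definition~\ref{def:CH+-embedframe} the embedding and framing of $\cC_W$ agree with those of $\cC$ except near the glued arcs, where they are as in Figure~\ref{fig:can+-}, and, for $a>_1 x$, in detail in Figure~\ref{fig:whitney_gluing_first}.

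Next I would reduce $fr(C)$ to a sum of local contributions. Writing $\langle\gamma\rangle\in\pi_1(\SO)=\Z/2$ for the class of a loop $\gamma$, an interval $J$ from a frame $\varphi_1$ to a frame $\varphi_2$ with framing path $\alpha_J$ has $fr(J)=\langle\alpha_J\cdot\overline{\varphi_2\varphi_1}\rangle$, and the framing path $\alpha_C$ of the glued component $C$ is the concatenation of the $\alpha_{C_j}$ with the framing path $\beta_p$ of the glued arc inserted at each of the $m$ gluing points. Splitting the reference path between the outer endpoints of $C$ into reference paths through the intermediate frame types (cocycle conditions) and using that $\Z/2$ is abelian, the reference paths telescope and give
\[
 fr(C)=\sum_{j=1}^{k} fr(C_j)+\sum_{p}\big\langle \beta_p\cdot \overline{\psi_p'\psi_p}\big\rangle,
\]
where $(\psi_p,\psi_p')$ is $(\PP,\MP)$ or $(\PM,\MM)$ according to the sign of $p$; the same holds, by the same telescoping with the tangent-direction convention for circles, when $C$ closes up into a circle. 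So it remains to show each local term equals $1$.

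For the local computation, note that in both cases the relevant reference path $\overline{\PP\MP}=\overline{\PM\MM}$ is type~(i) of the chosen coherent system, the $180^{\circ}$ rotation about the $e_2$-axis taking the first framing vector from $e_1$ to $-e_1$ through $\bar e$ in the $e_1$-$\bar e$ plane. Across the glued arc of $\cC_W$, on the other hand, the first framing vector runs from $+e_1$ (the constant product-frame vector on the $P$-collar, pointing toward $e_1=0$) to $-e_1$ (on the $M$-collar) while the arc dips toward $\bar e=0$ and passes from $e_1\approx-1$ to $e_1\approx+1$; reading Figure~\ref{fig:whitney_gluing_first}, the path $\beta_p$ is the half-turn on the \emph{opposite} side of the reference path. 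Hence $\beta_p\cdot\overline{\psi_p'\psi_p}$ is a full rotation in the $e_1$-$\bar e$ plane, i.e.\ the generator of $\pi_1(\SO(d'+d''+1))$, so the local term is $1$; substituting into the display yields $fr(C)=\sum_j fr(C_j)+m$, which is the assertion.

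The step I expect to be the main obstacle is this last one: one must check carefully, from Definition~\ref{def:CH+-embedframe} and Figure~\ref{fig:whitney_gluing_first}, that the glued arc turns the first framing vector across $\bar e=0$ in the sense \emph{opposite} to the coherent reference path of \cite[Lm.3.1]{LipSarSq} (turning it the other way would make the local term $0$ and break the formula), and keep straight which of the four frame types $\PP,\MP,\PM,\MM$ sits at each end, which the set-up step handles. The telescoping in the second step is routine once one remembers that $\pi_1(\SO)$ is abelian.
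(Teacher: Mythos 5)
Your proposal is correct and follows essentially the same approach as the paper's proof: reduce the framing of a glued component to a sum of local contributions at each gluing point (with the $fr(C_j)$ carried along unchanged), then read off from the geometry of Definition~\ref{def:CH+-embedframe} and Figure~\ref{fig:whitney_gluing_first} that each glued arc gives a $180^\circ$ rotation whose first vector passes through $-\bar e$, the opposite side to the type~(i) reference path $\overline{\PP\MP}=\overline{\PM\MM}$, hence a local contribution of $1$ in $\pi_1(\SO)$. The paper leaves the telescoping decomposition and the identification of endpoint frame types implicit, whereas you spell these out, but the content of the argument — and in particular the key local observation that the glued arc's framing is non-standard — is the same.
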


\begin{proof}
The situation is illustrated in Figure \ref{fig:whitney_gluing_first}, where we consider a particular point of $\M(a,x)$ (which is either positively or negatively framed) and the gluing that corresponds to it.  We can see (in both the positively and the negative framed case) that the framing of the gluing region gives a path in $\SO(3)$ which is a rotation around the $e_2$ axis by $180^\circ$ in which halfway through the first vector is pointing in the $-e$ direction.  This corresponds to a non-standard frame path by the classification of Lipshitz-Sarkar, and hence we get a contribution of $+1$ for each gluing.
\end{proof}

\begin{proposition}
\label{prop:whitney_glue_formula_second}
Let $\cC$ and $\cC_W$ be as above, and suppose that $b \in \Ob(\cC)$ is such that $y >_1 b$.  Now, $\M(\bar{x},\bar{b})$ is obtained from $\M(x,b)$ by gluing $n$ pairs of endpoints of intervals of $\M(x,b)$ together where $n$ is the number of points in $\M(y,b)$.  The framing of the moduli spaces of $\M(\bar{x}, \bar{b})$ can then be calculated by summing the frames of the contributing moduli spaces of $\M(x,b)$, and adding $1$ for each \emph{positively framed} point of $\M(y,b)$ at which there is a gluing.
\end{proposition}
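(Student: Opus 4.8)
The plan is to follow the template of the proof of Proposition~\ref{prop:whitney_glue_formula_first}, keeping careful track of which block of the ambient Euclidean space each gluing takes place in; this is exactly what produces the asymmetry between positively and negatively framed points. Here $|b| = i-2$, so $\M(y,b)$ is a finite set of points $p_1,\dots,p_n$ and $\M(x,b)$ is a compact $1$-manifold whose boundary contains $\M(y,b) \times \{P,M\}$ as $2n$ of its interval endpoints; by case (3) of Definition~\ref{def:CH+-}, the space $\M(\bar{x},\bar{b})$ is obtained from $\M(x,b)$ by gluing, for each $p_j$, the endpoint $(p_j,P)$ to the endpoint $(p_j,M)$, and each such gluing either joins two distinct interval components or closes a single interval component into a circle. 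By Definition~\ref{def:CH+-embedframe} the embedding $\bar{\imath}$ and framing $\bar{\varphi}$ agree with those of $\cC$ away from small neighbourhoods of the gluing regions, and inside them are as depicted in Figure~\ref{fig:can+-}. Using the cocycle conditions on the chosen coherent system of paths, the invariant $fr$ of a component of $\M(\bar{x},\bar{b})$ assembled from interval components of $\M(x,b)$ across gluing regions equals the sum in $\Z/2$ of the $fr$'s of those components together with one term per gluing region, that term being $0$ if the path traced by $\bar{\varphi}$ across the region is homotopic relative to its endpoints to the coherent-system path between the two endpoint frames, and $1$ otherwise; the case of a gluing that closes an interval into a circle is handled in the same way, using that $\overline{\varphi\varphi}$ is null-homotopic and the relation between the interval and circle framing conventions of Subsection~\ref{subsec:stsq_recipe}. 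It therefore suffices to determine, for a single gluing at a point $p \in \M(y,b)$, the two endpoint frames and the gluing-region path.

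For the endpoint frames, note that for the $1$-dimensional moduli space $\M(x,b)$, framedly embedded into $\R^{d_{i-2}} \times [0,\infty) \times \R^{d_{i-1}}$, the first factor $\R^{d_{i-2}}$ (containing $\imath_{y,b}(\M(y,b))$) plays the role of the ``$e_1$''-block of Subsection~\ref{subsec:stsq_recipe}, while the second factor $\R^{d_{i-1}}$ (containing $\imath_{x,y}(\{P,M\})$) plays the role of the ``$e_2$''-block. By the normalisations fixed before Definition~\ref{def:CH+-embedframe}, $P$ and $M$ are embedded at first coordinate $-1$ and $+1$ of $\R^{d_{i-1}}$, with framings that are standard and first-framing-vector-reversed respectively (as $P$ is positively and $M$ negatively framed), while each point of $\M(y,b)$ is framed standardly or with first framing vector reversed in $\R^{d_{i-2}}$ according to its sign. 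Since the framing of $\M(x,b)$ at a composite boundary point is the direct sum of the framings of its two factors, the endpoint $(p,P)$ carries the frame $\PP$ if $p$ is positively framed and $\MP$ if $p$ is negatively framed, whereas the endpoint $(p,M)$ carries $\PM$ if $p$ is positively framed and $\MM$ if $p$ is negatively framed.

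Reading off the second diagram of Figure~\ref{fig:can+-}, the framing across the gluing region at $p$ performs a $180^\circ$ rotation about the ``$e_1$''-axis during which the second framing vector points in the $-\bar{e}$ direction at the halfway point --- the analogue, in the $\R^{d_{i-1}}$-block, of the rotation identified in the proof of Proposition~\ref{prop:whitney_glue_formula_first} and illustrated in Figure~\ref{fig:whitney_gluing_first}. Comparing with the coherent system recalled in Subsection~\ref{subsec:stsq_recipe}: the path $\overline{\PP\PM}$ rotates about the $e_1$-axis with its second vector passing through $+\bar{e}$, whereas $\overline{\MP\MM}$ rotates about the $e_1$-axis with its second vector passing through $-\bar{e}$. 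Hence if $p$ is positively framed the gluing-region path, which runs from $\PP$ to $\PM$ through $-\bar{e}$, differs from $\overline{\PP\PM}$ by the generator of $\pi_1(\SO(d_{i-2}+d_{i-1}+1))$ and contributes $1$; while if $p$ is negatively framed the gluing-region path, which runs from $\MP$ to $\MM$ through $-\bar{e}$, is homotopic relative to its endpoints to $\overline{\MP\MM}$ and contributes $0$. Summing over the $n$ gluings and over the contributing components of $\M(x,b)$ yields the stated formula.

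I expect the main obstacle to be the step of the previous paragraph: extracting from the embedding-and-framing model of Figure~\ref{fig:can+-} the exact homotopy class of the path traced across each gluing region --- in particular verifying that the second framing vector passes through $-\bar{e}$ and not $+\bar{e}$ --- while simultaneously keeping track of which of $\PP,\MP,\PM,\MM$ sits at each endpoint as a function of the sign of the point $p \in \M(y,b)$. This bookkeeping is precisely what produces the ``positively framed'' qualifier: the asymmetry between Propositions~\ref{prop:whitney_glue_formula_first} and~\ref{prop:whitney_glue_formula_second} reflects the fact that, in the chosen coherent system, the paths $\overline{\PP\MP}$ and $\overline{\PM\MM}$ relevant to the first proposition both rotate about the $e_2$-axis with their first vector through $+\bar{e}$, whereas the paths $\overline{\PP\PM}$ and $\overline{\MP\MM}$ relevant to the present one are oppositely oriented.
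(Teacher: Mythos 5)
Your proof is correct and follows essentially the same route as the paper's: both identify the gluing region path as a $180^\circ$ rotation about the $e_1$-axis with second vector passing through $-\bar{e}$, and both conclude the contribution is non-standard precisely when $p$ is positively framed. The paper states this in two sentences by appeal to Figure \ref{fig:whitney_gluing_second}; you supply the same content more explicitly, by working out that $(p,P)$ and $(p,M)$ carry frames $\PP,\PM$ (resp.\ $\MP,\MM$) according to the sign of $p$ and comparing directly against the Lipshitz--Sarkar coherent-system paths $\overline{\PP\PM}$ (through $+\bar{e}$) and $\overline{\MP\MM}$ (through $-\bar{e}$).
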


\begin{figure}
	\centerline{
		{
			\psfrag{ldots}{$\ldots$}
			\psfrag{P}{$P$}
			\psfrag{M}{$M$}
			\psfrag{e1}{$e_1$}
			\psfrag{e2}{$e_2$}
			\psfrag{e}{$e$}
			\psfrag{-1}{$-1$}
			\includegraphics[height=2.6in,width=3.3in]{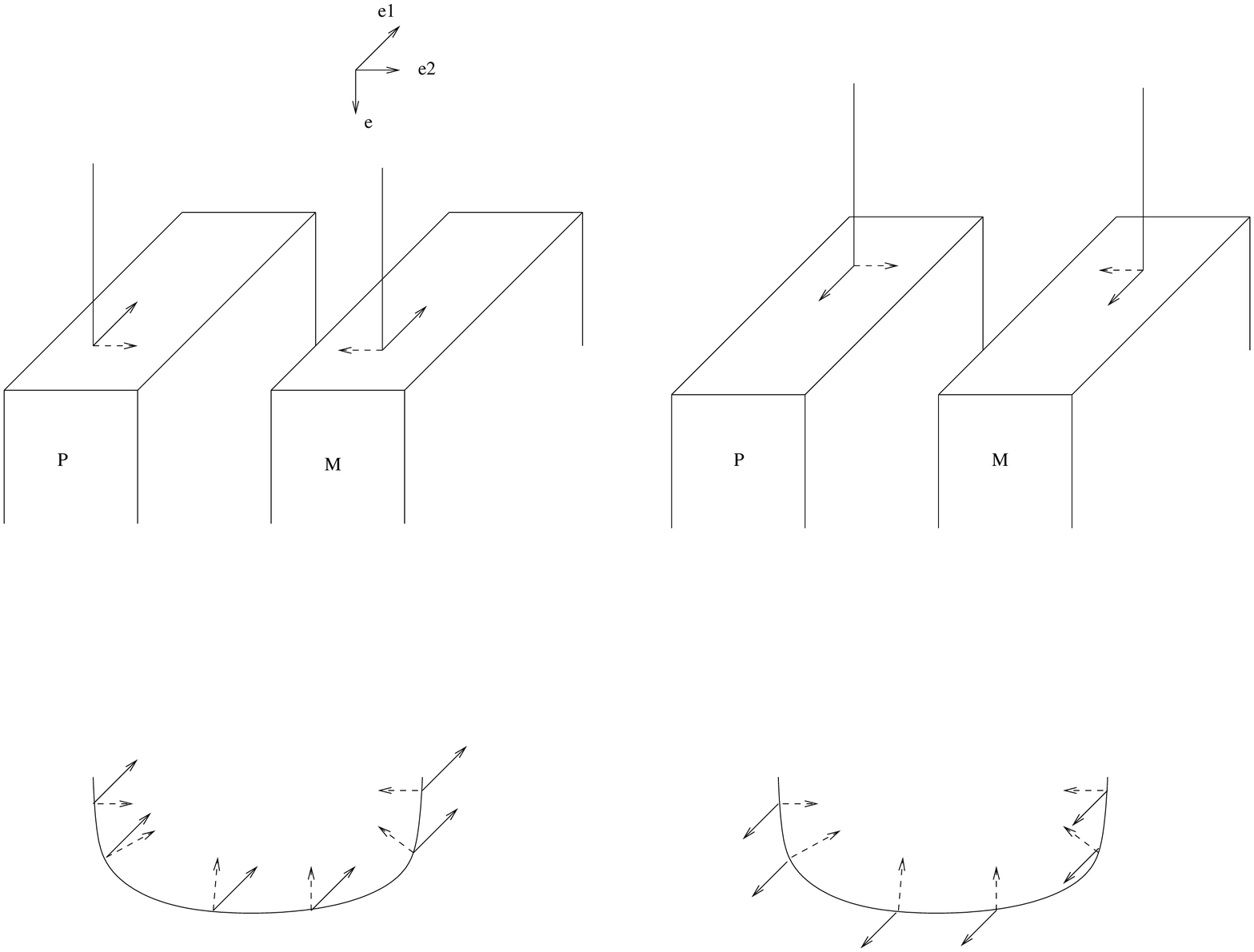} 
		}}
		\caption{Here we show how to compute the framings of the glued intervals in $\M(\bar{x}, \bar{b})$ in the case that $y >_1 b$.  Each big cuboid represents either the framed point $P$ or the framed point $M$ in $\M(x,y)$.  In the top left diagram we consider a positively framed point $p \in \M(y,b)$ and we show the ends of the intervals in $\M(x,b)$ which have endpoints $(p,P)$ and $(p,M)$.  The top right is similar but now $p$ is negatively framed.  The bottom left and bottom right diagrams show the framing on part of the new $1$-dimensional moduli space in $\M(\bar{x}, \bar{b})$ created by gluing the two ends together.}
		\label{fig:whitney_gluing_second}
	\end{figure}

\begin{proof}
In Figure \ref{fig:whitney_gluing_second} we consider a particular point $p$ of $\M(y,b)$ and the gluing that corresponds to $p$.  We can see that the framing of the gluing region gives a path in $\SO(3)$ which is a rotation around the $e_1$ axis by $180^\circ$ in which halfway through the second vector is pointing in the $-e$ direction.  This corresponds to a non-standard frame path when $p$ is positively framed, and a standard frame path when $p$ is negatively framed.  Hence we get a contribution of $+1$ for each gluing corresponding to a positively framed point $p$.
\end{proof}

\subsection{Gluing formulae for handle cancellation}
\label{subsec:gluing_formula}
If $\Cat$ is a framed flow category that contains two objects $x,y$ with $\M(x,y)=\{\ast\}$, we can form the framed flow category $\Cat_H$ through handle cancellation.  As in the previous subsection, we now investigate how to determine the framings on the $1$-dimensional moduli spaces of $\cC_H$.  This turns out to be more involved than in the case of the Whitney trick.  Indeed, the first difficulty that arises is that the new points in $0$-dimensional moduli spaces, namely the ones in $\M(x,b)\times \M(a,y)$ where $|a|=|x|=i+1$ and $|b|=|y|=i$, are not necessarily framed with one of the two standard frames, even if the original flow category has all $0$-dimensional moduli spaces framed using standard frames only.

To understand the embedding of $(B,A)\in \M(x,b)\times \M(a,y)$ we need to analyze the map $\Gamma_{x,b\times a,y}$. The relevant coordinates of the cells involved are $[0,R]\times [-R,R]^{d_i}$ for $C(x)$ and $C(a)$, and $\{0\}\times [-\varepsilon,\varepsilon]^{d_i}$ for $C(b)$ and $C(y)$. The remaining coordinates play no role after projection to $\R^{d_i}$. We then have the map $\Gamma_{x,b\times a,y}\colon \M(x,b)\times \M(a,y)\times \{0\}\times [-\varepsilon,\varepsilon]^{d_i} \to \{0\} \times [-R,R]^{d_i}$ which we can identify with the framed embedding into $\R^{d_i}$. This map is given by the compositions of $\imath_{x,b}$ with $\Psi_1$, $\imath_{x,y}^{-1}$ and $\imath_{a,y}$. Here $\Psi_1$ can be viewed as inversion of $\R^{d_i}$ around a small sphere with center $0$. 
Note that this sphere can be thought of as the boundary of the ball around $\imath_{x,y}(\ast)$ given by the framing. The points in $\M(x,b)$ are embedded away from this ball, then they are moved into this ball by $\Psi_1$, and then they are embedded near the points of $\M(a,y)$. So the only time that we may get a non-standard framing is when the map $\Psi_1$ is applied.

However, if we assume that the points of $\M(x,b)$ are embedded into $\R\times \{0\} \subset \R^{d_i}$, then the inversion $\Psi_1$ can be taken only to flip the first coordinate of the framing of these points. That is, standard frames are send to standard frames with the opposite sign.

So before we do the cancellation, we will change the embedding by an isotopy so that all moduli spaces between objects of degree $i$ and $i+1$ are embedded into $\R\times \{0\}\subset \R^{d_i}$ with standard frames.  Of course, this does not change the stable homotopy type of the realization by \cite[\S 3]{LipSarKhov}.

\begin{remark}
\label{remark_embedding}
Different isotopies into $\R\times \{0\}\subset \R^{d_i}$ can lead to different gluing formulas. We will therefore make a particular choice for the embeddings. In particular we assume that $\M(x,y)$ is sent to $0$ and all other points of moduli spaces $\M(a,b)$ with $|a|=i+1=|b|+1$ are embedded into $(-\infty,0)\times \{0\}$.
\end{remark}

If we denote the framing of $A\in \M(a,b)$ by $\varepsilon_A\in \{0, 1\}=\Z/2$, we can now easily see that $\varepsilon_{(B,A)} = 1+\varepsilon_B+\varepsilon_\ast+\varepsilon_A$, with the first summand coming from $\Psi_1$.

Let $a,b\in \Ob(\Cat)$ be two objects with $|a|=i+1$ and $|b|=i-1$, so that $\M(a,b)$ is a 1-dimensional moduli space. It is therefore a disjoint union of circles and compact intervals. Recall that the framing can be described by a function $fr\colon \pi_0(\M(a,b))\to \Z/2$. Provided that $|x|=i$ or $i+1$, the new moduli space is
\[
 \M(\bar{a},\bar{b})=\M(a,b)\cup \M(x,b)\times \M(a,y)
\]
with the gluing either along $\M(x,b)\times \M(a,x)$ if $|x|=i$ or along $\M(y,b)\times \M(a,y)$ if $|x|=i+1$.

To understand the framed embeddings of these moduli spaces, we need to take a closer look at $\cw(a)$. The relevant part is 
\[
 [0,R]\times [-R,R]^{d_{i-1}}\times [0,R]\times [-R,R]^{d_i}
\]
with the two relevant boundaries given by $\partial_{i-1}\cw(a)$ obtained by setting the first interval $[0,R]$ to $0$ and $\partial_i\cw(a)$ where the second interval $[0,R]$ is set to $0$. The map $\Gamma_{a,b}\colon \M(a,b) \times \cw(b) \to \partial_{i-1}\cw(a)$ can then be viewed as the framed embedding.

Let us first consider the case $|x|=i$.

If $I\subset \M(a,b)$ is an interval, its endpoints are given by points $(C,D)\in \M(c,b)\times \M(a,c)$ and $(C',D')\in \M(c',b)\times \M(a,c')$ for some $c,c'\in \Ob(\Cat)$ with $|c|=i=|c'|$. If $c\not=x\not=c'$, then $I$ remains an interval in $\M(\bar{a},\bar{b})$ with the same framed embedding, so the value of $fr(I)$ does not change. The same holds for a circle.

If $c$ or $c'$ are equal to $x$, the interval $I$ gets glued to at least one interval coming from $\M(a,y)$. Before we consider this case let us look at intervals in $\M(x,b)\times \M(a,y)$. Because $|x|=i$ these are of the form $\{B\}\times J$ where $B\in \M(x,b)$ and $J$ is an interval in $\M(a,y)$. The endpoints of $J$ are given by points $(C,D)\in \M(c,y)\times \M(a,c)$ and $(C',D')\in \M(c',y)\times \M(a,c')$ for some $c,c'\in \Ob(\Cat)$ with $|c|=i=|c'|$.

If $c\not= x\not= c'$, then $\{B\}\times J$ remains an interval in $\M(\bar{a},\bar{b})$, with endpoints given by $((B,C),D)$ and $((B,C'),D')$. Let us analyze the framing of $\{B\}\times J$ in this case.

The interval $\{B\}\times J$ is embedded by $\Gamma_{x,b\times a,y}$ into $\partial_{i-1}\cw(a)$ and is basically just parallel to $\Gamma_{a,y}(J)$. The framing in the $\R^{d_{i-1}}$ part is changed by $1+\varepsilon_B+\varepsilon_\ast$ because of $\Gamma_{x,b}$, $\Psi_1$ and $\Gamma_{x,y}^{-1}$. But changing the framing on a fixed path by reflecting the first coordinate will change a standard path to a non-standard path.

We therefore get
\begin{equation}
 \label{eqn:frame_J_i}
 fr(\{B\} \times J) = fr(J) + 1+\varepsilon_\ast+\varepsilon_B.
\end{equation}

Now assume that endpoints in the interval $J\subset \M(a,y)$ are glued along a point $(B,A)\in \M(x,b)\times \M(a,x)$. The picture in Figure \ref{fig_cube} shows $\partial_{i-1}\cw(a)$, where we only show the first dimension of $\R^{d_{i-1}}$ and $\R^{d_i}$, and where the back square is $\partial_{i-1}\cw(a)\cap \partial_i\cw(a)$. Protruding to the front is the $[0,R]$ direction. The interval $\{B\}\times J$ is parallel to $J$, and is glued to the interval $I$ along the dotted interval.
\begin{figure}[ht]
 \psfrag{I}{$I$}
 \psfrag{BJ}{$\{B\}\times J$}
 \psfrag{J}{$J$}
 \includegraphics[height=6cm,width=6cm]{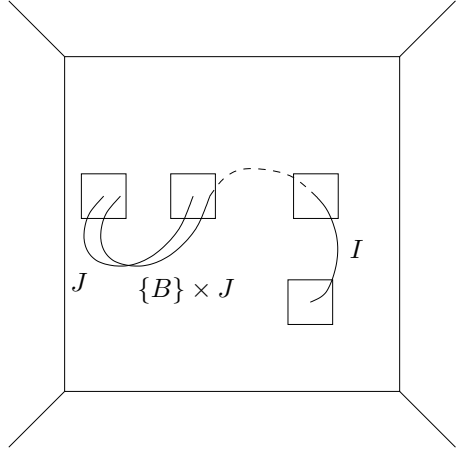}
 \caption{Gluing along one point.}
 \label{fig_cube}
\end{figure}

The dotted interval is the image of $\Psi_t(\{B\})$ within $\cw(x)$. 
Because of the convention that $B$ is embedded into $(-\infty,0)\times\{0\}\subset \R^{d_{i-1}}$, compare Remark \ref{remark_embedding}, this interval has framing $\varepsilon_B$.

The framing of the glued interval as an element of $\Z/2$ is therefore the sum of the framing of $I$ with the framing of $\{B\}\times J$ given by (\ref{eqn:frame_J_i}) and $\varepsilon_B$.

If both endpoints of $J$ are glued, we get two extra intervals whose contribution cancel each other. Using induction, we can summarize the gluings with $|x|=i=|y|+1$ as follows.

\begin{proposition}
\label{prop:gluing_form_i}
 Let $a,b\not=y$ be objects with $|a|=i+1=|x|+1=|b|+2$ and $K\subset \M(\bar{a},\bar{b})$. If $K$ is a circle, then either
\begin{enumerate}
 \item $K\subset \M(a,b)$, in which case $fr(K)$ is the same as before.
 \item $K=\{B\}\times K'\subset \M(x,b)\times \M(a,y)$ in which case $fr(K)=fr(K')$.
 \item $K$ is the result of gluing intervals $I_1,\ldots,I_k\subset \M(a,b)$ with intervals $\{B_1\}\times J_1,\ldots,\{B_k\}\times J_k\subset \M(x,b)\times \M(a,y)$ in which case
\[
 fr(K) = k(1+\varepsilon_\ast) + \sum_{i=1}^k (fr(I_i)+fr(J_i) + \varepsilon_{B_i}).
\]
\end{enumerate}
If $K$ is an interval, then either
\begin{enumerate}
\setcounter{enumi}{3}
 \item the endpoints of $K$ are of the form $(B,A)\in \M(c,b)\times \M(a,c)$, $(B',A')\in \M(c',b)\times \M(a,c')$ with $c,c'\not=x$. Then $K$ is the result of gluing intervals $I_1,\ldots,I_{k+1}\subset \M(a,b)$ with intervals $\{B_1\}\times J_1,\ldots,\{B_k\}\times J_k\subset \M(x,b)\times \M(a,y)$ in which case
\[
 fr(K) = k(1+\varepsilon_\ast) + \sum_{i=1}^k (fr(I_i)+fr(J_i) + \varepsilon_{B_i}) + fr(I_{k+1}).
\]
 \item the endpoints of $K$ are of the form $(B,A)\in \M(c,b)\times \M(a,c)$, $(B_k,C,D)\in \M(x,b)\times \M(c',y)\times \M(a,c')$ with $c,c'\not=x$. Then $K$ is the result of gluing intervals $I_1,\ldots,I_k\subset \M(a,b)$ with intervals $\{B_1\}\times J_1,\ldots, \{B_k\}\times J_k \subset \M(x,b)\times \M(a,y)$ in which case
\[
 fr(K) = k(1+\varepsilon_\ast) + \varepsilon_{B_k}+ \sum_{i=1}^k (fr(I_i)+fr(J_i)+\varepsilon_{B_i}).
\]
 \item the endpoints of $K$ are of the form $(B_1,C,D)\in \M(x,b)\times\M(c,y)\times \M(a,c)$ and $(B_{k+1},C',D')\in \M(x,b)\times \M(c',y)\times \M(a,c')$ with $c,c'\not=x$. Then $K$ is the result of gluing intervals $I_1,\ldots,I_k\subset \M(a,b)$ with intervals $\{B_1\}\times J_1,\ldots, \{B_{k+1}\}\times J_{k+1} \subset \M(x,b)\times \M(a,y)$ in which case
\begin{align*}
 fr(K) = &\, (k+1)(1+\varepsilon_\ast) + \varepsilon_{B_1}+\varepsilon_{B_{k+1}} + \\
  & \, \sum_{i=1}^{k+1} (fr(J_i)+\varepsilon_{B_i}) + \sum_{i=1}^k fr(I_i).
\end{align*}

\end{enumerate}

\end{proposition}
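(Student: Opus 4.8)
The plan is to decompose each component $K\subset\M(\bar a,\bar b)$ into the moduli-space pieces out of which it is assembled, to compute the framing of each piece separately, and then to recombine these using the additivity of $fr$ modulo $2$ under concatenation of framed intervals along endpoints carrying standard frames. By Definition \ref{cancelledcat} and the discussion preceding the statement, $K$ is obtained by gluing finitely many intervals $I_1,\ldots\subset\M(a,b)$, which keep their original framed embeddings from $(\Cat,\imath,\varphi)$, to finitely many intervals of the form $\{B\}\times J\subset\M(x,b)\times\M(a,y)$, embedded by $\Gamma_{x,b\times a,y}$, and the two families are joined along the ``dotted'' arcs $\Psi_t(\{B\})\subset\cw(x)$ produced by the collar in Lemma \ref{embedding1} (see Figure \ref{fig_cube}). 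Since all $0$-dimensional moduli spaces between objects of degrees $i$ and $i+1$ are assumed standardly framed (Remark \ref{remark_embedding}), every gluing point $(B,A)\in\M(x,b)\times\M(a,x)$, as well as the endpoints of each $J$, carries one of the standard frames $\PP,\MP,\PM,\MM$, so $fr(K)$ is the sum over all pieces and all dotted arcs of their individual framings.

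First I would record the three local contributions. For $I_i\subset\M(a,b)$ there is nothing to do, as its embedding and hence $fr(I_i)$ are inherited from $\Cat$. For a piece $\{B\}\times J$, the map $\Gamma_{x,b\times a,y}$ is, after projecting to the $\R^{d_{i-1}}$ part, the composition of $\Gamma_{x,b}$, the inversion $\Psi_1$ and $\Gamma_{x,y}^{-1}$, which under the standing convention of Remark \ref{remark_embedding} amounts to reflecting the first coordinate; because reflecting the first coordinate carries a standard frame path to a non-standard one, this is exactly equation (\ref{eqn:frame_J_i}), that is $fr(\{B\}\times J)=fr(J)+1+\varepsilon_\ast+\varepsilon_B$ (the derivation uses only the effect on the $\R^{d_{i-1}}$-frame and so applies whether or not the endpoints of $J$ lie over $x$). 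For the dotted arc $\Psi_t(\{B\})$, the convention that $B$ is embedded in $(-\infty,0)\times\{0\}\subset\R^{d_{i-1}}$ shows, again via the first-coordinate reflection effected by $\Psi_t$, that this arc is framed by $\varepsilon_B$.

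Next comes the combinatorial bookkeeping. Writing $K$ as an alternating concatenation of the $I_i$ and the $\{B_i\}\times J_i$, joined at the dotted arcs, additivity of $fr$ gives $fr(K)$ as the sum of the $fr(I_i)$, the $fr(\{B_i\}\times J_i)$, and one term $\varepsilon_{B_i}$ for each dotted arc. A dotted arc occurs at every endpoint of a $J_i$ lying over $x$; hence an interval $\{B_i\}\times J_i$ lying in the interior of $K$ contributes two dotted arcs, both with framing $\varepsilon_{B_i}$, which cancel modulo $2$, whereas a $\{B_i\}\times J_i$ one of whose ends is an endpoint of $K$ contributes a single, uncancelled $\varepsilon_{B_i}$. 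This is precisely what separates the six cases: (1) is trivial; in (2) the product $\{B\}\times K'$ of a circle $K'\subset\M(a,y)$ picks up the framing of $K'$ with no correction, since the reflection introduced by $\Psi_1$ is here a constant, hence null-homotopic, loop in $\SO$; in (3) and (4) every $\{B_i\}\times J_i$ is interior, so all dotted contributions cancel; in (5) exactly one end of $K$ comes from $\M(x,b)\times\M(a,y)$, leaving a single $\varepsilon_{B_k}$; and in (6) both ends do, leaving $\varepsilon_{B_1}+\varepsilon_{B_{k+1}}$. Substituting the formula for $fr(\{B_i\}\times J_i)$ into the sum in each case, and organising the iterated gluing by an induction on the number $k$ of pieces of each type, yields the stated identities.

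The step I expect to be the main obstacle is the combinatorial bookkeeping of the gluing: keeping precise track of which point $B_i$ occurs at which joint, and therefore of exactly when the $\varepsilon_{B_i}$ contributions pair up and cancel, across all six configurations of the endpoints of $K$. The two auxiliary framing computations --- that $\Psi_1$ reflects the first $\R^{d_{i-1}}$ coordinate, so that (\ref{eqn:frame_J_i}) holds, and that the dotted arc $\Psi_t(\{B\})$ is framed by $\varepsilon_B$ --- are by comparison routine, and once they and the additivity of $fr$ along standardly framed joints are in place the proposition follows by the induction above.
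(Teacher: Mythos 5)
Your proposal is correct and takes essentially the same approach as the paper: the paper's formal proof is only a few sentences (noting that a glued circle needs equal numbers of $\M(a,b)$ and $\M(x,b)\times\M(a,y)$ pieces and that framings add), relying on the preceding discussion where equation~(\ref{eqn:frame_J_i}) and the $\varepsilon_B$ framing of the dotted arc $\Psi_t(\{B\})$ are established, and your proposal reconstructs exactly that ingredient list and spells out the case-by-case cancellation of the dotted-arc contributions that the paper leaves implicit.
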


\begin{proof}
 If $K$ is a circle that was already a circle in either $\M(a,b)$ or $\M(x,b)\times \M(a,y)$, its framing is determined by an element of the fundamental group, and the triviality or non-triviality of this element is preserved. If this circle is the result of gluing various intervals, we need the same number of intervals coming from $\M(a,b)$ and from $\M(x,b)\times \M(a,y)$. Gluing of standard or non-standard paths is encoded by adding.
\end{proof}

Let us now consider the case $|x|=i+1=|y|+1$. Let $J\subset \M(x,b)$ be an interval with endpoints $(C,D)\in \M(c,b)\times\M(a,c)$ and $(C',D')\in \M(c',b)\times \M(x,c')$ with $c,c'\not=y$. For $A\in  \M(a,y)$ the interval $J\times \{A\}$ is embedded into $\R^{d_{i-1}}\times [0,\infty) \times \R^{d_i}$ as follows.
Again we can consider $\cw(b)=\{0\}\times [-\varepsilon,\varepsilon]^{d_{i-1}}\times\{0\}\times[\varepsilon,\varepsilon]^{d_i}$, that is, this contains the framing information in $\Gamma_{x,b\times a,y}$. Also note that $\Gamma_{x,y}^{-1}\circ \Psi_1(\partial_i\cw(x)-\cw_y(x))\subset \cw(y)$ is mapped to $\R^{d_{i-1}}\times \{0\} \times \R^{d_i}$ by $\Gamma_{x,b\times a,y}$ and contains the boundary of $J\times \{A\}$.

In Figure \ref{fig_reflect} we see two examples of framings of the interval $J$, one horizontally with $c=c'$ and one vertically with $c\not=c'$. Note that $\cw_c(x)$ are $\cw_{c'}(x)$ are embedded below $\cw_y(x)$ by Remark \ref{remark_embedding}. The map $\Psi_1$ now reflects $\partial_{i-1}\cw(x)$ and $\partial_i\cw(x)$ into $\cw_y(x)$ as in Figure \ref{fig_reflect}. We see that the intervals change their vertical orientation, and frames that pointed positively into $[0,R]$ within $\partial_{i-1}\cw(x)$ now point negatively into $[0,R]$ within $\cw_y(x)$.
\begin{figure}[ht]
 \includegraphics[height=5cm,width=12cm]{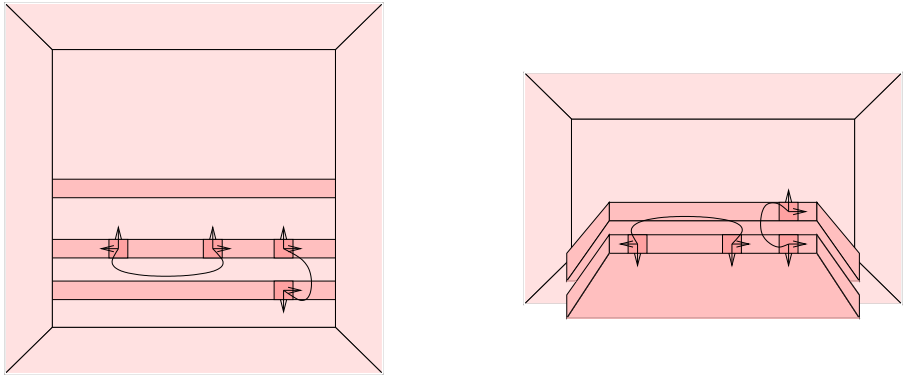}
 \caption{The left cube represents $\partial_{i-1}\cw(x)$ containing intervals $J$, with the back face containing $\partial_{i-1}\cw_t(x)$ for $t=y,c,c'$. The right cuboid represents $\cw_y(x)$ after applying $\Psi_1$. The cells $\cw_c(x)$ and $\cw_{c'}(x)$ are contained in the interior, and their vertical position and orientation has changed. The intervals are also contained in the interior.}
 \label{fig_reflect}
\end{figure}

This last change of orientation gets reversed though once we glue this cell into $\partial_{i-1}\cw(a)$. Also, reflecting the vertical direction preserves standard frames, as can be easily seen from our choice of standard frames. This also means that $\Gamma_{x,y}$ and $\Gamma_{a,y}$ have no further effect on the framing. We therefore get
\begin{equation}
 \label{eqn:frame_J_i+1}
 fr(J\times\{A\})=fr(J)
\end{equation}
for any $A\in \M(a,y)$.

If one of the endpoints is $(C,\ast)\in\M(y,b)\times \M(x,y)$, there is a gluing to an interval $I\in \M(a,b)$ with one endpoint $(C,A)\in\M(y,b)\times\M(a,y)$.
If $\varepsilon_{D'}\not=\varepsilon_\ast$ the situation is as in Figure \ref{fig_reflect3}.

\begin{figure}[ht]
 \includegraphics[height=5cm,width=12cm]{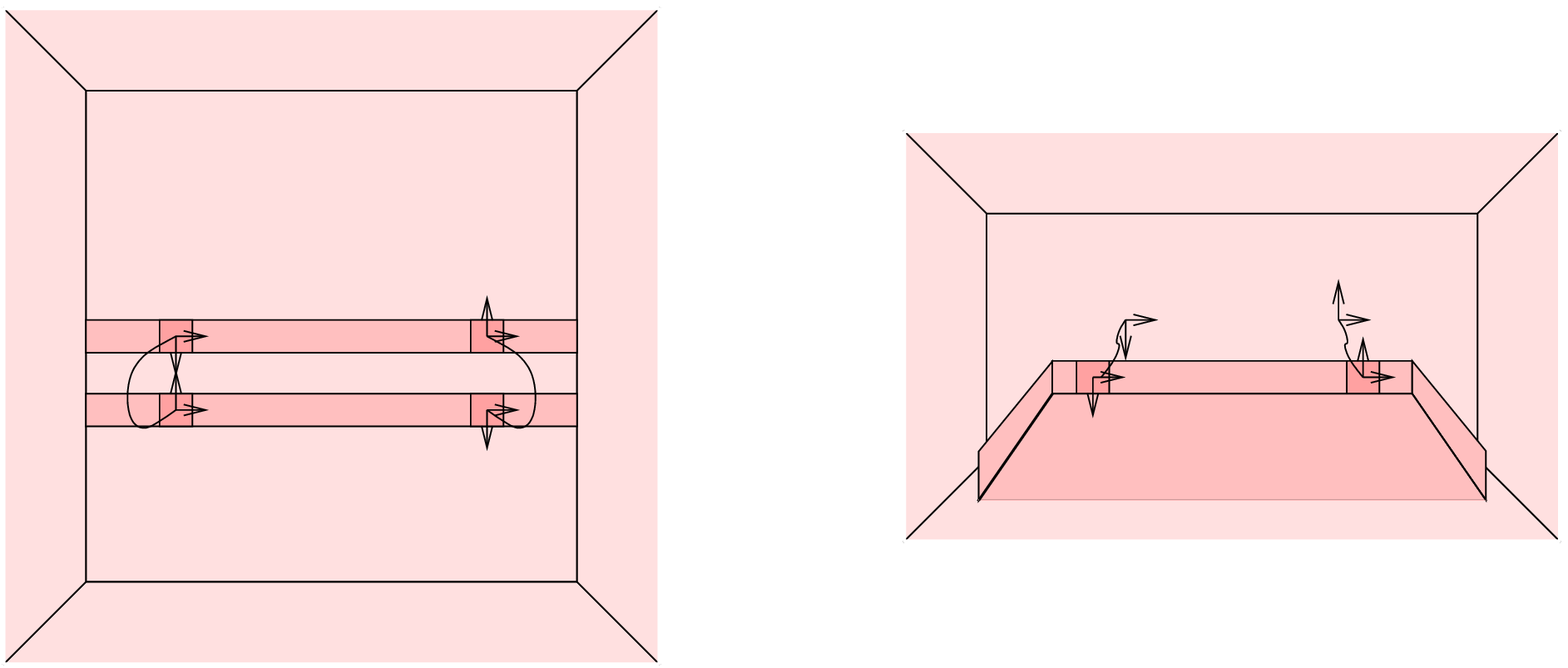}
 \caption{Applying $\Psi_1$ if there is one point of gluing, with $\varepsilon_\ast\not=\varepsilon_{D'}$.}
 \label{fig_reflect3}
\end{figure}

Note that if $J$ in $\partial_{i-1}\cw(x)$ is framed with the standard framing, then the framing of the interval on the right, going from $\PP$ to $\PM$, fixes the horizontal coordinate, while in the interval on the left we have to add a twist to get the standard path. This means that after applying $\Psi_1$, the framing on the right between $\PP$ and $\PP$ is standard, while the framing on the left between $\PM$ and $\PM$ is non-standard, as there is a twist.

There are two more cases to check, namely when $(C,\ast)$ is framed $\MP$ or $\MM$. But a similar argument shows that the change in framing of $J\times \{A\}$ is determined by $\varepsilon_\ast+\varepsilon_C$.

It remains to check the case $\varepsilon_{D'}=\varepsilon_\ast$ which is more difficult to visualize. The situation is as in Figure \ref{fig_rotate}.

\begin{figure}[ht]
 \psfrag{P}{$\Psi_1$}
 \includegraphics[height=5cm,width=12cm]{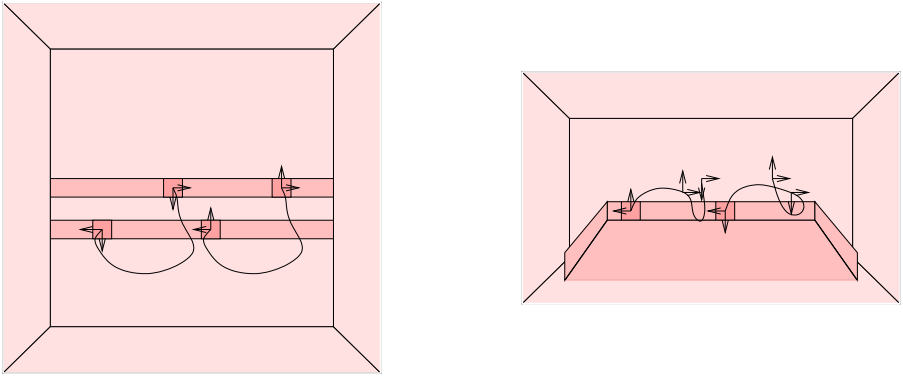}
 \caption{Applying $\Psi_1$ if there is one point of gluing, with $\varepsilon_\ast=\varepsilon_{D'}$.}
 \label{fig_rotate}
\end{figure}

We can think of the path starting at $(C,\ast)$ then droppiong sharpely vertically without changing its framing, and then forming the standard frame between a change in the first coordinate. During the sharp descent the vertical framing gets nearly tangential to the path, and if $\varepsilon_\ast=0$ it is pointing towards the point $(C,\ast)$. After reflecting by $\Psi_1$ the sharp descent then flips the second coordinate by pointing towards the point $(C,\ast)$.
If $\varepsilon_\ast=1$ the second coordinate gets flipped by pointing away from $(C,\ast)$. Using handmotions we can now see that the change in framing is also determined by $\varepsilon_\ast+\varepsilon_C$. That is, if $J$ gets glued at one endpoint, the change of frame is given by
\[
 fr(J\times \{A\}) = fr(J) + \varepsilon_\ast+\varepsilon_C.
\]

If we glue both endpoints, then $\Psi_1(J)$ has the same framings at the endpoints as $J$, but there is a reflection in the $[0,R]$-coordinate. This means standard paths go to non-standard paths. In that case we always have to add $1$ to the new frame formula.
\begin{proposition}
\label{prop:gluing_form_ii}
 Let $a\not=x,b$ be objects with $|a|=i+1=|x|=|b|+2$ and $K\subset \M(\bar{a},\bar{b})$. If $K$ is a circle, then either
\begin{enumerate}
 \item $K\subset \M(a,b)$, in which case $fr(K)$ is the same as before.
 \item $K=K'\times\{A\} \subset \M(x,b)\times \M(a,y)$ in which case $fr(K)=fr(K')$.
 \item $K$ is the result of gluing intervals $I_1,\ldots,I_k\subset \M(a,b)$ with intervals $J_1\times\{A_1\},\ldots,J_k\times\{A_k\} \subset \M(x,b)\times \M(a,y)$ in which case
\[
 fr(K) = k + \sum_{i=1}^k (fr(I_i)+fr(J_i)).
\]
\end{enumerate}
If $K$ is an interval, then either
\begin{enumerate}
\setcounter{enumi}{3}
 \item the endpoints of $K$ are of the form $(C,A)\in \M(c,b)\times \M(a,c)$, $(C',A')\in \M(c',b)\times \M(a,c')$ with $c,c'\not=x$. Then $K$ is the result of gluing intervals $I_1,\ldots,I_{k+1}\subset \M(a,b)$ with intervals $J_1\times\{A_1\},\ldots,J_k\times \{A_k\}\subset \M(x,b)\times \M(a,y)$ in which case
\[
 fr(K) = k + \sum_{i=1}^k (fr(I_i)+fr(J_i)) + fr(I_{k+1}).
\]
 \item the endpoints of $K$ are of the form $(C,A)\in \M(c,b)\times \M(a,c)$, $(C',D',A_k)\!\in \M(c',b)\times \M(x,c')\times \M(a,y)$ with $c,c'\not=x$. Then $K$ is the result of gluing intervals $I_1,\ldots,I_k\subset \M(a,b)$ with intervals $J_1\times \{A_1\},\ldots,J_k \times\{A_k\} \subset \M(x,b)\times \M(a,y)$ in which case
\[
 fr(K) = k + \varepsilon_{C'}+\varepsilon_{D'} + \sum_{i=1}^k (fr(I_i)+fr(J_i)).
\]
 \item the endpoints of $K$ are of the form $(C,D,A_1)\in \M(c,b)\times \M(x,c)\times\M(a,y)$ and $(C',D',A_{k+1})\in \M(c',b)\times \M(x,c')\times \M(a,y)$ with $c,c'\not=x$. Then $K$ is the result of gluing intervals $I_1,\ldots,I_k\subset \M(a,b)$ with intervals $J_1\times\{A_1\},\ldots,J_{k+1} \times \{A_{k+1}\} \subset \M(x,b)\times \M(a,y)$ in which case
\begin{align*}
 fr(K) = &\, 1+k + \varepsilon_{C'}+\varepsilon_{D'}+\varepsilon_C+\varepsilon_D + \sum_{i=1}^k (fr(I_i)+fr(J_i)) + fr(J_{k+1}).
\end{align*}
\end{enumerate}

\end{proposition}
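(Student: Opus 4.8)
The proof follows the template of the proof of Proposition~\ref{prop:gluing_form_i}: essentially all of the work has already been done in the local framing computations carried out in the paragraphs immediately preceding the statement, and what remains is an induction gluing those local contributions around each connected component of $\M(\bar{a},\bar{b})$. Recall the three local facts established there, valid once the embeddings are normalised as in Remark~\ref{remark_embedding} (harmless up to stable homotopy by \cite[\S 3]{LipSarKhov}): an interval $J\subset\M(x,b)$ with neither endpoint in $\M(y,b)\times\M(x,y)$ has $fr(J\times\{A\})=fr(J)$ by \eqref{eqn:frame_J_i+1}; an interval $J$ with exactly one endpoint $(C,\ast)\in\M(y,b)\times\M(x,y)$ acquires the extra term $\varepsilon_\ast+\varepsilon_C$ computed there; and an interval $J$ with both endpoints in $\M(y,b)\times\M(x,y)$ acquires an extra $1$, since $\Psi_1$ then reverses the $[0,R]$-coordinate and hence exchanges standard with non-standard paths.

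The plan is as follows. First dispose of the two ``pure'' circle cases. If $K\subset\M(a,b)$ is a circle, its embedding into $\partial\cw(a)$ is untouched, so $fr(K)$ is unchanged --- this is (1). If $K=K'\times\{A\}\subset\M(x,b)\times\M(a,y)$ is a circle, then $K'$ misses $\partial\M(x,b)$ and, near $K'$, the map $\Psi_1$ followed by the re-gluing of $\cw_y(x)$ into $\partial_{i-1}\cw(a)$ is a product of coordinate reflections, which leaves the framing class of a loop unchanged; so $fr(K)=fr(K')$ --- this is (2). Next, by Definition~\ref{cancelledcat} every remaining component of $\M(\bar{a},\bar{b})$ is an alternating concatenation of intervals $I_1,\ldots\subset\M(a,b)$ and $J_1\times\{A_1\},\ldots\subset\M(x,b)\times\M(a,y)$ joined along points of $\M(y,b)\times\M(a,y)$; hence a closed chain uses equally many of each (case~(3)), and an open chain uses $k+1$ and $k$, or $k$ and $k$, or $k$ and $k+1$ of them according as its two free ends lie in $\M(a,b)$, one in each, or in $\M(x,b)\times\M(a,y)$ (cases~(4), (5), (6)). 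Finally add up the contributions: each $I_j$ contributes $fr(I_j)$, and each $J_j\times\{A_j\}$ contributes $fr(J_j)$ together with its correction from the three facts above (an interior $J_j$ gets $+1$; a $J_j$ with a free end gets its one-endpoint $\varepsilon$-correction); then simplify in $\Z/2$.

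I expect the one real difficulty to be this last step: arranging the $\Z/2$ bookkeeping so that the corrections at interior gluing points cancel in pairs and the $+1$'s from the doubly-glued pieces combine with the one-endpoint corrections of the end pieces to produce exactly the integer coefficient ($k$ in cases~(3)--(5), $k+1$ in case~(6)) and the residual free-end terms ($\varepsilon_{C'}+\varepsilon_{D'}$ in (5); $\varepsilon_C+\varepsilon_D+\varepsilon_{C'}+\varepsilon_{D'}$ in (6)) of the statement. In particular one must reconcile the local corrections --- which the geometric analysis (Figures~\ref{fig_reflect}, \ref{fig_reflect3}, \ref{fig_rotate}) delivers in terms of the framing data at the \emph{glued} end of a $J$-piece --- with the data at the \emph{free} end recorded in cases~(5) and (6), using the constraint that $fr(J)$ imposes between the two boundary framings of $J\subset\M(x,b)$ once $\Psi_1$ has been applied. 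With that reconciliation in hand, the induction closes exactly as in Proposition~\ref{prop:gluing_form_i}.
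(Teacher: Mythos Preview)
Your outline is essentially the paper's own argument: the local computations preceding the proposition (Figures~\ref{fig_reflect}--\ref{fig_rotate} and equation~\eqref{eqn:frame_J_i+1}) do all the work, and the proposition itself is just the $\Z/2$ bookkeeping that assembles those contributions along each component. Your treatment of cases (1)--(4) and the general alternating structure matches the paper exactly.

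The one place where your account drifts is the reconciliation step at the end. You propose to pass from the glued-end correction $\varepsilon_\ast+\varepsilon_C$ to the free-end data $\varepsilon_{C'}+\varepsilon_{D'}$ ``using the constraint that $fr(J)$ imposes between the two boundary framings''. But $fr(J)$ does not constrain the endpoint framings --- it is independent of them --- so this cannot be the mechanism. The paper's proof is blunter: it simply records that for an interval $J$ glued at the single endpoint $(C,\ast)$, the correction computed in the preceding geometric analysis satisfies $\varepsilon_\ast+\varepsilon_C = 1+\varepsilon_{C'}+\varepsilon_{D'}$; this is not a constraint deduced after the fact but rather an equivalent way of reading off the same correction from Figures~\ref{fig_reflect3} and~\ref{fig_rotate}, phrased in terms of the free endpoint instead of the glued one. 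With that rewriting the sums in (5) and (6) drop out immediately, and the paper checks consistency by observing that the $k=0$ instance of (6) collapses (via $1+\varepsilon_C+\varepsilon_D+\varepsilon_{C'}+\varepsilon_{D'}=0$) back to~\eqref{eqn:frame_J_i+1}. So your plan is right, but the closing move is a direct re-expression of the local correction rather than anything involving $fr(J)$.
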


\begin{proof}
Following the discussion above we only remark that if we glue an interval $J$ at one endpoint, then $\varepsilon_\ast+\varepsilon_C=1+\varepsilon_{C'}+\varepsilon_{D'}$. Also, the case where there is no gluing corresponds to the last case with $k=0$, noticing that $1+\varepsilon_{C'}+\varepsilon_{D'}+\varepsilon_C+\varepsilon_D =0$, leading to (\ref{eqn:frame_J_i+1}).
\end{proof}

\section{Examples}
\label{sec:examples}

\subsection{The (3,4)-torus knot}
\label{subsec:torus34}
In \cite[\S 6.1]{JLS} we calculated a second Steenrod square on the $\Z/2$ Khovanov cohomology of the $(3,4)$-torus knot
\[ \Sq^2 : \Kh^{2,11}(T_{3,4}; \Z/2) \rightarrow \Kh^{4,11}(T_{3,4}; \Z/2) {\rm .} \]
It followed that the Lipshitz-Sarkar stable homotopy type in quantum degree $11$ is
\[ \X^{\Kh}_{11}(T_{3,4}) \simeq \Sigma^{-1}\RP^5 / \RP^2 {\rm .} \]

This calculation was done using the glued diagram expressing $T_{3,4}$ as the pretzel knot $P(-2,3,3)$, and taking advantage of the small flow category thus available.  After this the flow category was reduced again to a flow category $\Cat$ with $19$ objects using upward- and downward-closed subcategories corresponding to contractible spaces. The objects and $0$-dimensional moduli spaces of $\Cat$ are depicted in Figure \ref{page1}, with the $1$-dimensional moduli spaces listed in Figure \ref{mod_spaces_1}.

\begin{figure}[ht!]
	\includegraphics[scale=0.475]{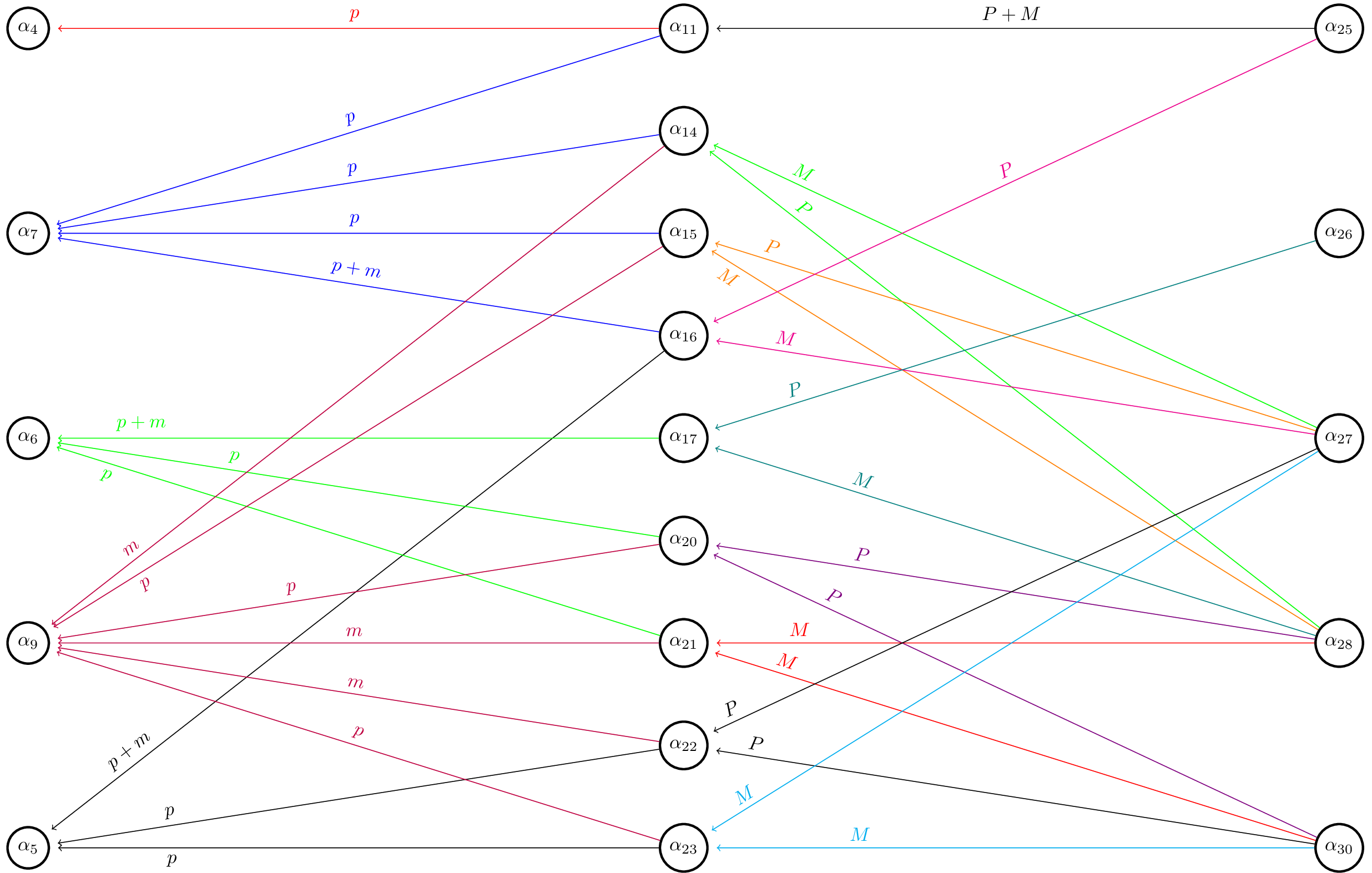}
	\caption{The flow category $\Cat$.}
	\label{page1}
\end{figure}
The $0$-dimensional moduli spaces are points where $P$ or $p$ indicates a positive framing, while $M$ or $m$ indicates a negative framing. For the $1$-dimensional moduli spaces a red $0$ indicates the standard framing of the interval, while a red $1$ indicates the non-standard framing. These values were determined in \cite[\S 6.1]{JLS}.
\begin{figure}
	\includegraphics[scale=1]{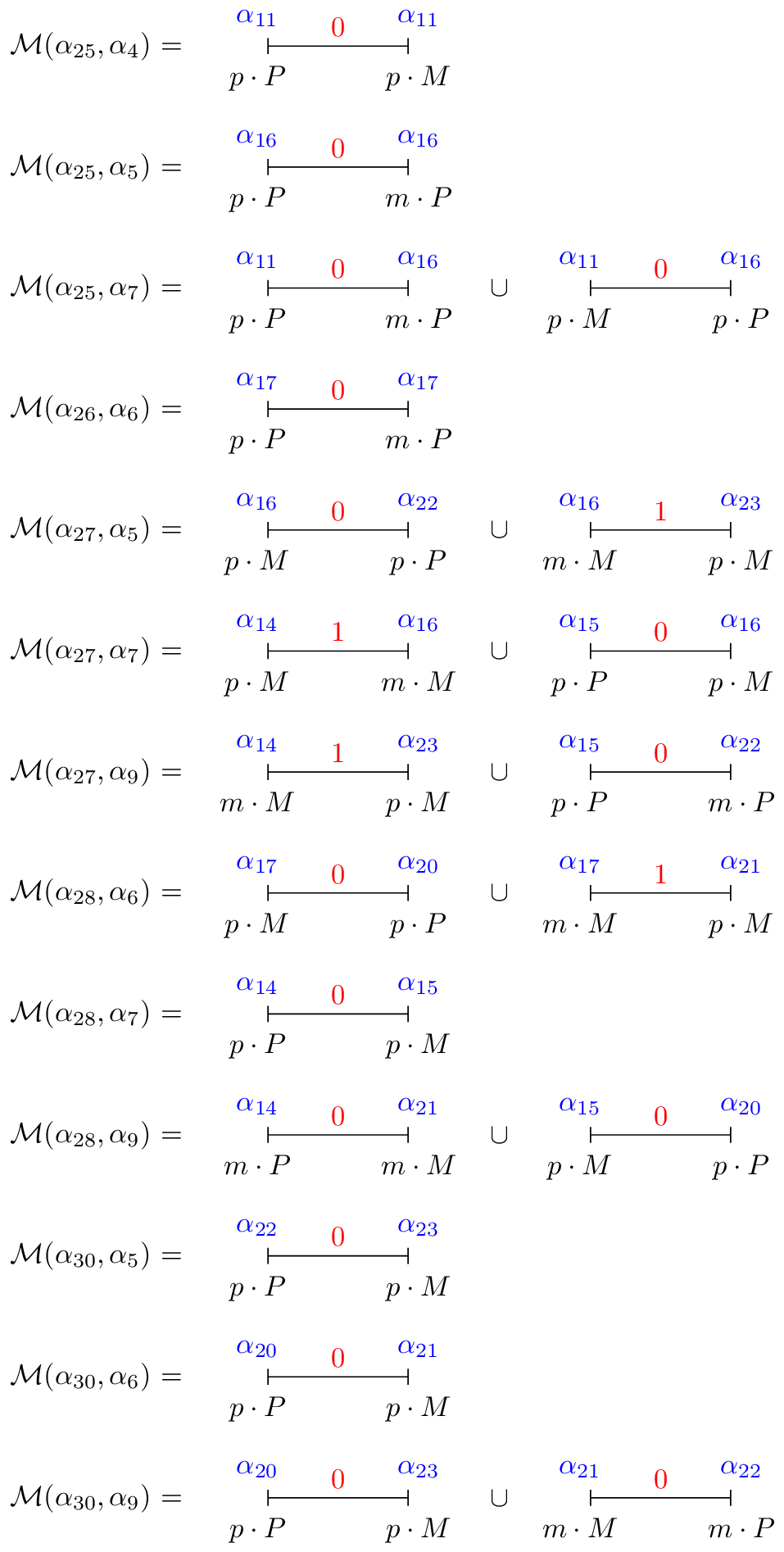}
	\caption{$1$-dimensional moduli spaces for $\Cat$.}
	\label{mod_spaces_1}
\end{figure}
Together with a choice of topological boundary matching, the Steenrod square can be calculated from this data, as was done in \cite[\S 6.1]{JLS}.

We would like to see more directly that the space associated to $\cC$ is $\Sigma^{-1}\RP^5 / \RP^2$.  The moduli spaces for $\Cat$ can be cancelled using a combination of handle cancellation and the Whitney trick until there are no $0$-dimensional moduli spaces left that are either single points, or that contain two points with opposite sign. There are \emph{a priori} many ways to do this, but we shall proceed as follows.

Firstly, there are four $0$-dimensional moduli spaces in $\Cat$ (see Figure \ref{page1}) where the Whitney trick can be applied. These are $\mathcal{M}(\alpha_{25},\alpha_{11})$, $\mathcal{M}(\alpha_{16},\alpha_{7})$, $\mathcal{M}(\alpha_{16},\alpha_{5})$ and $\mathcal{M}(\alpha_{17},\alpha_{6})$ (and we shall cancel them in this order).
By cancelling $\mathcal{M}(\alpha_{25},\alpha_{11})$, the $1$-dimensional moduli spaces $\mathcal{M}(\alpha_{25},\alpha_{4})$ and $\mathcal{M}(\alpha_{25},\alpha_{7})$ both change. Originally, $\mathcal{M}(\alpha_{25},\alpha_{4})$ is the interval
\begin{center}
\begin{tikzpicture}
\node[] at (-6,0) {$\mathcal{M}(\alpha_{25},\alpha_{4})$ = };
\node[label=below:$p \cdot P$, label=above:{\color{blue}$\alpha_{11}$}] (25_4_1)at (-4,0) {};
\node[label=below:$p \cdot M$, label=above:{\color{blue}$\alpha_{11}$}] (25_4_2)at (-2,0) {};

\draw[|-|] (25_4_1)--(25_4_2) node[sloped, above,pos=0.5, red]{$0$};
\end{tikzpicture}
\end{center}
After performing the Whitney trick, where $x= \alpha_{25}$ and $y = \alpha_{11}$, the two endpoints of the interval are identified. The result is the circle
\begin{center}
\begin{tikzpicture}
\node[] at (-6,0) {$\mathcal{M}(\bar{\alpha}_{25},\bar{\alpha}_{4})$ = };
\node[] at (-4.3,0) {\color{red} 1};
\draw (-4.3,0) circle (0.5cm);
\end{tikzpicture}
\end{center}
framed with a $1$ using Proposition \ref{prop:whitney_glue_formula_second}. Notice that a circle with a $1$-framing in a moduli space $\M(\alpha,\beta)$ means that the attaching map of $C(\alpha)$ contains a wedge summand which is attached to $C(\beta)$ in a way homotopic to a constant map. As there are no objects of higher degree, we can simply remove this circle from the moduli space without changing the stable homotopy type of the framed flow category. Similarly, two circles with a $0$-framing in a moduli space can also be removed.

The moduli space $\mathcal{M}(\alpha_{25},\alpha_{7})$ originally consists of the two intervals
\begin{center}
\begin{tikzpicture}
\node[] at (-6,-7.5) {$\mathcal{M}(\alpha_{25},\alpha_{7})$ = };
\node[label=below:$p \cdot P$, label=above:{\color{blue}$\alpha_{11}$}] (25_7_1)at (-4,-7.5) {};
\node[label=below:$m \cdot P$, label=above:{\color{blue}$\alpha_{16}$}] (25_7_2)at (-2,-7.5) {};
\node[label=below:$p \cdot M$, label=above:{\color{blue}$\alpha_{11}$}] (25_7_3)at (0,-7.5) {};
\node[label=below:$p \cdot P$, label=above:{\color{blue}$\alpha_{16}$}] (25_7_4)at (2,-7.5) {};

\node[] at (-1,-7.5) {$\cup$};

\draw[|-|] (25_7_1)--(25_7_2) node[sloped, above,pos=0.5, red]{$0$};
\draw[|-|] (25_7_3)--(25_7_4) node[sloped, above,pos=0.5, red]{$0$};
\end{tikzpicture}
\end{center}
After performing the Whitney trick with $x= \alpha_{25}$ and $y = \alpha_{11}$, these two intervals are glued together along the boundaries corresponding to $\mathcal{M}(\alpha_{11},\alpha_7) \times \mathcal{M}(\alpha_{25},\alpha_{11})$. The result is the single interval
\begin{center}
\begin{tikzpicture}
\node[] at (-6,-7.5) {$\mathcal{M}(\bar{\alpha}_{25},\bar{\alpha}_{7})$ = };
\node[label=below:$m \cdot P$, label=above:{\color{blue}$\alpha_{16}$}] (25_7_1)at (-4,-7.5) {};
\node[label=below:$p \cdot P$, label=above:{\color{blue}$\alpha_{16}$}] (25_7_2)at (-2,-7.5) {};

\draw[|-|] (25_7_1)--(25_7_2) node[sloped, above,pos=0.5, red]{$1$};
\end{tikzpicture}
\end{center}
which is framed with a $1$ using Proposition \ref{prop:whitney_glue_formula_second}.

After performing Whitney tricks in $\M(\alpha_{16},\alpha_7)$, $\M(\alpha_{16},\alpha_5)$ and $\M(\alpha_{17},\alpha_6)$ the following $1$-dimensional moduli spaces have changed:
\begin{align*}
 \mathcal{M}(\alpha_{25},\alpha_{4}) & = \emptyset & \M(\alpha_{25},\alpha_{5}) & = \emptyset \\
 \mathcal{M}(\alpha_{25},\alpha_{7}) & = \emptyset & \M(\alpha_{26},\alpha_{6}) & = \emptyset
\end{align*}
because we can remove circles with a $1$-framing, and

\begin{tikzpicture}[
solidnode/.style={circle, minimum size = 0.5mm, fill=black!, very thick}
]

\node[] at (-6,-4) {$\mathcal{M}(\alpha_{27},\alpha_{5})$ = };
\node[label=below:$p \cdot P$, label=above:{\color{blue}$\alpha_{22}$}] (27_5_1)at (-4.3,-4) {};
\node[label=below:$p \cdot M$, label=above:{\color{blue}$\alpha_{23}$}] (27_5_2)at (-2.3,-4) {};

\node[] at (0,-4) {$\mathcal{M}(\alpha_{27},\alpha_{7})$ = };
\node[label=below:$p \cdot M$, label=above:{\color{blue}$\alpha_{14}$}] (27_7_1)at (1.7,-4) {};
\node[label=below:$p \cdot P$, label=above:{\color{blue}$\alpha_{15}$}] (27_7_2)at (3.7,-4) {};

\node[] at (-6,-5.5) {$\mathcal{M}(\alpha_{28},\alpha_{6})$ = };
\node[label=below:$p \cdot P$, label=above:{\color{blue}$\alpha_{20}$}] (28_6_1)at (-4.3,-5.5) {};
\node[label=below:$p \cdot M$, label=above:{\color{blue}$\alpha_{21}$}] (28_6_2)at (-2.3,-5.5) {};


\draw[|-|] (27_5_1)--(27_5_2) node[sloped, above,pos=0.5, red]{$0$};

\draw[|-|] (28_6_1)--(28_6_2) node[sloped, above,pos=0.5, red]{$0$};

\draw[|-|] (27_7_1)--(27_7_2) node[sloped, above,pos=0.5, red]{$0$};
\end{tikzpicture}

We now proceed by cancelling the one-point $0$-dimensional moduli spaces using handle cancellation. Recall that in Definition \ref{cancelledcat}, moduli spaces $\mathcal{M}(a,b)$ in the original category become $\mathcal{M}(\bar{a},\bar{b}) = \mathcal{M}(a,b) \times \big( \mathcal{M}(x,b) \times \mathcal{M}(a,y) \big)$ in the cancelled category. So if $\mathcal{M}(\alpha_i,\alpha_j) = \ast$ denotes the pair being cancelled in $\Cat_1$, the first three pairs can be cancelled in the following order, where $\mathcal{M}(\alpha_i, \alpha_l) \times \mathcal{M}(\alpha_k, \alpha_j) = \emptyset$ so $\mathcal{M}(\bar{\alpha}_k, \bar{\alpha}_l) =  \mathcal{M}(\alpha_k,\alpha_l)$ for each $k,l \notin \{ i,j \}$ (see Definition \ref{cancelledcat}):
\begin{center}
\begin{enumerate}
\item $\mathcal{M}(\alpha_{11}, \alpha_4) = p$
\item $\mathcal{M}(\alpha_{25}, \alpha_{16}) =P$
\item $\mathcal{M}(\alpha_{26}, \alpha_{17}) = P$
\end{enumerate}
\end{center}
The flow category $\Cat_1$, obtained as a result of cancelling these three pairs via handle cancellation, is depicted in Figure \ref{page2}.
\begin{figure}[ht!]
	\includegraphics[scale=0.5]{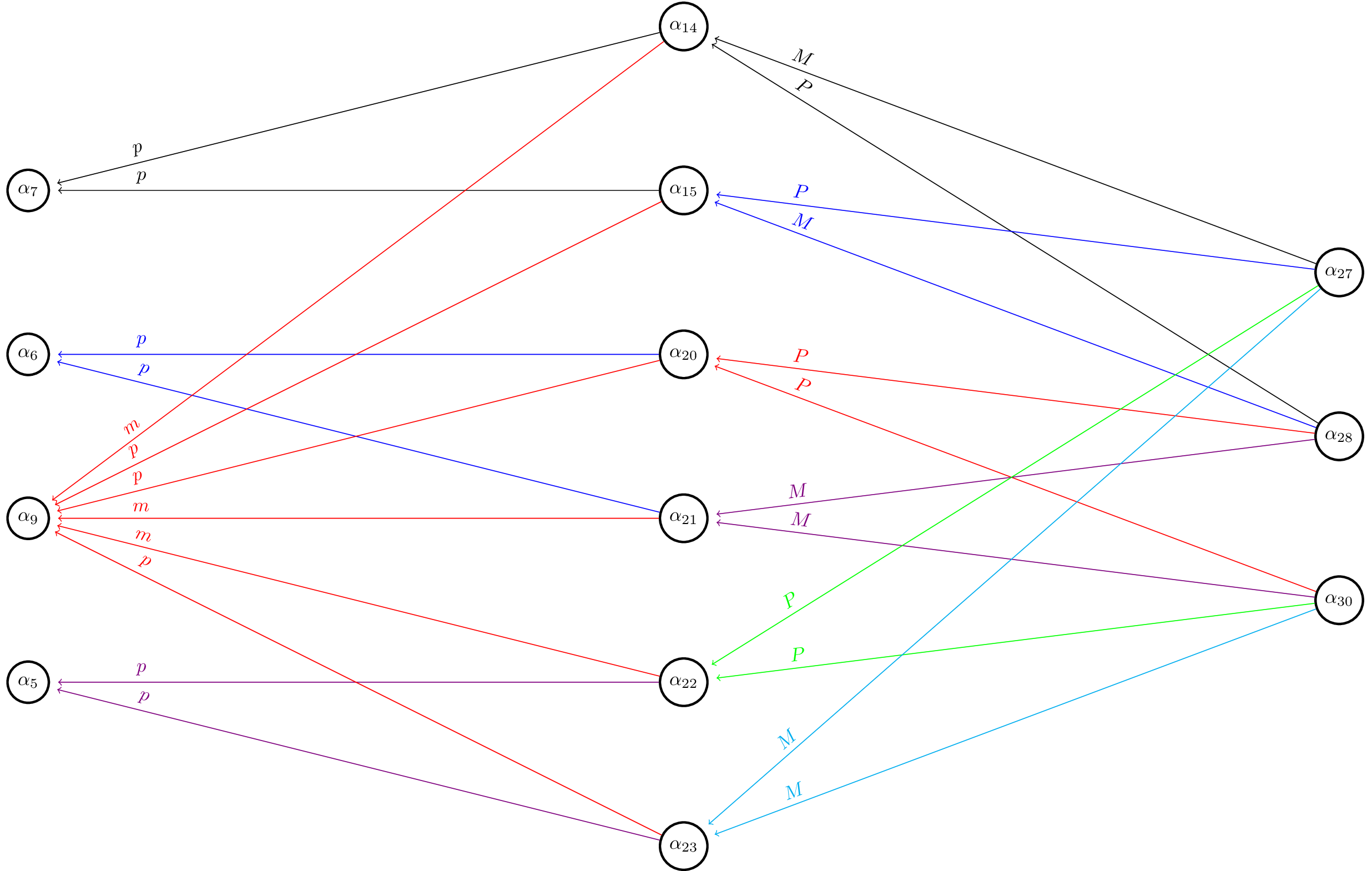}
	\caption{The flow category $\Cat_1$.}
	\label{page2}
\end{figure}
Note that there have been no changes in the $1$-dimensional moduli spaces during these cancellations.

Continuing the process of cancellation further, let us now cancel the moduli space $\mathcal{M}(\alpha_{14},\alpha_7) = p$ using handle cancellation. The only $0$-dimensional moduli space that changes is
\[
\mathcal{M}(\bar{\alpha}_{15},\bar{\alpha}_9) = \mathcal{M}({\alpha}_{15},{\alpha}_9) \sqcup \big( \mathcal{M}({\alpha}_{14},{\alpha}_9) \times \mathcal{M}({\alpha}_{15},{\alpha}_7) \big) = p \sqcup \tilde{p}
\]
where $\tilde{p}$ is used to distinguish the point from $p$. The resultant flow category $\Cat_2$ is depicted in Figure \ref{page3}.
\begin{figure}[ht!]
	\includegraphics[scale=0.5]{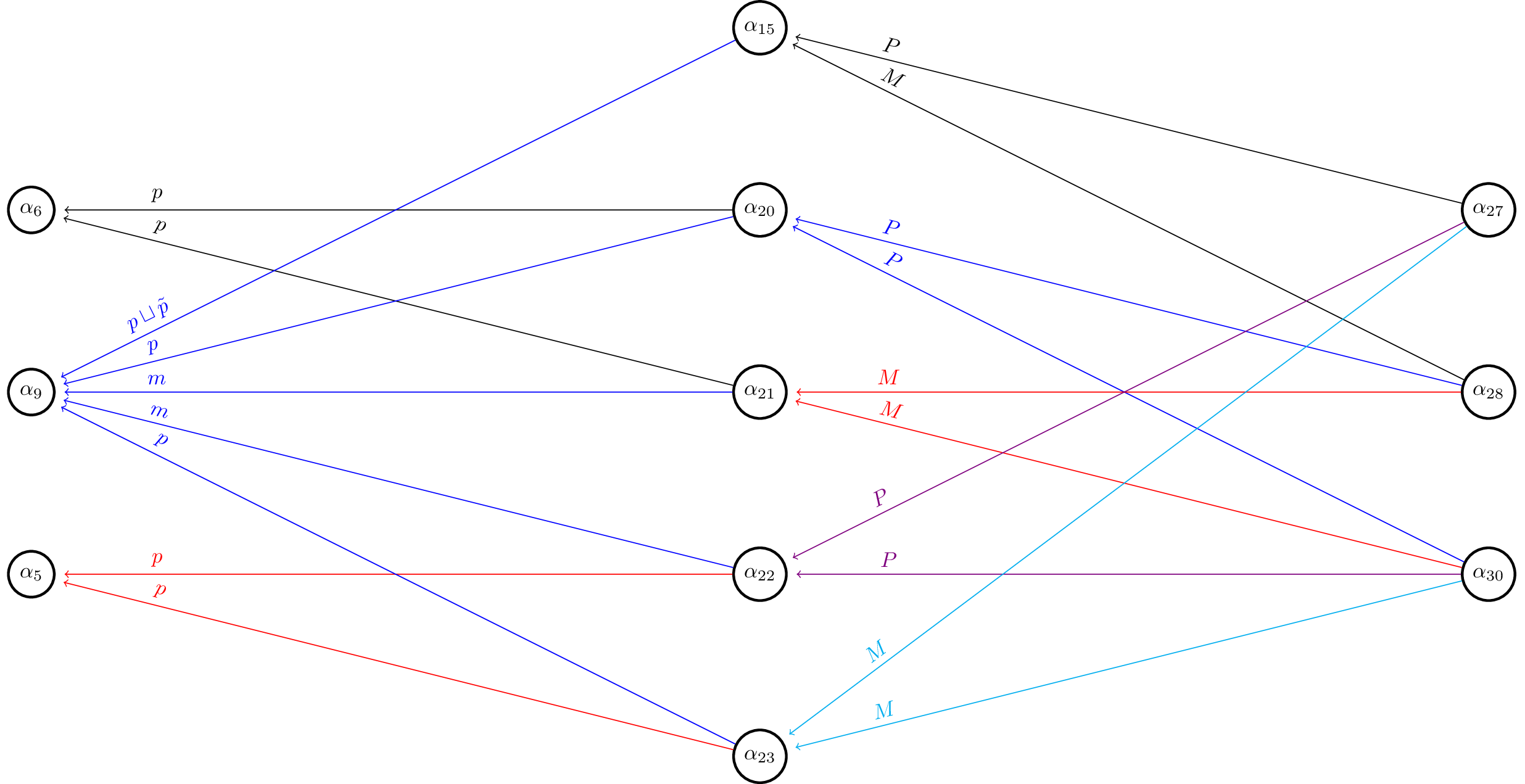}
	\caption{The flow category $\Cat_2$.}
	\label{page3}
\end{figure}
Two $1$-dimensional moduli spaces change as a result of this cancellation, namely $\mathcal{M}({\alpha}_{27},{\alpha}_9)$ and $\mathcal{M}({\alpha}_{28},{\alpha}_9)$. The corresponding $1$-dimensional moduli spaces for $\Cat_2$ are:
\begin{center}
\begin{tikzpicture}
\node[] at (-6,-10.5) {$\mathcal{M}(\bar{\alpha}_{27},\bar{\alpha}_{9})$ = };
\node[label=below:$\tilde{p} \cdot P$, label=above:{\color{blue}$\alpha_{15}$}] (27_9_1)at (-4,-10.5) {};
\node[label=below:$p \cdot M$, label=above:{\color{blue}$\alpha_{23}$}] (27_9_2)at (-2,-10.5) {};
\node[label=below:$p \cdot P$, label=above:{\color{blue}$\alpha_{15}$}] (27_9_3)at (0,-10.5) {};
\node[label=below:$m \cdot P$, label=above:{\color{blue}$\alpha_{22}$}] (27_9_4)at (2,-10.5) {};

\node[] at (-1,-10.5) {$\cup$};

\node[] at (-6,-12) {$\mathcal{M}(\bar{\alpha}_{28},\bar{\alpha}_{9})$ = };
\node[label=below:$\tilde{p} \cdot M$, label=above:{\color{blue}$\alpha_{15}$}] (28_9_1)at (-4,-12) {};
\node[label=below:$m \cdot M$, label=above:{\color{blue}$\alpha_{21}$}] (28_9_2)at (-2,-12) {};
\node[label=below:$p \cdot M$, label=above:{\color{blue}$\alpha_{15}$}] (28_9_3)at (0,-12) {};
\node[label=below:$p \cdot P$, label=above:{\color{blue}$\alpha_{20}$}] (28_9_4)at (2,-12) {};

\node[] at (-1,-12) {$\cup$};

\draw[|-|] (27_9_1)--(27_9_2) node[sloped, above,pos=0.5, red]{$0$};
\draw[|-|] (27_9_3)--(27_9_4) node[sloped, above,pos=0.5, red]{$0$};

\draw[|-|] (28_9_1)--(28_9_2) node[sloped, above,pos=0.5, red]{$1$};
\draw[|-|] (28_9_3)--(28_9_4) node[sloped, above,pos=0.5, red]{$0$};
\end{tikzpicture}
\end{center}
The framings of these new moduli spaces are calculated using (2) of Proposition \ref{prop:gluing_form_i} where $k=1$, $\varepsilon_\ast = \varepsilon_{p} = 0$ and $\varepsilon_{B_1} = \varepsilon_{m} = 1$ for both moduli spaces.

Working in the flow category $\Cat_2$, we shall now cancel $\mathcal{M}(\alpha_{20}, \alpha_6) = p$ followed by $\mathcal{M}(\alpha_{30}, \alpha_{23}) = M$. Firstly, cancelling $\mathcal{M}(\alpha_{20}, \alpha_6)$ results in the change of one $0$-dimensional moduli space, $\mathcal{M}(\alpha_{21}, \alpha_9)$, which becomes
\[
\mathcal{M}(\bar{\alpha}_{21},\bar{\alpha}_9) = \mathcal{M}({\alpha}_{21},{\alpha}_9) \sqcup \big( \mathcal{M}({\alpha}_{20},{\alpha}_9) \times \mathcal{M}({\alpha}_{21},{\alpha}_6) \big) = m \sqcup \tilde{m} {\rm .}
\]
Further, cancelling $\mathcal{M}(\alpha_{30}, \alpha_{23})$ results in the change of two $0$-dimensional moduli spaces, $\mathcal{M}(\alpha_{27}, \alpha_{22})$ and $\mathcal{M}(\alpha_{27}, \alpha_{21})$, which become
\begin{align*}
& \mathcal{M}(\bar{\alpha}_{27},\bar{\alpha}_{22}) = \mathcal{M}({\alpha}_{27},{\alpha}_{22}) \sqcup \big( \mathcal{M}({\alpha}_{30},{\alpha}_{22}) \times \mathcal{M}({\alpha}_{27},{\alpha}_{23}) \big) = P \sqcup M {\rm .} \\
& \mathcal{M}(\bar{\alpha}_{27},\bar{\alpha}_{21}) = \mathcal{M}({\alpha}_{27},{\alpha}_{21}) \sqcup \big( \mathcal{M}({\alpha}_{30},{\alpha}_{21}) \times \mathcal{M}({\alpha}_{27},{\alpha}_{23}) \big) = \emptyset \sqcup P = P {\rm .} 
\end{align*}
The flow category $\Cat_3$ obtained as a result of these two cancellations is depicted in Figure \ref{page4}.
\begin{figure}[ht!]
	\includegraphics[scale=0.5]{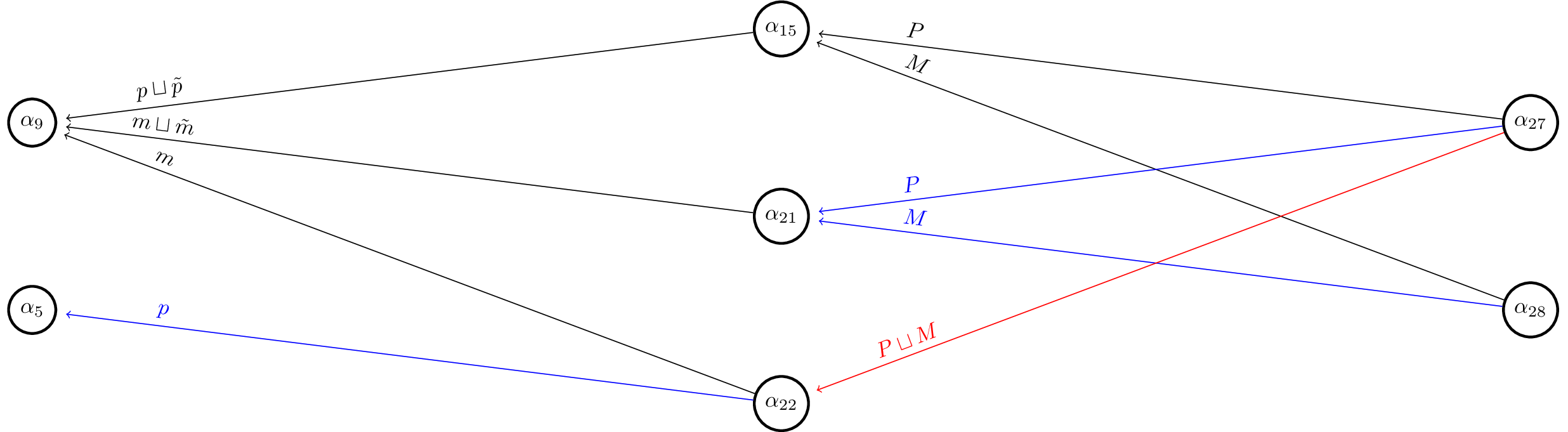}
	\caption{The flow category $\Cat_3$.}
	\label{page4}
\end{figure}
The $1$-dimensional moduli spaces of $\Cat_3$, obtained as alterations of previous moduli spaces, are:
\begin{center}
\begin{tikzpicture}
\node[] at (-6,-7.5) {$\mathcal{M}(\alpha_{27},\alpha_{5})$ = };
\node[label=below:$p \cdot P$, label=above:{\color{blue}$\alpha_{22}$}] (27_5_1)at (-4,-7.5) {};
\node[label=below:$p \cdot M$, label=above:{\color{blue}$\alpha_{22}$}] (27_5_2)at (-2.5,-7.5) {};

\node[] at (-6,-8.9) {$\mathcal{M}(\alpha_{27},\alpha_{9})$ = };
\node[label=below:$\tilde{p} \cdot P$, label=above:{\color{blue}$\alpha_{15}$}] (27_9_1)at (-4,-8.9) {};
\node[label=below:$\tilde{m} \cdot P$, label=above:{\color{blue}$\alpha_{21}$}] (27_9_2)at (-2.5,-8.9) {};
\node[label=below:$p \cdot P$, label=above:{\color{blue}$\alpha_{15}$}] (27_9_3)at (-1,-8.9) {};
\node[label=below:$m \cdot P$, label=above:{\color{blue}$\alpha_{22}$}] (27_9_4)at (0.5,-8.9) {};
\node[label=below:$m \cdot P$, label=above:{\color{blue}$\alpha_{21}$}] (27_9_5)at (2,-8.9) {};
\node[label=below:$m \cdot M$, label=above:{\color{blue}$\alpha_{22}$}] (27_9_6)at (3.5,-8.9) {};

\node[] at (-1.7,-8.9) {$\cup$};
\node[] at (1.3,-8.9) {$\cup$};

\draw[|-|] (27_5_1)--(27_5_2) node[sloped, above,pos=0.5, red]{$1$};

\draw[|-|] (27_9_1)--(27_9_2) node[sloped, above,pos=0.5, red]{$0$};
\draw[|-|] (27_9_3)--(27_9_4) node[sloped, above,pos=0.5, red]{$0$};
\draw[|-|] (27_9_5)--(27_9_6) node[sloped, above,pos=0.5, red]{$0$};

\node[] at (-6,-10.3) {$\mathcal{M}(\alpha_{28},\alpha_{9})$ = };
\node[label=below:$\tilde{p} \cdot M$, label=above:{\color{blue}$\alpha_{15}$}] (28_9_1)at (-4,-10.3) {};
\node[label=below:$m \cdot M$, label=above:{\color{blue}$\alpha_{21}$}] (28_9_2)at (-2.5,-10.3) {};
\node[label=below:$p \cdot M$, label=above:{\color{blue}$\alpha_{15}$}] (28_9_3)at (-1,-10.3) {};
\node[label=below:$\tilde{m} \cdot M$, label=above:{\color{blue}$\alpha_{21}$}] (28_9_4)at (0.5,-10.3) {};

\node[] at (-1.7,-10.3) {$\cup$};


\draw[|-|] (28_9_1)--(28_9_2) node[sloped, above,pos=0.5, red]{$1$};
\draw[|-|] (28_9_3)--(28_9_4) node[sloped, above,pos=0.5, red]{$1$};
\end{tikzpicture}
\end{center}
where the framings are calculated using a combination of (2) from Proposition \ref{prop:gluing_form_i} (for the cancellation of $\mathcal{M}(\alpha_{20}, \alpha_6)$) and (2) from Proposition \ref{prop:gluing_form_ii} (for the cancellation of $\mathcal{M}(\alpha_{30}, \alpha_{23})$).

Notice that there is now a moduli space which can be cancelled using the Whitney trick again; namely, $\mathcal{M}({\alpha}_{27},{\alpha}_{22}) = \{P \sqcup M \}$. 
Two $1$-dimensional moduli spaces are changed as a result of this cancellation; they become:
\begin{center}
\begin{tikzpicture}
\node[] at (-6,-7.5) {$\mathcal{M}(\alpha_{27},\alpha_{5})$ = };
\node[] at (-4.3,-7.5) {\color{red} 0};
\draw (-4.3,-7.5) circle (0.5cm);

\node[] at (-6,-9) {$\mathcal{M}(\alpha_{27},\alpha_{9})$ = };
\node[label=below:$\tilde{p} \cdot P$, label=above:{\color{blue}$\alpha_{15}$}] (27_9_1)at (-4,-9) {};
\node[label=below:$\tilde{m} \cdot P$, label=above:{\color{blue}$\alpha_{21}$}] (27_9_2)at (-2,-9) {};
\node[label=below:$p \cdot P$, label=above:{\color{blue}$\alpha_{15}$}] (27_9_3)at (0,-9) {};
\node[label=below:$m \cdot P$, label=above:{\color{blue}$\alpha_{21}$}] (27_9_4)at (2,-9) {};

\node[] at (-1,-9) {$\cup$};

\draw[|-|] (27_9_1)--(27_9_2) node[sloped, above,pos=0.5, red]{$0$};
\draw[|-|] (27_9_3)--(27_9_4) node[sloped, above,pos=0.5, red]{$0$};
\end{tikzpicture}
\end{center}
where the framings are calculated using Proposition \ref{prop:whitney_glue_formula_second}.

Since $\mathcal{M}(\alpha,\alpha_5) = \emptyset$ for all objects $\alpha$ different from $\alpha_{22}$, we can cancel $\mathcal{M}(\alpha_{22}, \alpha_5) = p$ with no effect on the other moduli spaces. At the same time, let us cancel the $0$-dimensional moduli space $\mathcal{M}(\alpha_{27}, \alpha_{15}) = P$. The latter cancellation changes a single $0$-dimensional moduli space, which is now
\[
\mathcal{M}(\bar{\alpha}_{28},\bar{\alpha}_{21}) = \mathcal{M}({\alpha}_{28},{\alpha}_{21}) \sqcup \big( \mathcal{M}({\alpha}_{27},{\alpha}_{21}) \times \mathcal{M}({\alpha}_{28},{\alpha}_{15}) \big) = M \sqcup P
\]
and the result is the flow category $\Cat_4$ depicted in Figure \ref{page5}.
\begin{figure}[ht!]
	\includegraphics[scale=0.5]{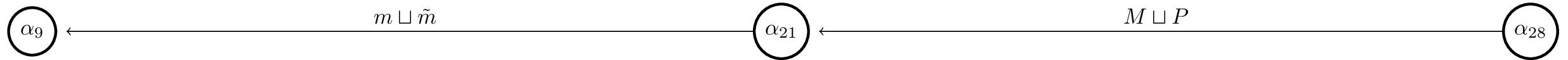}
	\caption{The flow category $\Cat_4$.}
	\label{page5}
\end{figure}
We see that the single $1$-dimensional moduli space of $\Cat_4$ consists of two intervals:
\begin{center}
\begin{tikzpicture}

\node[] at (-6,-13.5) {$\mathcal{M}(\alpha_{28},\alpha_{9})$ = };
\node[label=below:$m \cdot M$, label=above:{\color{blue}$\alpha_{21}$}] (28_9_1)at (-4,-13.5) {};
\node[label=below:$\tilde{m} \cdot P$, label=above:{\color{blue}$\alpha_{21}$}] (28_9_2)at (-2,-13.5) {};
\node[label=below:$m \cdot M$, label=above:{\color{blue}$\alpha_{21}$}] (28_9_3)at (0,-13.5) {};
\node[label=below:$\tilde{m} \cdot P$, label=above:{\color{blue}$\alpha_{21}$}] (28_9_4)at (2,-13.5) {};

\node[] at (-1,-13.5) {$\cup$};


\draw[|-|] (28_9_1)--(28_9_2) node[sloped, above,pos=0.5, red]{$1$};
\draw[|-|] (28_9_3)--(28_9_4) node[sloped, above,pos=0.5, red]{$1$};
\end{tikzpicture}
\end{center}
whose framings are calculated using (2) of Proposition \ref{prop:gluing_form_ii}.

The Whitney trick can be used on the moduli space $\mathcal{M}(\alpha_{28}, \alpha_{21}) = M \sqcup P$, resulting in the final flow category, $\Cat_{Fin}$.
The sole $1$-dimensional moduli space which remains is:
\begin{center}
\begin{tikzpicture}
\node[] at (-6,-13.5) {$\mathcal{M}(\alpha_{28},\alpha_{9})$ = };
\node[] at (-4.3,-13.5) {\color{red} 0};
\draw (-4.3,-13.5) circle (0.5cm);
\end{tikzpicture}
\end{center}
where the framing is calculated using Proposition \ref{prop:whitney_glue_formula_second}.

Now $|\cC_{Fin}|$ is a CW complex consisting of three cells and a basepoint.   In fact $\cC_{Fin}$ is the simplest framed flow category giving rise to $\Sigma^{-1} \RP^5 / \RP^2$, as discussed in Subsection \ref{subsec:chang_easy_examples}.  The Lipshitz-Sarkar stable homotopy type is therefore determined immediately as $\Sigma^{-1} \RP^5 / \RP^2$.

\subsection{A pretzel link}
\label{subsec:pretzel_link}
\input{subsec_pretzel_link.tex}

\subsection{The disjoint union of two trefoils}
\label{subsec:trefoils}
Let us consider the disjoint union of two (right-handed) trefoils, denoted $L$.
\begin{figure}[ht]
 \includegraphics[width=3cm,height=2cm]{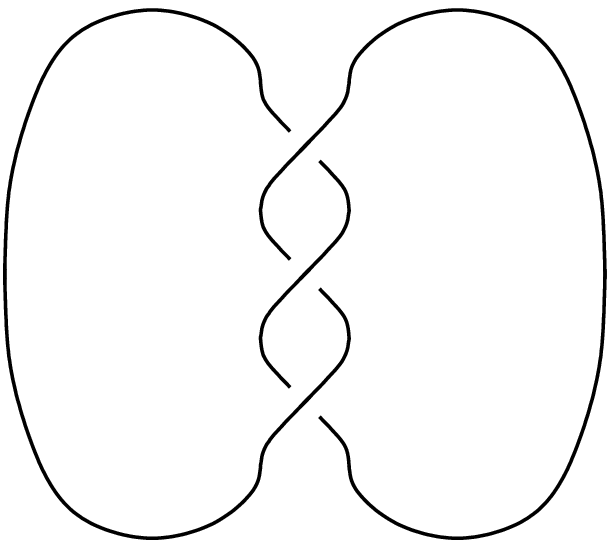}\hspace{0.3cm}\includegraphics[width=3cm,height=2cm]{fig_tref}
\caption{The disjoint union of two trefoil knots.}
\end{figure}

The Lipshitz-Sarkar stable homotopy type of this link is easily derived from that of the trefoil using \cite[Thm.1]{LawLipSar}. Also, the stable homotopy type of the trefoil $T$ is just a wedge of Moore spaces determined by the Khovanov cohomology, but it is also easily derived from the diagram using one elementary tangle of index 3 and \cite[Thm.1.2]{JLS}. In fact, the sock flow category is
\[
 \begin{array}{ccccccc}
  \includegraphics[width=1cm, height=0.6cm]{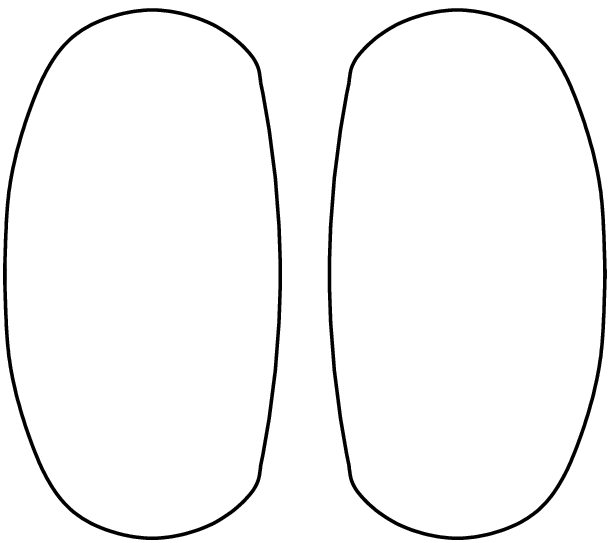} & \stackrel{+}{\longleftarrow} & \includegraphics[width=1cm, height=0.6cm]{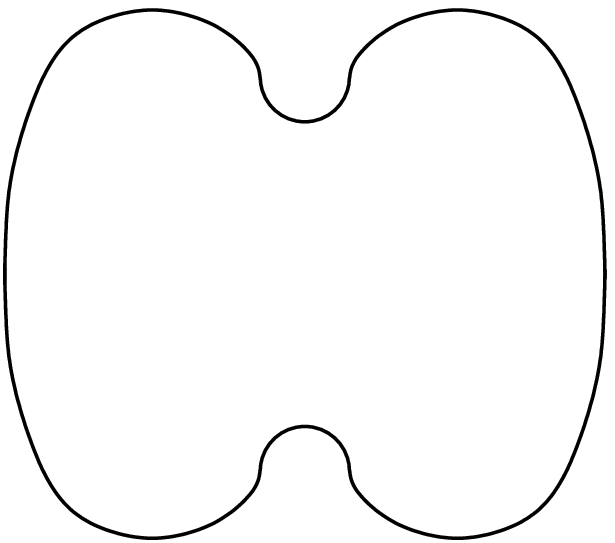} & \stackrel{+-}{\longleftarrow} & \includegraphics[width=1cm, height=0.6cm]{fig_tref1} & \stackrel{++}{\longleftarrow} & \includegraphics[width=1cm, height=0.6cm]{fig_tref1}\\
 0 & & 1 & & 2 & & 3
 \end{array}
\]
In quantum degree $q=7$ we only get two objects, the circle above $2$ with a $+$ label and the circle above $3$ with a $-$ label, and $2$ positively framed points between them. In particular,
\[
 \X^\Kh_7(T) \simeq M(\Z/2,2).
\]
By \cite[Thm.1]{LawLipSar} $\X^\Kh_{14}(L)$ contains $M(\Z/2,2)\wedge M(\Z/2,2)$ as a wedge summand, an elementary Chang complex (see \cite[\S 11]{baues}) and not decomposable into Moore spaces. Nevertheless, let us consider the flow category $\cL^\Kh_{14}(L_{(3,3)})$. There are in fact only $10$ objects, and the objects based at $(1,3)$ and $(3,1)$ can be cancelled with the objects based at $(0,3)$ and $(3,0)$. The remaining $6$ objects are depicted in Figure \ref{fig_flowcattrefs}.
\begin{figure}[ht]
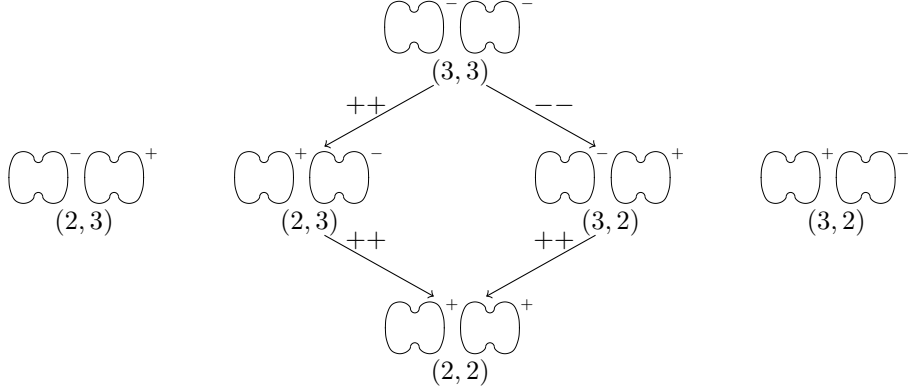

\begin{tikzpicture}
 \node at (0,0) {$\stackrel{\includegraphics[width=0.8cm]{fig_tref1}^-\includegraphics[width=0.8cm]{fig_tref1}^-}{(3,3)}$};
 \node at (-5,-2) {$\stackrel{\includegraphics[width=0.8cm]{fig_tref1}^-\includegraphics[width=0.8cm]{fig_tref1}^+}{(2,3)}$};
 \node at (-2,-2) {$\stackrel{\includegraphics[width=0.8cm]{fig_tref1}^+\includegraphics[width=0.8cm]{fig_tref1}^-}{(2,3)}$}; 
 \node at (5,-2) {$\stackrel{\includegraphics[width=0.8cm]{fig_tref1}^+\includegraphics[width=0.8cm]{fig_tref1}^-}{(3,2)}$};
 \node at (2,-2) {$\stackrel{\includegraphics[width=0.8cm]{fig_tref1}^-\includegraphics[width=0.8cm]{fig_tref1}^+}{(3,2)}$};
 \node at (0,-4) {$\stackrel{\includegraphics[width=0.8cm]{fig_tref1}^+\includegraphics[width=0.8cm]{fig_tref1}^+}{(2,2)}$};

 \draw [shorten >=0.8cm,shorten <=0.4cm,->] (0,-0.4) -- node [above] {$++$} (-2.5,-1.8);
 \draw [shorten >=0.8cm,shorten <=0.4cm,->] (0,-0.4) -- node [above] {$--$} (2.5,-1.8);
 \draw [shorten >=0.8cm,shorten <=0.4cm,<-] (0,-3.6) -- node [above] {$++$} (-2.5,-2.2);
 \draw [shorten >=0.8cm,shorten <=0.4cm,<-] (0,-3.6) -- node [above] {$++$} (2.5,-2.2);
\end{tikzpicture}
\caption{The flow category $\cL^\Kh_{14}(L_{(3,3)})$ after the cancellation of $4$ objects.}
\label{fig_flowcattrefs}
\end{figure}

Notice that the outside objects in the middle row are isolated from all other objects. This means they contribute two copies of $S^5$ to the stable homotopy type. To describe the $1$-dimensional moduli spaces, let us introduce more notation. We call the top object $\gamma$, the bottom object $\alpha$ the middle objects $\beta_1$ and $\beta_2$ where $\beta_1$ is based at $(2,3)$ while $\beta_2$ is based at $(3,2)$. Write 
\begin{align*}
\M(\gamma,\beta_1) &= \{P_0,P_1\} &  \M(\gamma,\beta_2) = \{\tilde{P}_0,\tilde{P}_1\} \\
\M(\beta_1,\alpha) &= \{p_0,p_1\} &  \M(\beta_2,\alpha) = \{\tilde{p}_0,\tilde{p}_1\}.
\end{align*}
Following \cite[\S 5.2]{JLS}, we get four intervals in $\M(\gamma,\alpha)$ as follows:

\begin{tikzpicture}
\node[label=below:$p_0\,P_0$, label=above:{\color{blue}$\beta_1$}] (5_5_1)at (0,0) {};
\node[label=below:$\tilde{p}_0\,\tilde{P}_0$, label=above:{\color{blue}$\beta_2$}] (5_5_2)at (1.6,0) {};

\draw[|-|] (5_5_1)--(5_5_2);

\node[label=below:$p_0\,P_1$, label=above:{\color{blue}$\beta_1$}] (5_5_3)at (3.3,0) {};
\node[label=below:$\tilde{p}_1\,\tilde{P}_0$, label=above:{\color{blue}$\beta_2$}] (5_5_4)at (4.9,0) {};

\draw[|-|] (5_5_3)--(5_5_4);

\node[label=below:$p_1\,P_0$, label=above:{\color{blue}$\beta_1$}] (5_5_5)at (6.6,0) {};
\node[label=below:$\tilde{p}_0\,\tilde{P}_1$, label=above:{\color{blue}$\beta_2$}] (5_5_6)at (8.2,0) {};

\draw[|-|] (5_5_5)--(5_5_6);

\node[label=below:$p_1\,P_1$, label=above:{\color{blue}$\beta_1$}] (5_5_7)at (9.9,0) {};
\node[label=below:$\tilde{p}_1\,\tilde{P}_1$, label=above:{\color{blue}$\beta_2$}] (5_5_8)at (11.5,0) {};

\draw[|-|] (5_5_7)--(5_5_8);
\end{tikzpicture}

Since $\alpha$ is based at $(2,2)$ and all four intervals correspond to a $(P,P)$-cell in the obstruction complex (see \cite[\S 5.2]{JLS}), all intervals are framed $0$. We cannot perform a Whitney trick or a handle cancellation directly, but consider the flow category $\Cat$ depicted in Figure \ref{fig_supcat}.

\begin{figure}[ht]
 \begin{tikzpicture}[scale =1,
roundnode/.style={circle, draw=black!, very thick}
]
\draw [very thick] (-3,0) circle [radius=0.3];
\draw [very thick] (3,0) circle [radius=0.3];
\draw [very thick] (-3,-2) circle [radius=0.3];
\draw [very thick] (0,-2) circle [radius=0.3];
\draw [very thick] (3,-2) circle [radius=0.3];
\draw [very thick] (0,-4) circle [radius=0.3];

\node at (-3,0) {$\gamma$};
\node at (3,0) {$\tau$};
\node at (-3,-2) {$\beta_1$};
\node at (0,-2) {$\beta_2$};
\node at (3,-2) {$\sigma$};
\node at (0,-4) {$\alpha$};

\draw [->, shorten >=0.4cm, shorten <=0.4cm] (-3,0)--(-3,-2) node[sloped,below,pos=0.4]{$P_0\,P_1$};
\draw [->, shorten >=0.4cm, shorten <=0.4cm] (-3,0)--(0,-2) node[sloped,above,pos=0.3]{$\tilde{P}_0\,\tilde{P}_1$};
\draw [->, shorten >=0.4cm, shorten <=0.4cm, gray] (3,0)--(-3,-2) node[sloped,above,pos=0.3]{$-$};
\draw [->, shorten >=0.4cm, shorten <=0.4cm, gray] (3,0)--(0,-2) node[sloped,above,pos=0.7]{$+$};
\draw [->, shorten >=0.4cm, shorten <=0.4cm, gray] (3,0)--(3,-2) node[sloped,above,pos=0.3]{$+$};
\draw [->, shorten >=0.4cm, shorten <=0.4cm] (-3,-2)--(0,-4) node[sloped,below,pos=0.4]{$p_0\,p_1$};
\draw [->, shorten >=0.4cm, shorten <=0.4cm] (0,-2)--(0,-4) node[sloped,above,pos=0.5]{$\tilde{p}_0\,\tilde{p}_1$};
\end{tikzpicture}
\caption{The flow category $\Cat$.}
\label{fig_supcat}
\end{figure}
The moduli space $\M(\gamma,\alpha)$ is as in $\cL^\Kh_{14}(L_{(3,3)})$, and $\M(\tau,\alpha)$ is given by two intervals
\[
\begin{tikzpicture}
\node[] at (-1.5,0) {$\mathcal{M}(\tau,\alpha) =$};
\node[label=below:$p_0\,-$, label=above:{\color{blue}$\beta_1$}] (5_5_1)at (0,0) {};
\node[label=below:$\tilde{p}_1\,+$, label=above:{\color{blue}$\beta_2$}] (5_5_2)at (1.6,0) {};
\node[label=below:$p_1\,-$, label=above:{\color{blue}$\beta_1$}] (5_5_3)at (3.3,0) {};
\node[label=below:$\tilde{p}_0\,+$, label=above:{\color{blue}$\beta_2$}] (5_5_4)at (4.9,0) {};

\node[] at (2.4,0) {$\cup$};

\draw[|-|] (5_5_3)--(5_5_4) node[sloped, above,pos=0.5, red]{$\varepsilon_2$};
\draw[|-|] (5_5_1)--(5_5_2) node[sloped, above,pos=0.5, red]{$\varepsilon_1$};
\end{tikzpicture}
\]
for some $\varepsilon_1,\varepsilon_2\in \Z/2$. 

If we cancel the pair $\tau,\sigma$ we obtain the relevant part of the flow category $\cL^\Kh_{14}(L)$. But if we cancel the pair $\tau,\beta_2$ we get the flow category $\overline{\Cat}$ given in Figure \ref{fig_supcatcan}.

\begin{figure}[ht]
 \begin{tikzpicture}[scale =1,
roundnode/.style={circle, draw=black!, very thick}
]
\draw [very thick] (0,0) circle [radius=0.3];
\draw [very thick] (-3,-2) circle [radius=0.3];
\draw [very thick] (3,-2) circle [radius=0.3];
\draw [very thick] (0,-4) circle [radius=0.3];

\node at (0,0) {$\bar{\gamma}$};
\node at (-3,-2) {$\bar{\beta_1}$};
\node at (3,-2) {$\bar{\sigma}$};
\node at (0,-4) {$\bar{\alpha}$};

\draw [->, shorten >=0.4cm, shorten <=0.4cm] (0,0)--(-3,-2) node[sloped,above,pos=0.5]{$P_0\,P_1\, M_0\,M_1$};
\draw [->, shorten >=0.4cm, shorten <=0.4cm] (0,0)--(3,-2) node[sloped,above,pos=0.3]{$\hat{P}_0\,\hat{P}_1$};
\draw [->, shorten >=0.4cm, shorten <=0.4cm] (-3,-2)--(0,-4) node[sloped,below,pos=0.4]{$p_0\,p_1$};
\end{tikzpicture}
\caption{The flow category $\overline{\Cat}$.}
\label{fig_supcatcan}
\end{figure}

We denote the new points in $\M(\bar{\gamma},\bar{\beta_1})$ by $M_0 = (-,\tilde{P}_0)$ and $M_1=(-,\tilde{P}_1)$, and both are framed negatively.
Using Proposition \ref{prop:gluing_form_ii}(5) we get that $\M(\bar{\gamma},\bar{\alpha})$ still consists of $4$ intervals given by

\begin{tikzpicture}
\node[label=below:$p_0\,P_0$, label=above:{\color{blue}$\beta_1$}] (5_5_1)at (0,0) {};
\node[label=below:$p_1\,M_0$, label=above:{\color{blue}$\beta_1$}] (5_5_2)at (1.6,0) {};

\draw[|-|] (5_5_1)--(5_5_2) node[sloped, above,pos=0.5, red]{$\varepsilon_2$};

\node[label=below:$p_0\,P_1$, label=above:{\color{blue}$\beta_1$}] (5_5_3)at (3.3,0) {};
\node[label=below:$p_0\,M_0$, label=above:{\color{blue}$\beta_1$}] (5_5_4)at (4.9,0) {};

\draw[|-|] (5_5_3)--(5_5_4) node[sloped, above,pos=0.5, red]{$\varepsilon_1$};

\node[label=below:$p_1\,P_0$, label=above:{\color{blue}$\beta_1$}] (5_5_5)at (6.6,0) {};
\node[label=below:$p_1\,M_1$, label=above:{\color{blue}$\beta_1$}] (5_5_6)at (8.2,0) {};

\draw[|-|] (5_5_5)--(5_5_6) node[sloped, above,pos=0.5, red]{$\varepsilon_2$};

\node[label=below:$p_1\,P_1$, label=above:{\color{blue}$\beta_1$}] (5_5_7)at (9.9,0) {};
\node[label=below:$p_0\,M_1$, label=above:{\color{blue}$\beta_1$}] (5_5_8)at (11.5,0) {};

\draw[|-|] (5_5_7)--(5_5_8) node[sloped, above,pos=0.5, red]{$\varepsilon_1$};
\end{tikzpicture}

After performing the Whitney trick with $P_0,M_0$ and using Proposition \ref{prop:whitney_glue_formula_second} the $1$-dimensional moduli space changes to
\[
\begin{tikzpicture}
\node[label=below:$p_0\,P_1$, label=above:{\color{blue}$\beta_1$}] (5_5_1)at (0,0) {};
\node[label=below:$p_0$] (5_5_2)at (1.6,0) {};
\node[label=below:$p_1$] (5_5_3)at (3.2,0) {};
\node[label=below:$p_1\,M_1$, label=above:{\color{blue}$\beta_1$}] (5_5_6)at (4.8,0) {};
\node[label=below:$p_1\,P_1$, label=above:{\color{blue}$\beta_1$}] (5_5_7)at (6.4,0) {};
\node[label=below:$p_0\,M_1$, label=above:{\color{blue}$\beta_1$}] (5_5_8)at (8,0) {};

\draw[|-|] (5_5_1)--(1.6,0);
\draw[|-|] (1.6,0)--(3.2,0) node[sloped, above,pos=0.5, red]{$\varepsilon_2+\varepsilon_1+\varepsilon_2$};
\draw[|-|] (3.2,0)--(5_5_6);
\draw[|-|] (5_5_7)--(5_5_8) node[sloped, above,pos=0.5, red]{$\varepsilon_1$};
\end{tikzpicture}
\]
and after the Whitney trick with $P_1,M_1$ we get a circle with label $\varepsilon_1+\varepsilon_2+\varepsilon_1+\varepsilon_2=0$ in $\M(\bar{\gamma},\bar{\alpha})$. So the cell of $\gamma$ is attached to the cell of $\alpha$ in a non-trivial way, leading to
\[
 \X^\Kh_{14}(L) \simeq \Sigma^2 (\RP^2\wedge \RP^2) \vee S^5 \vee S^5 {\rm .}
\]

\begin{remark}
 It is possible to get more directly from $\cL^\Kh_{14}(L_{(3,3)})$ to the last flow category using a \emph{handle slide}.  In particular, we would not have to increase the number of objects to identify the flow category as a standard Chang complex in the sense of \cite[\S 11]{baues}.  We shall give the construction of a handle slide on a framed flow category in an upcoming paper.
\end{remark}

\bibliographystyle{myamsalpha}
\bibliography{master_morse}
\end{document}